\newlength{\somenewlength}
\def\mathpalsave#1{\let\wasmathstyle=#1\relax}
\newcommand{\kla}[1]{ {\langle #1 \rangle} }
\newcommand{\anf}[1]{{\text{``}{#1}\text{''}}}
\newcommand{\st}{\;|\;}
\newcommand{\ran}{ {\rm ran} }
\newcommand{\otp}{ {\rm otp} }
\newcommand{\sub}{\subseteq}
\newfont{\ssi}{cmssi12 at 12pt}
\newcommand{\eins}{ {1{\rm\hspace{-0.5ex}l}} }
\newcommand{\cf}{ {\rm cf} }
\newcommand{\On}{ {\rm On} }
\newcommand{\Card}{{\rm Card}}
\newcommand{\leer}{\emptyset}
\newcommand{\ohne}{\setminus}
\newenvironment{ea*}{\begin{eqnarray*}}{\end{eqnarray*}}
\newcommand{\claim}[2]{
     \begin{enumerate}
       \item[{#1}] {\em #2}
     \end{enumerate}}
\newcommand{\To}{\longrightarrow}
\newcommand{\power}{{\mathcal{P}}}
\newcommand{\calC}{\mathcal{C}}
\newcommand{\calT}{\mathcal{T}}
\newcommand{\bI}{{\bar{I}}}
\newcommand{\bX}{{\bar{X}}}
\newcommand{\bgamma}{{\bar{\gamma}}}
\newcommand{\btau}{{\bar{\tau}}}
\newcommand{\btheta}{{\bar{\theta}}}
\newcommand{\bN}{{\bar{N}}}
\newcommand{\tA}{{\tilde{A}}}
\newcommand{\va}{{\vec{a}}}
\newcommand{\vx}{{\vec{x}}}
\newcommand{\seq}[2]{{\langle#1\;|\;}\linebreak[0]{#2\rangle}}
\renewcommand{\phi}{\varphi}
\newcommand{\card}[1]{\overline{\overline{#1}}}
\newcommand{\ZFC}{\ensuremath{\mathsf{ZFC}}\xspace}
\newcommand{\ZF}{\ensuremath{\mathsf{ZF}}}
\newcommand{\ZFCm}{\ensuremath{{\ZFC}^-}}
\newcommand{\SCH}{\ensuremath{\mathsf{SCH}}}
\newcommand{\V}{\ensuremath{\mathrm{V}}}
\newcommand{\forces}{\Vdash}
\def\<#1>{\langle#1\rangle}
\newcommand{\B}{{\mathord{\mathbb{B}}}}
\renewcommand{\P}{{\mathord{\mathbb P}}}
\newcommand{\Q}{{\mathord{\mathbb Q}}}
\newcommand{\N}{\mathord{\mathbb{N}}}
\newcommand{\MM}{\ensuremath{\mathsf{MM}}\xspace}
\newcommand{\PFA}{\ensuremath{\mathsf{PFA}}\xspace}
\newcommand{\SCFA}{\ensuremath{\mathsf{SCFA}}\xspace}
\newcommand{\BFA}{\ensuremath{\mathsf{BFA}}}
\newcommand{\BPFA}{\ensuremath{\mathsf{BPFA}}}
\newcommand{\BMM}{\ensuremath{\mathsf{BMM}}}
\newcommand{\MP}{\ensuremath{\mathsf{MP}}}
\newcommand{\lt}{{<}}
\newcommand{\closed}[1]{\ensuremath{\lt#1}\text{\rm-closed}}
\newcommand{\directed}[1]{\ensuremath{\lt#1}\text{\rm-directed-closed}}
\newcommand{\ColNothing}{\mathrm{Col}}
\newcommand{\Col}[1]{\ColNothing(#1)}
\newcommand{\MPSC}{\ensuremath{\MP_{\mathsf{SC}}}}
\newcommand{\MPColNothing}[1]{\MP_{\Col{\dot{\kappa}}}}
\newcommand{\bfMPsigmaclosed}{\MP_{\sigma\text{-closed}}(H_{\omega_2})}
\newcommand{\MPsigmaclosed}{\MP_{\sigma\text{-closed}}}
\newcommand{\bfMPSC}{\ensuremath{\MPSC(H_{\omega_2})}}
\newcommand{\isomorphic}{\cong}
\newcommand{\GCH}{\ensuremath{\mathsf{GCH}}\xspace}
\newcommand{\CH}{\ensuremath{\mathsf{CH}}\xspace}
\newcommand{\HOD}{\ensuremath{\mathsf{HOD}}}
\newcommand{\reals}{{\mathord{\mathbb{R}}}}
\newcommand{\Add}{\mathord{\mathrm{Add}}}
\newcommand{\Mantle}{\ensuremath{\mathbb{M}}}
\newcommand{\NS}{\mathsf{NS}}
\newcommand{\CC}{{\ensuremath{\text{$\sigma$-{\rm closed}}}}}
\newcommand{\BSCFA}{\ensuremath{\mathsf{BSCFA}}\xspace}
\newcommand{\Prikry}{P\v{r}\'{\i}kr\'{y}\xspace}
\newcommand{\Todorcevic}{Todor\v{c}evi\'{c}\xspace}
\newcommand{\Velickovic}{Veli\v{c}kovi\'{c}\xspace}
\newcommand{\HSC}{H_{\omega_2}}
\newcommand{\ccc}{\mathsf{ccc}}
\newtheorem{thm}{Theorem}[section]
\newtheorem*{thm*}{Theorem} 
\newtheorem{cor}[thm]{Corollary}
\newtheorem{lem}[thm]{Lemma}
\newtheorem{obs}[thm]{Observation}
\newtheorem{fact}[thm]{Fact}
\theoremstyle{definition}
\newtheorem{defn}[thm]{Definition}
\theoremstyle{remark}
\newtheorem{remark}[thm]{Remark}
\newtheorem{example}[thm]{Example}
\begin{document}

\title{Subcomplete forcing principles and definable well-orders}
\author{Gunter Fuchs}
\address{Department of Mathematics\\The College of Staten Island (CUNY)\\2800 Victory Boulevard\\Staten Island, NY 10314}
\address{and}
\address{The CUNY Graduate Center\\365 5th Avenue\\New York, NY 10016}
\email{gunter.fuchs@csi.cuny.edu}
\urladdr{www.math.csi.cuny.edu/~fuchs}
\thanks{The research for this paper was supported in part by PSC CUNY research grant 60630-00 48.}
\subjclass[2010]{03E25,03E35,03E40,03E45,03E50,03E55,03E57}
\date{\today}
\maketitle

\newcommand{\bbC}{\mathbb{C}}
\newcommand{\Cn}{\ensuremath{C^{(n)}}}

\begin{abstract}
It is shown that the boldface maximality principle for subcomplete forcing, \bfMPSC, together with the assumption that the universe has only set-many grounds, implies the existence of a well-ordering of $\power(\omega_1)$ definable without parameters. The same conclusion follows from $\bfMPSC$, assuming there is no inner model with an inaccessible limit of measurable cardinals. Similarly, the bounded subcomplete forcing axiom, together with the assumption that $x^\#$ does not exist, for some $x\sub\omega$, implies the existence of a well-ordering of $\power(\omega_1)$ which is $\Delta_1$-definable without parameters, and $\Delta_1(H_{\omega_2})$-definable using a subset of $\omega_1$ as a parameter. This well-order is in $L(\power(\omega_1))$. Enhanced version of bounded forcing axioms are introduced that are strong enough to have the implications of $\bfMPSC$ mentioned above.
\end{abstract}

\section{Introduction}

This article is part of a larger project the theme of which is a comparison between the effects of subcomplete forcing principles and those for other, more familiar classes of forcing, such as proper, semiproper or stationary set preserving forcing notions.

Subcomplete forcing was introduced by Jensen \cite{Jensen:SPSCF}, see also \cite{Jensen2014:SubcompleteAndLForcingSingapore} for an overview article. Subcomplete forcing does not add reals, and the main result of \cite{Jensen:SPSCF} is that it can be iterated with revised countable support. It is pointed out in the same paper that every countably closed forcing is subcomplete. On the other hand, no nontrivial ccc forcing is subcomplete, see \cite{Kaethe:Diss}. Subcomplete forcing can change the cofinality of a regular cardinal to be countable; for example, assuming the continuum hypothesis, Namba forcing is subcomplete, and \Prikry{} forcing is subcomplete as well (these results can be found in Jensen \cite{Jensen2014:SubcompleteAndLForcingSingapore}), and the Magidor forcing to collapse the cofinality of a measurable cardinal of Mitchell order $\omega_1$ to $\omega_1$ is subcomplete (see Fuchs \cite{Fuchs:SubcompletenessOfMagidorForcing}). So there are subcomplete forcing notions that are not proper, and vice versa. Every subcomplete forcing preserves stationary subsets of $\omega_1$.

Jensen showed in \cite{Jensen:FAandCH} that one can force the subcomplete forcing axiom (\SCFA), that is, Martin's axiom for subcomplete forcing, over a model with a supercompact cardinal, in much the same way that one can force the proper forcing axiom, \PFA, under the same assumption. While \SCFA{} does not imply the continuum hypothesis, as Martin's Maximum implies $\SCFA+2^\omega=\omega_2$, the natural model resulting from a Baumgartner style iteration of subcomplete forcing notions will satisfy \SCFA, together with \CH, and even Jensen's combinatorial principle $\diamondsuit$, because subcomplete forcing does not add reals and preserves $\diamondsuit$. Being consistent with the continuum hypothesis makes \SCFA{} stand out.

It is now interesting to compare the consequences of \SCFA{} to those of \PFA{} and even \MM. Jensen showed that \SCFA{} implies \SCH{} and the failure of $\square_\tau$, for all uncountable cardinals $\tau$. In \cite{Fuchs:HierarchiesOfForcingAxioms}, I began a more detailed analysis of the effects of \SCFA{} (and its bounded versions) on the failure of \emph{weak} square principles, and in joint work with Rinot \cite{FuchsRinot:WeakSquareStationaryReflection}, this analysis was completed. It turned out that they are extremely close to the effects of \MM, and the only difference seems to be attributable to the fact that \SCFA{} is consistent with \CH, while \MM is not. In the paper \cite{Fuchs:DiagonalReflection}, I determined the effects of \SCFA{} on the failure of weak variants of the \Todorcevic{} square principles, and again, the situation turned out to be very similar to that of \MM{} - the only difference stemming from the fact that \SCFA{} does not imply the failure of $\CH$. Other previous research focused on forcing principles for subcomplete forcing other than the traditional axioms, such as resurrection axioms (see \cite{Fuchs:HierarchiesOfRA}) or maximality principles (see \cite{Kaethe:Diss}).

In the present paper, I explore another instance of the somewhat surprising phenomenon that despite the substantial difference between proper and subcomplete forcing, their forcing principles nevertheless have very similar effects on the set theoretic universe.

Namely, I investigate situations in which forcing principles for subcomplete forcing imply the existence of definable well-orderings of $\power(\omega_1)$. There is a history of results on the existence of definable well-orderings of $\reals$, and even of $\power(\omega_1)$, as a consequence of forcing axioms. Results include (chronologically): $\MM\implies 2^\omega=\omega_2$ (Foreman-Magidor-Shelah, \cite{FMS:MM1}), $\PFA\implies 2^\omega=\omega_2$ (\Velickovic{} \cite{Velickovic:ForcingAxiomsAndStationarySets}, \Todorcevic{} \cite{Bekkali:TopicsInST}), $\MM\implies$ there is a well-ordering of $\reals$ definable (with parameters) in $\kla{H_{\omega_2},\in}$ (Woodin \cite{Woodin:ADforcingAxiomsNonstationaryIdeal}), $\BMM$ implies there is a well-ordering of $\reals$ definable (with parameters) in $\kla{H_{\omega_2},\in}$ (\Todorcevic \cite{Todorcevic:GenericAbsolutenessAndContinuum}), $\BPFA$ implies there is a well-ordering of $\power(\omega_1)$ $\Delta_2$-definable (with parameters) in $\kla{H_{\omega_2},\in}$ (Moore \cite{Moore:SMR}),
$\BPFA$ implies there is a well-ordering of $\power(\omega_1)$ $\Delta_1$-definable (with parameters) in $\kla{H_{\omega_2},\in}$ (Caicedo-Velickovic \cite{CaicedoVelickovic:BPFAandWOofR}).

By completely different methods, I will show that certain subcomplete forcing principles have similar effects on the existence of definable well-orders of $\power(\omega_1)$, under appropriate additional assumptions.
In Section \ref{sec:MPSC}, the forcing principle under consideration is the boldface maximality principle for subcomplete forcing, $\bfMPSC$. This is the scheme expressing that every statement about an element of $H_{\omega_2}$ that can be forced to be true by a subcomplete forcing in such a way that it will remain in every further forcing extension by a subcomplete forcing is already true in $\V$. The additional assumption I use in this section is that the universe has only set-many inner models (grounds) of which it is a set-forcing extension. This is maybe an unexpected appearance of an assumption on the set-theoretic geology (see \cite{FuchsHamkinsReitz:Geology}) of the ambient universe, but it guarantees that the mantle $\Mantle$, the intersection of all grounds, is itself a ground model of the universe, hence is very ``close to $\V$''. It is known that the mantle is invariant under set forcing, and hence, the assumption of set many grounds provides us with a forcing invariant inner model that's close to $\V$. By work of Usuba (\cite{Usuba:DDGandVeryLargeCardinals}), the assumption that there are only set many grounds follows from the existence of a rather strong large cardinal, called hyper huge. I recap the basics of set-theoretic geology and maximality principles in more detail in this section, and then prove the main result, that if \MPSC{} holds and there are only set-many grounds, then there is a well-order of $\power(\omega_1)$, of order-type $\omega_2$, definable \emph{without parameters.} This is Theorem \ref{thm:WOofPomega1FrombfMPSCandSetManyGrounds}. A corollary of this theorem is that if $\bfMPSC$ holds and there are only set many grounds, and if a forcing notion $\P$ preserves $\bfMPSC$ and $\omega_2$, then $\P$ cannot add subsets of $\omega_1$. This is Corollary \ref{cor:PreservingbfMPSC}. The same conclusions can be made assuming \bfMPSC{} and the absence of an inner model with an inaccessible limit of measurable cardinals, see Corollary \ref{cor:bfMPSC+NoIMwithAnIAlimitOfMeasurablesImpliesWO}.

It is easy to see that $\bfMPSC$ is a a strengthening of the version of the bounded forcing axiom for subcomplete forcing, \BSCFA. In Section \ref{sec:BSCFA}, I deal with this latter principle, and show that in the absence of $0^\#$, or just of $x^\#$, for some $x\sub\omega_1$, \BSCFA{} implies the existence of a well-order of $\power(\omega_1)$ of order type $\omega_2$, definable in $\kla{H_{\omega_2},\in}$ in a $\Delta_1$ way, using a subset of $\omega_1$ as a parameter, and this well-order is in $L(\power(\omega_1))$ (which thus is a model of $\ZFC$). This is Lemma \ref{lem:BSCFA+NoZeroSharpImpliesWOofOmega1}. A similar conclusion on the preservation of \BSCFA{} (in the absence of $0^\#$) under forcing is made in Lemma \ref{lem:NonPreservationOfBSCFA}: if $\P$ preserves $\omega_2$ and $\BSCFA$, then it cannot add a subset of $\omega_1$.

In Section \ref{sec:MoreReflection}, I make an excursion on ways to strengthen the bounded forcing axiom for different forcing classes. The motivation for doing this is that I want to find principles located between \BSCFA and $\bfMPSC$ that are still strong enough to yield the results of Section \ref{sec:MPSC}. The results in this section are not needed for the following section, but are of independent interest. I argue that the ``correct'' version of the bounded forcing axiom for countably closed forcing notions should be the statement that whenever $G$ is generic for a countably closed forcing notion, then $H_{\omega_2}\prec_{\Sigma_2}H_{\omega_2}^{\V[G]}$. The reason is that this statement has the same consistency strength as the bounded forcing axiom for other iterable forcing classes, such as the collection of proper, semi-proper or subcomplete forcing notions, and that it makes an analogous statement about generic absoluteness as the characterization of the traditional bounded forcing axioms by Bagaria: the level of elementarity is one more than guaranteed by \ZFC.

Section \ref{sec:EBSCFA} introduces enhanced bounded forcing axioms, essentially guaranteeing $\Sigma_1$-elementarity with respect to the structure $\kla{\HSC,\in,I\cap\HSC}$, where $I$ is some adequate definable class. I calculate the consistency strength of an enhanced principle that is strong enough to guarantee the conclusions of Section \ref{sec:MPSC}, and develop a type of large cardinal that allows us to produce forcing extensions where this principle holds.


\section{Well-orders from maximality principles for subcomplete forcing}
\label{sec:MPSC}

The first forcing principle I will look at is a maximality principle, introduced in generality by Stavi and V\"{a}\"{a}n\"{a}nen \cite{StaviVaananen:ReflectionPrinciples} and Hamkins \cite{Hamkins2003:MaximalityPrinciple}.

\begin{defn}%
\label{defn:MP}
Let $\Gamma$ be a class of forcing notions (here we can take it to be definable without parameters), and let $X$ be a term defining a set (again, for the present purposes, it can be taken to be a parameter free definition). Then $\MP_\Gamma(X)$, the \emph{maximality principle for $\Gamma$, with parameters from $X$,} is the scheme of formulas asserting, for every formula $\phi(\vx)$ and for all $\va\in X$, that if $\phi(\va)$ can be forced to be true by a forcing notion $\P\in\Gamma$ in such a way that for every $\P$-name $\dot{\Q}$ such that $\forces_\P$``$\dot{\Q}\in\Gamma$'', $\forces_{\P*\dot{\Q}}\phi(\va)$, then $\phi(\va)$ holds already in $\V$.

If $\Gamma$ is the class of subcomplete forcing notions, then I write $\MPSC(X)$ for the principle. The boldface maximality principle for subcomplete forcing is when $X=H_{\omega_2}$, so $\bfMPSC$. If $\Gamma$ is the class of countably closed forcing notions, the resulting principle is denoted $\MPsigmaclosed(X)$, and again, the boldface maximality principle for countably closed forcing uses the parameter set $X=H_{\omega_2}$, denoted $\bfMPsigmaclosed$.
\end{defn}

In the context of maximality principles for subcomplete forcing, I will say (as is customary) that a statement $\phi$ is \emph{subcomplete-forceable} if it can be forced to hold by a subcomplete forcing, and it is \emph{subcomplete-necessary} if it holds in any forcing extension obtained by subcomplete forcing. It is subcomplete-forceably necessary if the statement ``$\phi$ is subcomplete-necessary'' is subcomplete-forceable.

The maximality principles for countably closed forcing notions (as well as ${<}\kappa$-closed forcing notions, and other classes) were studied in detail in \cite{Fuchs:MPclosed}. The versions for subcomplete forcing were considered in \cite{Kaethe:Diss}, and the emerging picture was that the boldface maximality principles for these two classes have very similar consequences. For example, they both imply Jensen's $\diamondsuit$ principle and the nonexistence of Kurepa trees. The present work will indicate that they are rather different, after all, at least under a suitable assumption on the set-theoretic geology of the universe under consideration.


To explain this assumption, I will have to say a few words about set-theoretic geology. Research in this area, initiated in \cite{FuchsHamkinsReitz:Geology}, is concerned with the structure of \emph{grounds} of a universe $\V$. An inner model $M$ of $\ZFC$ is a ground if $\V=M[g]$, for some $g$ which is generic over $M$ for some forcing $\P\in M$. The chief object of study is the mantle $\Mantle$, that is the intersection of all grounds. It was shown in \cite[Corollary 13]{FuchsHamkinsReitz:Geology} that the mantle $\Mantle$ is a definable class, and the following strong downward directedness of grounds hypothesis (strong DDG) was isolated there (\cite[Definition 19]{FuchsHamkinsReitz:Geology}): the grounds of the universe are downward set-directed. It was shown in \cite[Theorem 22]{FuchsHamkinsReitz:Geology} that the strong DDG implies that the mantle \Mantle{} is a model of \ZFC. An auxiliary inner model, called the generic mantle, was also introduced in \cite[Definition 42]{FuchsHamkinsReitz:Geology}, defined to be the intersection of all mantles of all set-forcing extensions. It was shown in \cite[Theorem 44]{FuchsHamkinsReitz:Geology} that the generic mantle is an inner model of \ZF{} invariant under set-forcing. Finally, in \cite[Corollary 51]{FuchsHamkinsReitz:Geology}, it was shown that if the generic DDG holds, that is, if in all set-forcing extensions, the grounds are downward directed, then the mantle and the generic mantle coincide.

In a major step for set-theoretic geology, it was shown recently in \cite[Theorem 1.3]{Usuba:DDGandVeryLargeCardinals} that it is a \ZFC{} theorem that the grounds are downward set directed. In particular, the generic DDG also holds, and, putting this together with the previously known implications of downward directedness, it follows that the mantle \Mantle{} is a forcing-invariant model of \ZFC. This fact also shows that if there are only set-many grounds, then there is a smallest ground, known as a bedrock (see \cite{Reitz:TheGroundAxiom}), which in this situation is equal to the mantle. So if there are only set-many grounds, then the universe is a set-forcing extension of its mantle, and it is easy to see that these conditions are equivalent, because if $\V=\Mantle[G]$, where $G$ is generic over $\Mantle$ for some complete Boolean algebra $\B\in\Mantle$, then every ground $W$ of $\V$ is squeezed in between $\Mantle$ and $\V$, and hence is a forcing extension of $\Mantle$ by a subalgebra of $\B$, so there are only set-many possibilities for $W$. It is this assumption on geology that is used in the theorem on the existence of a definable well-ordering of $\power(\omega_1)$.

I would now like to give some background on subcomplete forcing. The concept was introduced by Jensen \cite{Jensen:SPSCF}.

\begin{defn} A transitive set $N$ (usually a model of $\ZFCm$) is \emph{full} if there is an ordinal $\gamma$ such that $L_\gamma(N)\models\ZFCm$ and $N$ is regular in $L_\gamma(N)$, meaning that if $x\in N$, $f\in L_\gamma(N)$ and $f:x\longrightarrow N$, then $\ran(f)\in N$.
\end{defn}

\begin{defn} For a poset $\P$, $\delta(\P)$ is the minimal cardinality of a dense subset of $\P$.
\end{defn}

\begin{defn} Let $N=L^A_\tau=\kla{L_\tau[A],\in,A\cap L_\tau[A]}$ be a $\ZFCm$ model, $\varepsilon$ an ordinal and $X\cup\{\varepsilon\}\sub N$. Then $C^N_\varepsilon(X)$ is the smallest $Y\prec N$ (with respect to inclusion) such that $X\cup\varepsilon\sub Y$.
\end{defn}

Models $N$ of the form described in the previous definition have definable Skolem-functions, so that the definition of $C^N_\varepsilon(X)$ makes sense.
\begin{defn} A forcing notion $\P$ is \emph{subcomplete} if there is a cardinal $\theta$ which verifies the subcompleteness of $\P$, which means that $\P\in H_\theta$, and for any $\ZFCm$ model $N=L_\tau^A$ with $\theta<\tau$ and $H_\theta\sub L_\tau[A]$, any $\sigma:\bN\prec N$ such that $\bN$ is countable, transitive and full and such that $\P,\theta\in\ran(\sigma)$, any $\bar{G}\sub\bar{\P}$ which is $\bar{\P}$-generic over $\bN$, and any $s\in\ran(\sigma)$, the following holds. Letting $\sigma(\kla{\bar{s},\bar{\theta},\bar{\P}})=\kla{s,\theta,\P}$, there is a condition $p\in\P$ such that whenever $G\sub\P$ is $\P$-generic over $\V$ with $p\in G$, there is in $\V[G]$ a $\sigma'$ such that
\begin{enumerate}
  \item $\sigma':\bN\prec N$,
  \item $\sigma'(\kla{\bar{s},\bar{\theta},\bar{\P}})=\kla{s,\theta,\P}$,
  \item $(\sigma')``\bar{G}\sub G$,
  \item $C^N_{\delta(\P)}(\ran(\sigma'))=C^N_{\delta(\P)}(\ran(\sigma))$.
\end{enumerate}
\end{defn}

Jensen showed that if one requires $\sigma'=\sigma$ in the previous definition, the resulting concept is equivalent to saying that the complete Boolean algebra of $\P$ is isomorphic to that of a countably closed forcing. Thus, in a sense, subcompleteness can be viewed as a natural weakening of countable closure. The main fact on subcomplete forcing that I will need is the following remarkable theorem of Jensen \cite{Jensen:ExtendedNamba}.

\begin{thm}
\label{thm:ExtendedNambaForcing}
Let $\kappa$ be an inaccessible cardinal, and assume that \GCH{} holds below $\kappa$. Let $A\sub\kappa$ be a set of regular cardinals. Then there is a subcomplete, $\kappa$-c.c.~forcing $\P$ of size $\kappa$ such that if $G$ is $\P$-generic, then $\kappa=\omega_2^{\V[G]}$ and for every regular $\tau\in(\omega_1,\kappa)$,
\[\cf^{V[G]}(\tau)=\left\{
\begin{array}{l@{\qquad}l}
\omega_1 &\text{if $\tau\in A$}\\
\omega &\text{otherwise.}
\end{array}
\right.\]
\end{thm}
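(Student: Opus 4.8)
The plan is to build $\P$ as a revised countable support iteration of length $\kappa$ whose factors are cofinality-changing forcings, processing the regular cardinals $\tau\in(\omega_1,\kappa)$ in increasing order. At the stage handling $\tau$, all smaller ground-model regular cardinals will already have been collapsed to $\omega_1$, so that $\tau$ is the $\omega_2$ of the intermediate model $\V[G_\tau]$; there I force with a factor $N_\tau$ adding a cofinal sequence in $\tau$ of order type $\omega_1$ if $\tau\in A$, and of order type $\omega$ if $\tau\notin A$. For $\tau\in A$ I would simply take $N_\tau=\Col{\omega_1,\tau}$ computed in $\V[G_\tau]$, which is countably closed and hence subcomplete, collapses $\tau$ to $\omega_1$ and forces $\cf(\tau)=\omega_1$. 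For $\tau\notin A$ the factor is the intermediate-model version of Namba forcing on $\tau$, which changes $\cf(\tau)$ to $\omega$.

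The first main task is subcompleteness of the factors. The $\tau\in A$ factors are subcomplete for free, since every countably closed forcing is subcomplete. For $\tau\notin A$, each $N_\tau$ has size $<\kappa$ (using \GCH{} below $\kappa$ together with inaccessibility of $\kappa$), and the hypothesis ``\GCH{} below $\kappa$'' is exactly what is needed to run Jensen's verification that Namba forcing is subcomplete: one checks the defining clause by starting from $\sigma:\bN\prec N$ with $\bN$ countable, transitive and full, pulling the generic $\bG$ through, and constructing $\sigma'$ satisfying $C^N_{\delta(N_\tau)}(\ran(\sigma'))=C^N_{\delta(N_\tau)}(\ran(\sigma))$; the fullness of $\bN$ and the \GCH{} bookkeeping are what make the requisite condition $p$ and the map $\sigma'$ exist. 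Granting subcompleteness of all factors, Jensen's theorem that the revised countable support iteration of subcomplete forcings is subcomplete (\cite{Jensen:SPSCF}) yields that $\P$ itself is subcomplete; in particular $\P$ preserves $\omega_1$ and adds no reals.

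Next I would verify the cardinal-structure claims. That $|\P|=\kappa$ follows by bookkeeping since each factor has size $<\kappa$ and $\kappa$ is inaccessible, and the $\kappa$-c.c.\ follows from a standard $\Delta$-system/reflection argument for revised countable support iterations of length equal to the inaccessible $\kappa$ with factors of bounded size; hence $\kappa$ is preserved. That $\kappa$ becomes $\omega_2^{\V[G]}$ is the key collapsing computation: by induction on the regular cardinals $\tau\in(\omega_1,\kappa)$, once $\cf(\tau)$ has been changed to $\omega$ or $\omega_1$, a cofinal sequence of length $\le\omega_1$ has been added whose entries are ordinals below $\tau$, each of cardinality $\le\omega_1$ by the induction hypothesis (with base case $\tau=\omega_2^\V$, whose members have ground-model cardinality $\le\omega_1$); a union of $\le\omega_1$ sets each of size $\le\omega_1$ has size $\le\omega_1$, so $\tau$ is collapsed to $\omega_1$. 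Thus every ground-model cardinal in $(\omega_1,\kappa)$ has cardinality $\omega_1$ in $\V[G]$, while $\omega_1$ and $\kappa$ are preserved, so $\kappa=\omega_2^{\V[G]}$.

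The step I expect to be the real obstacle is the exact computation of $\cf^{\V[G]}(\tau)$, namely showing that the cofinality established at stage $\tau$ is not disturbed by the tail of the iteration. The factor $N_\tau$ places a cofinal sequence of the correct order type, giving the upper bounds $\cf^{\V[G]}(\tau)\le\omega_1$ for $\tau\in A$ and $\cf^{\V[G]}(\tau)=\omega$ for $\tau\notin A$ (the latter is then exact, since the cofinality of a limit ordinal is at least $\omega$). The difficulty is the lower bound when $\tau\in A$: one must rule out that a later Namba factor, which does add new $\omega$-sequences, adds a cofinal $\omega$-sequence in $\tau$ and thereby lowers $\cf(\tau)$ to $\omega$. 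Here I would argue that the tail of the iteration above stage $\tau$ preserves $\cf(\tau)=\omega_1$ — for instance by factoring the iteration and exhibiting enough closure or distributivity of the tail at $\tau$, so that no new cofinal $\omega$-sequence appears in any ordinal that was arranged to have cofinality $\omega_1$. Making this preservation argument precise for the entire revised countable support tail, uniformly in $\tau$, is the technical heart of the proof.
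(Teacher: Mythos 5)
First, a point of comparison: the paper offers no proof of this theorem at all --- it is quoted as a ``remarkable theorem of Jensen'' from his unpublished notes on the extended Namba problem \cite{Jensen:ExtendedNamba} --- so your sketch cannot be measured against an argument in the text, only on its own terms. On those terms, the architecture (an RCS iteration interleaving collapses with Namba-type factors, in the spirit of \cite[Ch.~3]{Jensen:FAandCH}) is a reasonable first guess, but the proposal has its emphasis inverted. The step you single out as ``the technical heart'' --- that the tail of the iteration does not lower $\cf(\tau)$ from $\omega_1$ to $\omega$ for $\tau\in A$ --- is in fact immediate and needs no closure or distributivity of the tail: the whole iteration is subcomplete, hence preserves $\omega_1$, and any $\omega_1$-preserving extension preserves the statement ``$\cf(\alpha)=\omega_1$'', since a new cofinal $\omega$-sequence in such an $\alpha$, composed with a ground-model increasing cofinal map $\omega_1\To\alpha$, would yield a cofinal $\omega$-sequence in $\omega_1$. (Likewise $\cf(\tau)=\omega$ is upward absolute.)

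The genuine obstacle, which the proposal does not address, lies in the opposite direction: you must guarantee that the Namba factor applied at stage $\tau$ (for $\tau\notin A$) does not already change the cofinality of some \emph{later} regular cardinal $\tau'\in A$ to $\omega$. Cofinality can only decrease under further forcing, so if this happens the collapse scheduled at stage $\tau'$ cannot repair it, and the conclusion fails for $\tau'$. Off-the-shelf Namba forcing on the $\omega_2$ of the intermediate model gives no such guarantee --- even what it does to the regularity and cofinality of the next few cardinals above that $\omega_2$ is delicate --- and arranging that the cofinality-$\omega$ ``damage'' lands exactly on the complement of $A$ is precisely the content of the extended Namba \emph{problem} that Jensen's purpose-built forcing solves; your sketch assumes this away. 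Secondary but real gaps: the subcompleteness of each Namba factor over its intermediate model is itself a substantial theorem of Jensen requiring \CH{} there (so one must prove $\tau=\omega_2^{\V[G_\tau]}$ and \CH{} in $\V[G_\tau]$ simultaneously with the subcompleteness induction), and Jensen's RCS iteration theorem carries side conditions (e.g.\ that $\P_\beta$ is forced by $\P_{\beta+1}$ to have size at most $\omega_1$) that the sketch does not set up. As used in this paper, the theorem is a black box imported from \cite{Jensen:ExtendedNamba}, and the proposal does not yet constitute an independent proof of it.
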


I will call this forcing the \emph{extended Namba forcing for $A$,} and denote it by $\N_{A,\kappa}$. The idea is to use it to code subsets of $\omega_1$ into the ``cofinality $\omega$/$\omega_1$ pattern''.

\begin{thm}
\label{thm:WOofPomega1FrombfMPSCandSetManyGrounds}
Assume $\bfMPSC$, and assume that there are only set-many grounds. Then there is a well-ordering of $\power(\omega_1)$, definable without parameters, of order type $\omega_2$ (and in particular, $2^{\omega_1}=\omega_2$).
\end{thm}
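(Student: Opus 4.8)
The plan is to use $\bfMPSC$ to code each $x\sub\omega_1$ into a definable cofinality pattern, read off against the forcing-invariant coordinate system provided by the mantle.

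\emph{Setup.} Since there are only set-many grounds, $\Mantle$ is a ground, $\V=\Mantle[G_0]$ for some set-generic $G_0$, and $\Mantle$ is a parameter-free definable, set-forcing-invariant inner model of $\ZFC$. Fix the parameter-free definable increasing enumeration $\langle\mu_i:i<\vartheta\rangle$ of the regular cardinals of $\Mantle$ lying in $(\omega_1,\omega_2)$, group them into consecutive blocks $b_\gamma=\langle\mu_{\omega_1\cdot\gamma+\alpha}:\alpha<\omega_1\rangle$ of length $\omega_1$, and for $x\sub\omega_1$ say that $b_\gamma$ \emph{codes} $x$ if for every $\alpha<\omega_1$ we have $\alpha\in x\iff\cf(\mu_{\omega_1\cdot\gamma+\alpha})=\omega_1$ (and $\cf(\mu_{\omega_1\cdot\gamma+\alpha})=\omega$ otherwise). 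Let $\phi(x)$ be the assertion ``some $b_\gamma$, $\gamma<\omega_2$, codes $x$''. Because $\Mantle$ is definable without parameters, $\phi$ is a formula whose only parameter is $x\in H_{\omega_2}$, so it is a legitimate instance of $\bfMPSC=\MPSC(H_{\omega_2})$.

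\emph{The coding statement is subcomplete-necessary once forced.} The key point is that a cofinality pattern taking values in $\{\omega,\omega_1\}$ on a fixed set of ordinals is frozen by subcomplete forcing. Indeed, subcomplete forcing preserves $\omega_1$, and if $\cf(\tau)=\omega_1$ is witnessed by an increasing cofinal sequence $\langle\gamma_\xi:\xi<\omega_1\rangle$, then any further $\omega_1$-preserving forcing keeps $\cf(\tau)=\omega_1$: a new cofinal map $f:\omega\to\tau$ would have each $f(n)<\gamma_{\xi_n}$, and $\sup_n\xi_n<\omega_1$ since $\omega_1$ is still regular, bounding $f$ below $\tau$, a contradiction; while $\cf(\tau)=\omega$ is upward absolute outright. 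Moreover, over the model where we have coded, the interval $(\omega_1,\kappa)$ contains no cardinals, so no further forcing can push $\omega_2$ below $\kappa$; a witnessing block $b_{\gamma_0}$ thus stays below $\omega_2$ and keeps its pattern, and $\Mantle$, hence the $\mu_i$, is unchanged. Thus once $\phi(x)$ holds it is subcomplete-necessary.

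\emph{Forceability, and conclusion.} Given $x\sub\omega_1$, pick an inaccessible $\kappa$ (available under our hypotheses), first force $\GCH$ below $\kappa$ by a ${<}\kappa$-closed, hence subcomplete, forcing, and then apply the extended Namba forcing $\N_{A,\kappa}$ of Theorem \ref{thm:ExtendedNambaForcing} with $A$ chosen so that, on a block of $\Mantle$-regular (equivalently, sufficiently large $\V$-regular) cardinals below $\kappa$, the cofinality pattern is exactly the characteristic function of $x$. In the extension $\kappa=\omega_2$ and this block codes $x$, so $\phi(x)$ holds and, by the previous paragraph, is subcomplete-necessary; hence $\phi(x)$ is subcomplete-forceably necessary. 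By $\bfMPSC$, $\phi(x)$ holds in $\V$, for every $x\sub\omega_1$. Now set $\gamma(x)$ to be the least $\gamma<\omega_2$ such that $b_\gamma$ codes $x$, and define $x<^*y\iff\gamma(x)<\gamma(y)$. This is definable without parameters, and it is a well-order: a block codes at most one set, so $\gamma$ is an injection into $\omega_2$. Injectivity gives $2^{\omega_1}\le\omega_2$, hence $2^{\omega_1}=\omega_2$, and therefore the range of $\gamma$ is a subset of $\omega_2$ of size $\omega_2$, so $<^*$ has order type $\omega_2$.

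\emph{Main obstacle.} The delicate part is not the maximality-principle mechanics but securing and aligning the ingredients of the forcing step: that the hypotheses actually supply an inaccessible with $\GCH$ below it to run $\N_{A,\kappa}$ (this is where the geological assumption, together with the large-cardinal content of $\bfMPSC$, must be invoked), and that the coordinate system built from $\Mantle$-cardinals is simultaneously parameter-free definable, targetable by extended Namba over $\V$ (so the coded cardinals must be $\V$-regular, i.e.\ lie above $|G_0|$), and correctly readable after reflection to $\V$, where the witnessing block may consist of entirely different, lower $\Mantle$-cardinals. Reconciling the fact that $\N_{A,\kappa}$ resets $\omega_2$ to $\kappa$ with the demand that $\phi(x)$ be a statement about $H_{\omega_2}$ that reflects back to $\V$ is the crux.
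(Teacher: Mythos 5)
Your overall strategy is the paper's: code each $x\sub\omega_1$ into a cofinality-$\omega/\omega_1$ pattern on a forcing-invariantly definable set of $\Mantle$-ordinals via the extended Namba forcing, observe that such a pattern is frozen by subcomplete forcing, and reflect with $\bfMPSC$; your preservation argument for the pattern and the final order-type computation are fine. However, your specific coding scheme has a genuine gap. You decode against \emph{blocks of $\omega_1$ consecutive elements of the enumeration of all $\Mantle$-regular cardinals in $(\omega_1,\omega_2)$}. For a block $b_\gamma$ to code $x$, \emph{every} one of its $\omega_1$ members must have its cofinality in the final model dictated by your choice of $A$; but Theorem \ref{thm:ExtendedNambaForcing} only controls the cofinalities of cardinals that are still \emph{regular in the model $\V[h]$ over which $\N_{A,\kappa}$ is applied}. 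A block of consecutive $\Mantle$-regulars will in general contain cardinals that fail this: those $\le$ the size of the forcing from $\Mantle$ to $\V$ (which sit at the bottom of your enumeration and pollute the early blocks), and, more seriously, cardinals collapsed by the Easton iteration forcing $\GCH$ (e.g.\ if $2^\mu=\mu^{++}$ cofinally often below $\kappa$, then cofinally many successor cardinals die, so \emph{no} block of $\omega_1$ consecutive $\Mantle$-regulars consists entirely of survivors). The cofinalities at those positions in the final model are whatever they happen to be, not what $A$ prescribes, so no block can be guaranteed to code the intended $x$, and the decoding in $\V$ reads uncontrolled bits. The paper avoids exactly this by taking as coding points the next $\omega_1$ many \emph{inaccessible} cardinals of $\Mantle$ beyond an existentially quantified threshold $\alpha$ (later relaxed to ``GCH survivors''): these remain regular through both preparatory steps, so every decoded bit is controlled, and the existential quantifier over $\alpha$ keeps the definition parameter-free.

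A secondary gap: you say an inaccessible $\kappa$ is ``available under our hypotheses'' but do not establish this. The paper derives it by showing from $\bfMPSC$ and the forcing-invariance of $\Mantle$ that $\V_{\omega_2}^\Mantle\prec\Mantle$, whence $\omega_2$ is inaccessible in $\Mantle$ and there is a proper class of inaccessibles in $\Mantle$ (hence in $\V$, as $\V$ is a set-forcing extension of $\Mantle$); this step is also what supplies the $\omega_1$ many inaccessible coding points above any prescribed $\alpha$, so it cannot be omitted. Both gaps are repairable by switching your coding points from ``$\Mantle$-regular'' to ``$\Mantle$-inaccessible above $\alpha$'' and carrying out the Tarski--Vaught argument, at which point your proof collapses into the paper's.
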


\begin{proof}
If there are only set-many grounds, then all of these grounds have a common ground, by Usuba's result. This common ground then is the mantle $\Mantle$, and it follows that $\V=\Mantle[g]$, for some $g$ which is generic over $\Mantle$ for some forcing $\P$ in $\Mantle$. Since $\Mantle$ is forcing-absolute, letting $\delta=\omega_2$, it follows from $\bfMPSC$ that $\V_\delta^\Mantle\prec\Mantle$, by verifying the Tarski-Vaught criterion for elementary substructures; see the proof of \cite[Theorem 3.8]{Fuchs:MPclosed} and \cite[Lemma 4.15]{Fuchs:MPclosed}.
Since $\delta$ is regular, it follows that $\delta$ is inaccessible in $\Mantle$. This, in turn, implies that there is a proper class of inaccessible cardinals in $\Mantle$, and since $\V$ is a set forcing extension of $\Mantle$, also in $\V$.

Let $\alpha\ge\card{\P}^+$, and let $\seq{\kappa_i}{i\le\omega_1}$ enumerate the next $\omega_1+1$ inaccessible cardinals above $\alpha$.
We can perform an Easton iteration of at least countably closed forcing notions in order to reach an extension in which $\GCH$ holds below $\kappa_{\omega_1}$, such that if $h$ is generic, then each $\kappa_i$ is still inaccessible in $\V[h]$. Now, given a subset $A$ of $\omega_1$ in $\V$, let $\tA=\{\kappa_i\st i\in A\}$. Then let $\Q=\N_{\tA,\kappa_{\omega_1}}$ be the extended Namba forcing for $\tA$ in $\V[h]$. Thus, $\Q$ is $\kappa_{\omega_1}$-c.c., and if $G$ is $\Q$-generic over $\V[h]$, then for every $\mu\in\tA$, $\V[h][G]$ thinks that $\cf(\mu)=\omega_1$, and for every $\nu\in\kappa_{\omega_1}\ohne\tA$ which is regular in $\V[h]$, $\V[h][G]$ thinks that $\cf(\nu)=\omega$. In particular, the latter is true for every $\kappa_j$ with $j<\omega_1$, $j\notin A$, since $\kappa_j$ remains regular in $\V[h]$. $\kappa_{\omega_1}$ becomes $\omega_2$ in $\V[h][G]$.

Now if $\V[h][G][I]$ is a further subcomplete forcing extension of $\V[h][G]$, then the cofinality of $\kappa_i$, for $i<\omega_1$, cannot change, since subcomplete forcing does not add reals. Moreover, the sequence $\seq{\kappa_i}{i<\omega_1}$ is definable in $\V[h][G][I]$ from the parameter $\alpha$, as the enumeration of the next $\omega_1$ many inaccessible cardinals in $\Mantle$ beyond $\alpha$. Hence, $A$ is definable in $\V[h][G][I]$ as the set of $i<\omega_1$ such that $\cf(\kappa_i)=\omega_1$. Let $\psi(A,\alpha)$ be the statement expressing that if $\seq{\lambda_i}{i<\omega_1}$ enumerates the next $\omega_1$ many inaccessible cardinals of $\Mantle$ beyond $\alpha$, then for all $i<\omega_1$, $i\in A$ iff $\cf(\lambda_i)=\omega_1$. Then the statement
$\phi(A)$, expressing that there is an $\alpha$ such that $\psi(A,\alpha)$ holds in $\V[h][G][I]$. Since $I$ was generic for an arbitrary subcomplete forcing notion in $\V[h][G]$, this means that $\phi(A)$ is necessary with respect to subcomplete forcing extensions in $\V[h][G]$, and hence it is forceably necessary with respect to subcomplete forcing in $\V$. It follows by $\bfMPSC$ that $\phi(A)$ already holds in $\V$. Since $\phi(A)$ was an arbitrary subset of $\omega_1$ in $\V$, it follows that $\phi(B)$ holds, for every $B\sub\omega_1$.

Given $A\sub\omega_1$, let $\beta$ be such that $\psi(A,\beta)$ holds. In $\V^{\Col{\omega_1,\beta}}$, and in any further subcomplete forcing extension, $\psi(A,\beta)$ continues to hold, and $\beta<\omega_2$ there. Thus, if we let $\alpha_A$ be least such that $\psi(A,\alpha_A)$ holds, it is subcomplete forceably necessary that $\alpha_A<\omega_2$, and so, it is already less than $\omega_2$. This shows that if we define $A<^*B$ iff $\alpha_A<\alpha_B$, for $A,B\sub\power(\omega_1)$, then this is a well-ordering of $\power(\omega_1)$ of order type $\omega_2$.
\end{proof}

Note that the conclusion of the previous theorem implies that $\power(\omega_1)\sub\HOD$.

Of course, we don't really need the coding points to be inaccessible cardinals. The following cardinals will do.

\begin{defn}
\label{defn:GCHsurvivor}
A regular cardinal $\kappa\ge\omega_2$ is \emph{a GCH survivor} if it remains a regular cardinal after performing the standard forcing to force \GCH.
\end{defn}

For example, successors of strong limit cardinals are GCH survivors. All we needed in the proof of the previous theorem was that inaccessible cardinals are GCH survivors, and since we don't need a proper class of inaccessible cardinals, we will be able to work with the lightface maximality principle $\MPSC$, that is $\MPSC(\leer)$, in the following theorem, but we will only get a well-order of $\reals$. Note that the lightface $\MPSC$ already implies $\CH$. This is because the standard forcing $\Add(\omega_1,1)$ to add a new subset to $\omega_1$ with countable conditions forces \CH, and it is countably closed, hence subcomplete. Further subcomplete forcing cannot add reals, and hence preserves \CH. Thus, \CH{} is subcomplete-forceably necessary.

\begin{thm}
\label{thm:MPSCgiveWOofR}
Assume $\MPSC$, and assume that there are only set-many grounds. Then there is a well-ordering of $\reals$, definable without parameters.
\end{thm}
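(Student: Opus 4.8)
The plan is to imitate the proof of Theorem~\ref{thm:WOofPomega1FrombfMPSCandSetManyGrounds}, replacing the family of statements $\phi(A)$ (one per $A\sub\omega_1$, which forced the use of the \emph{boldface} principle since $A$ occurred as a parameter) by a \emph{single} parameter-free sentence that codes all reals at once. First I would run the geological part verbatim: set-many grounds together with Usuba's theorem give $\V=\Mantle[g]$ for some $g$ generic over $\Mantle$ for a forcing $\P\in\Mantle$, with $\Mantle$ definable and forcing-invariant. As noted just before the statement, lightface $\MPSC$ already implies \CH, so $\power(\omega)$ has size $\omega_1$ and I may fix an enumeration $\seq{x_\xi}{\xi<\omega_1}$ of all reals. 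The reflection argument behind \cite[Theorem 3.8]{Fuchs:MPclosed} and \cite[Lemma 4.15]{Fuchs:MPclosed} still applies (its relevant instances here are parameter-free) and should yield that $\omega_2$ is inaccessible in $\Mantle$; since for reals I only need to code $\omega_1$-many objects, I do not need a proper class of inaccessibles, but merely one inaccessible $\kappa$ (of $\V$, equivalently of $\Mantle$ above $\card{\P}$) together with sufficiently many regular cardinals below it. Following the remark, these coding points may be taken to be GCH survivors of $\Mantle$ rather than genuine inaccessibles, which is what makes the weaker, lightface reflection enough.

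For the coding, given a real $x$ and an ordinal $\alpha$, let $\seq{\mu^\alpha_n}{n<\omega}$ enumerate the first $\omega$ GCH survivors of $\Mantle$ above $\alpha$, and let $\psi(x,\alpha)$ assert that each $\mu^\alpha_n$ has cofinality $\omega$ or $\omega_1$ and that $x=\{n<\omega\st\cf(\mu^\alpha_n)=\omega_1\}$. Let $\Phi$ be the sentence $\forall x\sub\omega\,\exists\alpha\,\psi(x,\alpha)$. The whole point of passing to $\Phi$ is that it carries no free parameters, so the lightface principle $\MPSC$ can be applied to it directly. Next I would show that $\Phi$ is subcomplete-forceably necessary: partition the GCH survivors of $\Mantle$ strictly between $\card{\P}$ and $\kappa$ into consecutive length-$\omega$ blocks $\seq{\mu^{\beta_\xi}_n}{n<\omega}$, one for each $\xi<\omega_1$ (this is where \CH{} is essential, as it bounds the number of blocks needed), and set $\tA$ to be the set of those coding points $\mu^{\beta_\xi}_n$ with $n\in x_\xi$. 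As in the earlier proof, first force \GCH{} below $\kappa$ by a (at least countably closed) Easton preparation $h$ preserving the inaccessibility of $\kappa$ and the regularity of the coding points, then force with the extended Namba forcing $\Q=\N_{\tA,\kappa}$ over $\V[h]$, which turns $\kappa$ into $\omega_2$. Since neither the countably closed preparation nor the subcomplete $\Q$ adds reals, the reals of $\V[h][G]$ are exactly $\seq{x_\xi}{\xi<\omega_1}$, and each $x_\xi$ is coded above $\beta_\xi$, so $\Phi$ holds there; moreover $\V[h][G]$ is a subcomplete extension of $\V$.

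In any further subcomplete extension no new reals appear, $\omega_1$ is preserved, the survivors of $\Mantle$ are unchanged (as $\Mantle$ is forcing-invariant), and the cofinality pattern of the coding points is preserved (a coding point of cofinality $\omega$ stays so, and one of cofinality $\omega_1$ cannot be singularized while $\omega_1$ is preserved). Hence $\Phi$ is subcomplete-necessary in $\V[h][G]$, so subcomplete-forceably necessary in $\V$, and by $\MPSC$ it holds in $\V$. Finally, working in $\V$, for each real $x$ let $\alpha_x$ be least with $\psi(x,\alpha_x)$ (which exists by $\Phi$), and define $x<^*y$ iff $\alpha_x<\alpha_y$. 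This is definable without parameters; it is a strict linear order because $\alpha_x=\alpha_y$ forces $x=y$ (both equal the cofinality pattern above that ordinal), so $x\mapsto\alpha_x$ is injective; and being order-isomorphic to a set of ordinals, $<^*$ is a well-ordering of $\reals$.

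The hard part will be the very first step: extracting the required coding points in $\Mantle$ from the \emph{lightface} principle. The extended Namba coding genuinely needs a $\V$-inaccessible $\kappa$ (above $\card{\P}$ and above $\omega_{\omega_1}$) to serve as the collapse target producing the new $\omega_2$, with $\omega_1$-many regular cardinals below it pushed down to cofinality $\omega$ or $\omega_1$. In the boldface proof such inaccessibles came from a whole proper class of $\Mantle$-inaccessibles, obtained from the full elementarity $V_{\omega_2}^\Mantle\prec\Mantle$, whose verification uses parameters. I therefore expect the main work to be checking that the parameter-free reflection delivered by $\MPSC$ still secures at least one $\Mantle$-inaccessible above $\card{\P}$ (together with enough GCH survivors below it), since cofinalities can only be pushed down by forcing, and hence coding points below $\omega_2^\V$ — whose $\V$-cofinality is already at most $\omega_1$ — will not suffice.
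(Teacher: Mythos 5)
Your overall architecture (geology, Easton \GCH{} preparation, extended Namba coding, least-witness well-order) matches the paper's, but you diverge at the decisive point, and the divergence creates a gap that you flag yourself but do not close. By packaging all $\omega_1$-many reals into the single parameter-free sentence $\Phi$, you are forced to code along $\omega_1$-many blocks of coding points below one cardinal $\kappa$ that the extended Namba forcing of Theorem \ref{thm:ExtendedNambaForcing} turns into the new $\omega_2$ --- and that theorem requires $\kappa$ to be inaccessible. Lightface $\MPSC$ together with set-many grounds does not supply such a $\kappa$: the natural model of $\MPSC$ arises by iterating up to a regular $\delta$ with $\V_\delta\prec\V$ (hence inaccessible) and collapsing it to $\omega_2$, and if the ground model has no inaccessibles above $\delta$, then neither $\V$ nor $\Mantle$ has any inaccessible above $\card{\P}$. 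The parameter-free reflection available from lightface $\MPSC$ (which only tolerates parameters in $H_{\omega_1}$) cannot run the Tarski--Vaught argument giving $\V_{\omega_2}^{\Mantle}\prec\Mantle$, since that argument needs parameters of size $\omega_1$; so your hoped-for ``main work'' is not a technicality to be checked but an assertion that fails in general.

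The paper's proof avoids the problem by two observations you did not use. First, lightface $\MPSC$ already implies the version with parameters from $H_{\omega_1}$, because subcomplete forcing does not change $H_{\omega_1}$ (see \cite[Theorem 2.6]{Fuchs:MPclosed} and \cite{Leibman:Diss}); hence each real $a$ is a legitimate parameter and there is no need for a single parameter-free sentence quantifying over all reals. Second, coding one real at a time needs only $\omega$ many GCH survivors above $\alpha$ (which provably exist, e.g.\ successors of strong limit cardinals), whose supremum $\tilde\kappa$ already has countable cofinality, and Jensen's extended Namba construction applies below such a non-inaccessible $\tilde\kappa$ (\cite[Chapter 3, pp.~16--17]{Jensen:FAandCH}). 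Your instinct to replace inaccessibles by GCH survivors of $\Mantle$ is exactly the paper's move, but it only pays off when each real is coded separately; coding all reals simultaneously reintroduces the need for an inaccessible target for the new $\omega_2$, which is precisely what the lightface hypothesis cannot provide.
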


\begin{proof}
Let $\alpha$ be greater than the chain condition of the forcing leading from $\Mantle$ to $\V$, and let $\seq{\kappa_n}{n<\omega}$ enumerate the next $\omega$ many GCH survivors above $\alpha$.
We can perform an Easton iteration of at least countably closed forcing notions in order to reach an extension in which $\GCH$ holds below $\tilde{\kappa}=\sup_{n<\omega}\kappa_n$, such that if $h$ is generic, each $\kappa_n$ is still a regular cardinal $\V[h]$. Now, given a real $a\sub\omega$ in $\V$, and letting $\tilde{a}=\{\kappa_n\st n\in a\}$, the extended Namba forcing $\N_{\tilde{a},\tilde{\kappa}}$ can be used to reach a model $\V[h][G]$ that thinks that for every $n<\omega$, $\cf(\kappa_n)=\omega_1$ iff $n\in a$ and $\cf(\kappa_n)=\omega$ iff $n\notin a$, even though we are not working below an inaccessible cardinal, see \cite[Chapter 3, pp.~16-17]{Jensen:FAandCH}; $\tilde{\kappa}$ of course has countable cofinality already in $\V$.

Now if $\V[h][G][I]$ is a further subcomplete forcing extension of $\V[h][G]$, then as before, the cofinality of $\kappa_n$, for $n<\omega$, cannot change. Moreover, the sequence $\seq{\kappa_n}{n<\omega}$ is definable in $\V[h][G][I]$ from the parameter $\alpha$, as the enumeration of the next $\omega$ many ordinals beyond $\alpha$ that are GCH survivors in $\Mantle$. Hence, $a$ is definable as the set of $n<\omega$ such that $\cf(\kappa_n)=\omega_1$. So the statement $\phi(a)$, expressing that there is an $\alpha$ such that $\psi(a,\alpha)$ holds, meaning that if $\seq{\lambda_i}{i<\omega_1}$ enumerates the next $\omega$ many GCH survivors beyond $\alpha$ in $\Mantle$, then for all $n<\omega$, $n\in a$ iff $\cf(\lambda_n)=\omega_1$, is subcomplete-forceably necessary, and hence true in $\V$, by $\MPSC$. Note that the lightface $\MPSC$ implies the form that allows parameters from $H_{\omega_1}$, see \cite[Theorem 2.6]{Fuchs:MPclosed}, \cite[Lemma 1.10, Theorem 1.11, Corollary 1.12]{Leibman:Diss}; the point is that subcomplete forcing does not change $H_{\omega_1}$.

Now, the proof can be completed as before. Given $a\sub\omega$, let $\beta$ be such that $\psi(a,\beta)$ holds. In $\V^{\Col{\omega_1,\beta}}$, and in any further subcomplete forcing extension, $\psi(a,\beta)$ continues to hold, and $\beta<\omega_2$ there. Thus, if we let $\alpha_a$ be least such that $\psi(a,\alpha_a)$ holds, it is subcomplete forceably necessary that $\alpha_a<\omega_2$, and so, it is already less than $\omega_2$. This shows that if we define $a<b$ iff $\alpha_a<\alpha_b$, for $a,b\sub\omega$, then this is a well-ordering of $\power(\omega)$, as wished.
\end{proof}

As before, the conclusion of the previous theorem implies that $\power(\omega)\sub\HOD$.

It is easy to see that $\bfMPSC$ is preserved by subcomplete forcing notions that don't change $H_{\omega_2}$, that is, that don't add subsets of $\omega_1$ (see \cite[Lemma 4.2]{Fuchs:MPclosed} for the corresponding fact in the context of ${<}\kappa$-closed forcing, the proof of which easily generalizes). In particular, it is preserved by ${<}\omega_2$-distributive forcing notions, since by \cite[Theorem 6.2 and Observation 2.4]{Fuchs:ParametricSubcompleteness}, every ${<}\omega_2$-distributive forcing notion is subcomplete. The proof of Theorem \ref{thm:WOofPomega1FrombfMPSCandSetManyGrounds} shows that the requirement of not adding subsets of $\omega_1$ is needed in order to be able to conclude that a subcomplete forcing preserve $\bfMPSC$, if one insists that it preserve $\omega_2$.

\begin{cor}
\label{cor:PreservingbfMPSC}
Assume there are only set-many grounds.
Suppose $N$ is a set-forcing extension of $\V$ such that $\bfMPSC$ holds in $N$, and that $\omega_2^\V=\omega_2^N$. Then $\power(\omega_1)^\V=\power(\omega_1)^N$.
\end{cor}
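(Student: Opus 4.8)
The plan is to reduce to the inclusion $\power(\omega_1)^N\sub\power(\omega_1)^\V$ (the reverse inclusion being trivial since $\V\sub N$) and to exploit that the well-order of Theorem~\ref{thm:WOofPomega1FrombfMPSCandSetManyGrounds} is given by a parameter-free formula $\psi(A,\alpha)$ referring only to $\Mantle$ and to which of the $\Mantle$-inaccessibles have cofinality $\omega_1$. First I would verify that the theorem is available in $N$ and speaks about the same mantle. Since $\omega_2^\V=\omega_2^N$ and every $N$-cardinal is a $\V$-cardinal, $\omega_1^N$ is a $\V$-cardinal with $\omega_1^\V\le\omega_1^N<\omega_2^N=\omega_2^\V$; as $\omega_1^\V$ and $\omega_2^\V$ are consecutive $\V$-cardinals, this forces $\omega_1^N=\omega_1^\V$, so $\omega_1$ is preserved as well. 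Because the mantle is invariant under set forcing and, by Usuba's theorem together with the hypothesis of set-many grounds, $\V=\Mantle[g]$ is a set-forcing extension of $\Mantle=\Mantle^\V$, the further set-forcing extension $N$ is again a set-forcing extension of the same $\Mantle$; the squeezing argument recalled before the theorem shows that $N$ too has only set-many grounds and $\Mantle^N=\Mantle$. Thus Theorem~\ref{thm:WOofPomega1FrombfMPSCandSetManyGrounds} applies in $N$, and for a fixed ordinal $\alpha$ the sequence $\seq{\lambda_i}{i<\omega_1}$ of the next $\omega_1$ inaccessibles of $\Mantle$ beyond $\alpha$ consists of the same ordinals in $\V$ and in $N$.

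The crux is a cofinality-absoluteness lemma: if $\cf^\V(\lambda)\in\{\omega,\omega_1\}$, then $\cf^\V(\lambda)=\omega_1$ iff $\cf^N(\lambda)=\omega_1$ (and likewise for $\omega$). Cofinalities cannot increase in passing to $N\super\V$, so $\cf^\V(\lambda)=\omega$ gives $\cf^N(\lambda)=\omega$. Conversely, if $\cf^\V(\lambda)=\omega_1$ but $\cf^N(\lambda)=\omega$, then comparing a cofinal $f\colon\omega\to\lambda$ in $N$ with a cofinal increasing $g\colon\omega_1\to\lambda$ in $\V$ produces ordinals $\eta_n<\omega_1$ with $f(n)<g(\eta_n)$; these must be cofinal in $\omega_1^\V=\omega_1^N$ (otherwise $g$ would bound $f$), which singularizes $\omega_1^N$ and contradicts its regularity in $N$. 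In particular, for any $\Mantle$-inaccessible $\lambda<\omega_2$ — where automatically $\cf^\V(\lambda)\le\omega_1$ — the statement $\cf(\lambda)=\omega_1$ is absolute between $\V$ and $N$.

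To finish, I would take an arbitrary $B\in\power(\omega_1)^N$. Running the construction of Theorem~\ref{thm:WOofPomega1FrombfMPSCandSetManyGrounds} inside $N$ (where $\bfMPSC$ and set-many grounds hold), the statement ``there is an $\alpha$ such that $\psi(B,\alpha)$ holds and the witnessing coding cardinals $\lambda_i$ all lie below $\omega_2$'' is subcomplete-forceably necessary — this is exactly what the collapse $\Col{\omega_1,\beta}$ in that proof arranges — and hence holds in $N$. Fixing such an $\alpha$ and its coding cardinals $\seq{\lambda_i}{i<\omega_1}$, all below $\omega_2$, the cofinality lemma gives $\cf^N(\lambda_i)=\omega_1\iff\cf^\V(\lambda_i)=\omega_1$ for every $i$, so $B=\{i<\omega_1\st\cf^N(\lambda_i)=\omega_1\}=\{i<\omega_1\st\cf^\V(\lambda_i)=\omega_1\}$. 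The last set is computed entirely within $\V$ from the $\Mantle$-ordinals $\lambda_i$, whence $B\in\V$, as desired.

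The step I expect to be the main obstacle is confining the coding cardinals below $\omega_2$. This is essential, because the cofinality lemma genuinely fails for $\Mantle$-inaccessibles of $\V$-cofinality $\ge\omega_2$: a set forcing may lower such a cofinality to $\omega_1$ while preserving both $\omega_1$ and $\omega_2$, which could in principle let $N$ code a set $B\notin\V$. Keeping the coding cardinals below $\omega_2$ — where $\V$-cofinalities are automatically at most $\omega_1$, so that the only possible change, $\omega_1\mapsto\omega$, is already barred by the preservation of $\omega_1$ — is precisely what eliminates this danger, and verifying that the construction can always be carried out below $\omega_2$ (using that $\bfMPSC$ in $N$ forces abundantly many $\Mantle$-inaccessibles below $\omega_2$) is the technical heart of the proof.
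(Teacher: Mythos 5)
Your proposal is correct and follows essentially the same route as the paper's proof: run the construction of Theorem~\ref{thm:WOofPomega1FrombfMPSCandSetManyGrounds} inside $N$ (after checking that $N$ has set-many grounds and the same mantle), keep the coding cardinals below $\omega_2$, and use the absoluteness of $\cf(\lambda)\in\{\omega,\omega_1\}$ for $\lambda<\omega_2$ between $\V$ and $N$ to recover $B$ in $\V$. The cofinality-absoluteness argument you give (singularizing $\omega_1$ by composing cofinal maps) is exactly the one in the paper.
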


\begin{proof}
Note that $\omega_1^\V=\omega_1^N$, so I won't distinguish between the two.
Given a set $A\sub\omega_1$ in $N$, working in $N$, it follows from the proof of Theorem \ref{thm:WOofPomega1FrombfMPSCandSetManyGrounds} that there is some $\alpha<\omega_2$ such that if one lets $\seq{\kappa_i}{i<\omega_1}$ enumerate the next $\omega_1$ many inaccessible cardinals of $\Mantle^N$ beyond $\alpha$, then for all $i<\omega_1$, $i\in A$ iff $\cf(\kappa_i)=\omega_1$, and $i\notin A$ iff $\cf(\kappa_i)=\omega$. This is because $N$ also only has set-many grounds. This is because we know that the mantle $\Mantle$ of $\V$ is the same as the mantle of $N$, and that $\Mantle$ is a ground of $\V$, which is a ground of $N$. So the mantle of $N$ is a ground of $N$, and it was pointed out in the discussion after Definition \ref{defn:MP} that this is equivalent to saying that $N$ has only set many grounds.

Moreover, each $\kappa_i$ is less than $\omega_2$. But since $\omega_2^\V=\omega_2^N$, it follows that $\cf(\kappa_i)^\V=\cf(\kappa_i)^N$: $\cf^\V(\kappa_i)$ can only be $\omega$ or $\omega_1$, since $\kappa_i<\omega_2$. If $\cf(\kappa_i)^N=\omega_1$, then this is true in $\V$ as well, because $\V\sub N$. And if $\cf(\kappa_i)^N=\omega$, then this holds in $\V$, or else $\omega_1^\V$ would have to be countable in $N$.

Now, since $\Mantle^\V=\Mantle^N$, it follows that $\vec{\kappa}$ is definable in $\V$ from $\alpha$, and $A$ is definable from $\vec{\kappa}$, so $A\in\V$.
\end{proof}

Focusing on subcomplete forcing notions, we get the following characterization.

\begin{cor}
Assume $\bfMPSC$ and there are only set-many grounds, and let $\P$ be a subcomplete forcing that preserves $\omega_2$. Then $\P$ preserves $\bfMPSC$ iff $\P$ doesn't add subsets of $\omega_1$.
\end{cor}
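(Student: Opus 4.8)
The plan is to prove the two directions of the biconditional separately, reducing each to a result already available in the excerpt.

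For the implication that if $\P$ adds no subsets of $\omega_1$ then $\P$ preserves $\bfMPSC$, I would first note that since $\P$ is subcomplete it preserves $\omega_1$ and adds no reals, and by hypothesis it preserves $\omega_2$. I would then argue that such a forcing, if it adds no new subset of $\omega_1$, cannot change $H_{\omega_2}$: every element of $H_{\omega_2}$ is coded by a subset of $\omega_1$, and since $\omega_1$ and $\omega_2$ are preserved and no new such code is added, we get $H_{\omega_2}^\V = H_{\omega_2}^{\V[G]}$ for any $\P$-generic $G$. Having established that $\P$ does not change $H_{\omega_2}$, I would invoke the general preservation fact recorded just before Corollary~\ref{cor:PreservingbfMPSC} (the generalization of \cite[Lemma 4.2]{Fuchs:MPclosed}), which states precisely that $\bfMPSC$ is preserved by any subcomplete forcing that leaves $H_{\omega_2}$ unchanged.

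For the converse, suppose $\P$ preserves $\bfMPSC$. Let $G$ be $\P$-generic over $\V$ and put $N=\V[G]$. Then $N$ is a set-forcing extension of $\V$, $\bfMPSC$ holds in $N$ by assumption, and $\omega_2^\V=\omega_2^N$ since $\P$ preserves $\omega_2$. Together with the standing hypothesis that $\V$ has only set-many grounds, these are exactly the hypotheses of Corollary~\ref{cor:PreservingbfMPSC}, which therefore yields $\power(\omega_1)^\V=\power(\omega_1)^N$. In other words, $\P$ adds no subset of $\omega_1$, as required.

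The real content of the corollary lies entirely in Corollary~\ref{cor:PreservingbfMPSC} and in the cited general preservation fact; once these are granted, the argument is merely a routine packaging of the two directions. The only step demanding any care is the equivalence, for a subcomplete $\omega_2$-preserving forcing, between ``adds no subset of $\omega_1$'' and ``does not change $H_{\omega_2}$'', which I would settle using the standard coding of elements of $H_{\omega_2}$ by subsets of $\omega_1$. I do not anticipate a genuine obstacle here.
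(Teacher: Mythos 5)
Your proposal is correct and matches the paper's intended argument exactly: the paper states this corollary without proof precisely because one direction is the preservation fact recorded just before Corollary~\ref{cor:PreservingbfMPSC} (via the observation that a subcomplete, $\omega_2$-preserving forcing adding no subsets of $\omega_1$ leaves $H_{\omega_2}$ unchanged), and the other direction is an immediate application of Corollary~\ref{cor:PreservingbfMPSC} with $N=\V[G]$. Your handling of the coding of elements of $H_{\omega_2}$ by subsets of $\omega_1$ is the right way to settle the one point requiring care.
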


These past two corollaries are in stark contrast to the situation with the boldface maximality principle for countably closed forcing, $\bfMPsigmaclosed$ (see \cite[Lemma 4.10]{Fuchs:MPclosed}):

\begin{fact}
$\bfMPsigmaclosed$ is preserved by the forcing $\Add(\omega_1,\kappa)$, for any $\kappa$.
\end{fact}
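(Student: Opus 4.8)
The plan is to verify $\bfMPsigmaclosed$ directly in $W=\V[g]$, where $g$ is generic for $\P=\Add(\omega_1,\kappa)$. So fix a formula $\phi$ and a parameter $a\in H_{\omega_2}^W$, assume that in $W$ the statement $\phi(a)$ is $\sigma$-closed-forceably necessary, and show $W\models\phi(a)$. The engine of the argument is the interplay of three standard features of $\Add(\omega_1,\kappa)$: it is $\sigma$-closed; it has a product decomposition $\Add(\omega_1,\kappa)\cong\Add(\omega_1,Y)\times\Add(\omega_1,\kappa\ohne Y)$ along any partition of its index set; and each factor is weakly homogeneous. Since $\sigma$-closed forcings iterate to $\sigma$-closed forcings, I will use freely that being ``$\sigma$-closed-necessary'' is itself $\sigma$-closed-necessary, and hence that a statement $\psi$ is $\sigma$-closed-forceably necessary iff its necessitation is.

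First I would localize the parameter. As $a$ is coded by a subset of $\omega_1$ and $\P$ is $\sigma$-closed, there is (by a routine fact about $\Add(\omega_1,\kappa)$) a set $Y\sub\kappa$ with $|Y|\le\omega_1$ such that $a\in\V[g_Y]$, where $g_Y=g\rest(Y\times\omega_1)$; write $\P\cong\P_Y\times\P^Y$ and $g=g_Y\times g^Y$ accordingly. When $\kappa>\omega_1$ the complementary factor $\P^Y=\Add(\omega_1,\kappa\ohne Y)$ is a nontrivial, weakly homogeneous, $\sigma$-closed tail over $\V[g_Y]$, and over $\V[g_Y]$ the set $a$ is a ground model parameter. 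In this favourable situation the argument runs as follows. Let $\Q\in W$ be a $\sigma$-closed witness to ``$\phi(a)$ is $\sigma$-closed-necessary'' and let $\dot\Q$ be a $\P^Y$-name for it; then the iteration $\P^Y*\dot\Q$ witnesses, over $\V[g_Y]$, that $\phi(a)$ is again $\sigma$-closed-forceably necessary, because ``$\phi(a)$ is $\sigma$-closed-necessary'' holds in $W[\Q]=\V[g_Y][g^Y][\Q]$. Applying $\bfMPsigmaclosed$ in $\V[g_Y]$ to the (now forceably necessary) statement ``$\phi(a)$ is $\sigma$-closed-necessary'' yields $\V[g_Y]\models$``$\phi(a)$ is $\sigma$-closed-necessary''; since $W=\V[g_Y][g^Y]$ is a $\sigma$-closed extension of $\V[g_Y]$, this gives $W\models\phi(a)$.

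Thus the case $\kappa>\omega_1$ reduces to the assertion that $\bfMPsigmaclosed$ holds in $\V[g_Y]$, which is an $\Add(\omega_1,Y)$-extension of $\V$ with $|Y|\le\omega_1$; and since $\V\models\bfMPsigmaclosed$, this is precisely the base case $\kappa\le\omega_1$ applied to the extension $\V\to\V[g_Y]$. In this base case the localization need not leave any unused coordinates: the essential configuration is $\P=\Add(\omega_1,1)$ with $a$ the generic subset of $\omega_1$ itself, so that no homogeneous $\sigma$-closed tail is available inside $\P$. I expect this to be the crux. The difficulty is structural: ``$\phi(a)$ is $\sigma$-closed-forceably necessary'' is an assertion about the single generic object $a=\dot a^{g}$, whereas any attempt to reflect it to $\V$ through a name $\dot a\in H_{\omega_2}^\V$ passes through the forcing relation $\forces_\P$, which quantifies universally over all generics; weak homogeneity of $\Add(\omega_1,1)$ relates $g$ only to its ground-definable bit-flips $g\triangle e$ (for $e\in\V$), not to arbitrary generics, so it does not by itself collapse this universal quantifier.

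I would therefore handle the base case by the dedicated homogeneity analysis of $\Add(\omega_1,\cdot)$ carried out in \cite[Lemma 4.10]{Fuchs:MPclosed}, which is exactly the $\omega_1$-closed instance of the general ${<}\kappa$-closed framework treated there. Its method exploits the self-absorption $\Add(\omega_1,1)\cong\Add(\omega_1,1)\times\Add(\omega_1,1)$ together with mutual genericity to manufacture, over a suitable intermediate model, the kind of homogeneous $\sigma$-closed reflection that the unused tail supplied automatically in the favourable case above; combined with the necessity-to-truth transfer along that tail, this yields $W\models\phi(a)$ in the base case as well, completing the verification for every $\kappa$.
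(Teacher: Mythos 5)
The paper does not actually prove this Fact: it is imported verbatim, with the citation \cite[Lemma 4.10]{Fuchs:MPclosed} attached in the surrounding sentence, so there is no in-paper argument to compare yours against. Judged as a standalone proof, your proposal has a genuine gap, and it sits exactly where you say the crux is. The reduction of the general case to $\kappa\le\omega_1$ is sound: localizing $a$ into $\V[g_Y]$ with $|Y|\le\omega_1$ (using the $\omega_2$-c.c., available because $\bfMPsigmaclosed$ implies \CH), observing that $(\P^Y)_{\le p}*\dot\Q$ witnesses over $\V[g_Y]$ that ``$\phi(a)$ is $\sigma$-closed-necessary'' is itself $\sigma$-closed-forceably necessary, applying the maximality principle in $\V[g_Y]$, and transferring necessity up the $\sigma$-closed tail to $W$ --- all of that is fine (weak homogeneity of the tail is not even needed for this part; restricting $\P^Y$ below a condition deciding the relevant facts about $\dot\Q$ suffices). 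But the base case, preservation under $\Add(\omega_1,1)$, is where all the mathematical content lives, and you dispose of it by citing the very lemma the paper cites for the entire statement. Since that lemma already covers arbitrary $\kappa$, the reduction adds nothing and the proposal establishes nothing that is not assumed.

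The concrete missing step is the one you yourself diagnose. When $a=\dot a^{g}$ genuinely depends on the whole generic, the hypothesis ``$\phi(a)$ is $\sigma$-closed-forceably necessary in $W$'' concerns the single object $\dot a^{g}$, while any statement you can hand to $\bfMPsigmaclosed$ in $\V$ must be phrased via $\forces_{\Add(\omega_1,1)}$ with the name $\dot a$ as a parameter; weak homogeneity does not collapse the resulting quantification over generics, because the automorphisms move $\dot a$ along with $g$. Your closing appeal to ``self-absorption $\Add(\omega_1,1)\cong\Add(\omega_1,1)\times\Add(\omega_1,1)$ together with mutual genericity'' names plausible ingredients but does not say which statement about which parameters is reflected to which model, nor how the name $\dot a$ is controlled in the process; as written it cannot be checked. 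To turn this into a proof you would need to carry out that analysis explicitly --- or accept that the proof is by citation, as the paper's is, in which case the preceding reduction is superfluous.
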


The requirement of preserving $\omega_2$ is necessary in the previous two corollaries, because if there is a fully reflecting cardinal $\kappa$ in the original model of $\bfMPSC$, then one can force to another model of $\bfMPSC$ by doing the standard subcomplete forcing iteration, which will render $\kappa$ the new $\omega_2$, and thus will add subsets to $\omega_1$.

To put the assumption that there are only set-many grounds in context, I would like to point out the following fact, due to Usuba, see \cite[Theorem 1.6]{Usuba:DDGandVeryLargeCardinals}.

\begin{fact}[Usuba]
If there is a hyper-huge cardinal $\kappa$, then there are less than $\kappa$ many grounds.
\end{fact}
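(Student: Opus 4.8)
The plan is to reduce the cardinality bound to one technical lemma asserting that hyper-hugeness forces every ground to be generated by a \emph{small} forcing, and then to count the grounds using the bedrock together with the intermediate model theorem. Recall that $\kappa$ is hyper-huge if for every $\lambda>\kappa$ there is an elementary embedding $j\colon\V\to M$ with $\crit(j)=\kappa$, $j(\kappa)>\lambda$ and ${}^{j(\lambda)}M\sub M$; in particular $\kappa$ is inaccessible, so $2^\mu<\kappa$ for every $\mu<\kappa$. Call a ground $W$ a \emph{$\kappa$-ground} if $\V=W[G]$ for some $G$ generic over $W$ for a complete Boolean algebra $\B\in W$ with $\card{\B}^W<\kappa$.

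The heart of the matter is the following \emph{Main Lemma}: if $\kappa$ is hyper-huge, then every ground of $\V$ is a $\kappa$-ground. I expect this to be the main obstacle. I would prove it via Bukovsk\'y's characterization, namely that a ground $W$ arises from a forcing of size $<\kappa$ iff $W$ has the $<\kappa$-global covering property: every $f\colon\alpha\to\On$ in $\V$ is covered by some $F\in W$ with each $F(\xi)$ a set of ordinals of size $<\kappa$ and $f(\xi)\in F(\xi)$. Given a ground $W$ with $\V=W[G]$, $G$ generic for $\B\in W$, and given such an $f$, I would fix a cardinal $\lambda=\beth_\lambda$ with $\lambda>\card{\B},\alpha$ and $\B,f,\power(\B)^W\in V_\lambda$, and apply hyper-hugeness to obtain $j\colon\V\to M$ with $\crit(j)=\kappa$, $j(\kappa)>\lambda$ and ${}^{j(\lambda)}M\sub M$. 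The closure of $M$ gives $V_\lambda\sub M$ and $W\cap V_\lambda,\,G\in M$, so that $W$, $\B$, $f$ and the ground structure are visible in $M$; since $\crit(j)=\kappa$ and $j(\kappa)>\lambda$, the images $j(f)$, $j[\alpha]$ and $j(W)$ (a ground of $M$) can be analyzed inside $M$, and the idea is to read off from $j(W)$ and the abundance of closure below $j(\kappa)$ a cover of $f$ by a function in $W$ whose values have size $<\kappa$. Making this covering extraction precise — in effect, showing that the possibly large $\B$ may be replaced by a forcing of size below the critical point — is the delicate step.

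Granting the Main Lemma, the count is routine. By the ground definability theorem each $\kappa$-ground is uniformly definable from a parameter lying in $\bigcup_{\gamma<\kappa}\power(\power(\gamma))$, a set; hence there are only set-many grounds. By Usuba's downward set-directedness of grounds and the remark following Definition~\ref{defn:MP}, there is then a smallest ground, the bedrock $W_0=\Mantle$, with $\V=W_0[G_0]$ for some $G_0$ generic over $W_0$ for a complete Boolean algebra $\B_0\in W_0$; by the Main Lemma applied to $W_0$ we may take $\card{\B_0}^{W_0}=\mu<\kappa$. Every ground $W$ satisfies $W_0\sub W\sub W_0[G_0]=\V$, so by the intermediate model theorem (Grigorieff) $W=W_0[G_0\cap\mathbb{D}]$ for some complete subalgebra $\mathbb{D}$ of $\B_0$ in $W_0$. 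Thus $\mathbb{D}\mapsto W_0[G_0\cap\mathbb{D}]$ surjects onto the class of grounds, and the number of complete subalgebras of $\B_0$ is at most $(2^{\mu})^{W_0}\le 2^{\mu}<\kappa$ by the inaccessibility of $\kappa$. Hence there are fewer than $\kappa$ grounds.
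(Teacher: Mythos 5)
The paper does not prove this statement --- it is quoted directly from Usuba \cite[Theorem 1.6]{Usuba:DDGandVeryLargeCardinals} --- so your proposal has to be judged against the known argument. Your overall skeleton is the right one, and the counting half is correct: once one knows that the bedrock $W_0=\Mantle$ exists and that $\V=W_0[G_0]$ for a complete Boolean algebra $\B_0\in W_0$ of size $\mu<\kappa$, the intermediate model theorem bounds the number of grounds by the number of complete subalgebras of $\B_0$, which is at most $2^\mu<\kappa$ by inaccessibility. The problem is that everything of substance has been pushed into your ``Main Lemma'' (every ground is a $\kappa$-ground), and for that lemma you give only a statement of intent: fix $\lambda$, take the hyper-huge embedding $j$, and ``read off'' a cover from $j(W)$ and the closure of $M$. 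That extraction is precisely where Usuba's proof lives, and you have not supplied it; as written, the proposal proves nothing beyond the (comparatively easy) reduction.

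There is also a concrete error in the route you propose for the Main Lemma. Bukovsk\'y's theorem says that $W$ having the $<\kappa$-global covering property for $\V$ characterizes $\V$ being a \emph{$\kappa$-cc} generic extension of $W$, not an extension by a forcing of \emph{size} $<\kappa$ (think of $\Add(\omega,\delta)$ for huge $\delta$: it is ccc, so the ground $\omega_1$-globally covers the extension, yet the algebra is arbitrarily large). So even if you completed the covering argument, you would only conclude that $\B_0$ is $\kappa$-cc, and then the final step collapses: a $\kappa$-cc algebra can have $2^{\kappa}$ or more complete subalgebras, so the count of grounds is no longer below $\kappa$. To make your scheme work you must obtain a ground-generating poset of size $<\kappa$ outright (this is what ``$\kappa$-ground'' means in Usuba's paper), which requires a genuinely different and more delicate use of the hyper-huge embedding than global covering alone.
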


Here $\kappa$ is hyper-huge if for every $\lambda$, there is a $j:\V\To M$ such that $j(\kappa)>\lambda$ and ${}^{j(\lambda)}M\sub M$. So the assumption of the previous theorems follows from the existence of large cardinals. It will turn out that one can get a similar conclusion from anti-large-cardinal hypotheses. The following is a technical lemma that tries to get by with as weak an assumption as possible. The status of this assumption is unclear. It certainly holds if there are only set-many grounds. Let's introduce the following notation.

\begin{defn}
For a set $X$ of ordinals and $i<\otp(X)$, let $(X)_i$ be the $i$-th element of $X$ in its monotone enumeration.
\end{defn}

\begin{lem}
\label{lem:WOfromTechnicalAssumption}
Assume $\bfMPSC$, and assume that $\psi(x,y,z)$ is a formula in the language of set theory such that for some set $P\sub\omega_1$, we have that for some $\alpha\in\On$,
\[I_{\alpha,P}=\{\beta\in\On\st\chi(\beta,\alpha,P)\}\]
has size at least $\omega_1$ and consists of GCH survivors.
Suppose, moreover, that for every subcomplete forcing $\P$, if $G$ is generic for $\P$ over $\V$, then in $\V[G]$, there is an ordinal $\alpha'$ such that
\[I_{\alpha,P}=(I_{\alpha',P})^{V[G]}\]
Then there is a well-order of $\power(\omega_1)$, definable from $P$.
\end{lem}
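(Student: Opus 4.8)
The plan is to run the argument of Theorem~\ref{thm:WOofPomega1FrombfMPSCandSetManyGrounds} almost verbatim, with the parameter-free--definable class $I_{\alpha,P}$ of GCH survivors playing exactly the role that the inaccessible cardinals of the forcing-invariant mantle $\Mantle$ played there. First I would fix $\alpha$ and $P$ as in the hypothesis and set $\kappa_i=(I_{\alpha,P})_i$ for $i<\omega_1$, which makes sense since $I_{\alpha,P}$ has order type at least $\omega_1$; these are the coding points. The purpose of the forcing-stability hypothesis is precisely to replace the invariance of $\Mantle$: after passing to any subcomplete extension $\V[G]$, the set $\{\kappa_i\st i<\omega_1\}$ is still recoverable, namely as an initial segment of $(I_{\alpha',P})^{\V[G]}$ for a suitable $\alpha'$. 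Indeed, since subcomplete forcing preserves $\omega_1$ and $I_{\alpha,P}$ and $(I_{\alpha',P})^{\V[G]}$ are literally equal as sets of ordinals, their monotone enumerations agree below $\omega_1$, so $(I_{\alpha',P})^{\V[G]}_i=\kappa_i$ for all $i<\omega_1$.

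Given an arbitrary $A\sub\omega_1$, I would first force \GCH below $\sup_{i<\omega_1}\kappa_i$ by the standard (at least countably closed, hence subcomplete) Easton iteration; since each $\kappa_i$ is a GCH survivor in the sense of Definition~\ref{defn:GCHsurvivor}, each remains regular in the resulting extension $\V[h]$. Then, as in Theorem~\ref{thm:WOofPomega1FrombfMPSCandSetManyGrounds}, using the extended Namba forcing of Theorem~\ref{thm:ExtendedNambaForcing} (or the variant not requiring an inaccessible above the coding points, invoked in the proof of Theorem~\ref{thm:MPSCgiveWOofR}), I would force with the extended Namba forcing $\N_{\tA,\lambda}$ for $\tA=\{\kappa_i\st i\in A\}$, where $\lambda=\sup_{i<\omega_1}\kappa_i$, reaching $\V[h][G]$ in which $\cf(\kappa_i)=\omega_1$ if $i\in A$ and $\cf(\kappa_i)=\omega$ if $i\notin A$.

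Now let $\V[h][G][J]$ be any further subcomplete extension of $\V[h][G]$. The cofinality pattern of the $\kappa_i$ is preserved: a $\kappa_i$ with $\cf(\kappa_i)=\omega$ retains a cofinal $\omega$-sequence, while a $\kappa_i$ with $\cf(\kappa_i)=\omega_1$ cannot acquire a cofinal $\omega$-sequence, since subcomplete forcing preserves $\omega_1$ (any new cofinal $\omega$-sequence, measured against a ground-model continuous cofinal $\omega_1$-sequence in $\kappa_i$, would be bounded below $\kappa_i$). Because subcomplete forcing is closed under two-step iterations (Jensen), $h*G*J$ is subcomplete, so the forcing-stability hypothesis recovers $\seq{\kappa_i}{i<\omega_1}$ in $\V[h][G][J]$ as the first $\omega_1$ elements of $(I_{\alpha'',P})^{\V[h][G][J]}$ for some $\alpha''$, whence $A=\{i<\omega_1\st\cf(\kappa_i)=\omega_1\}$ is definable there from $P$. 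Thus the statement $\phi_P(A)$, expressing that
\[
\exists\alpha'\ \forall i<\omega_1\ \bigl(i\in A\ \leftrightarrow\ \cf\bigl((I_{\alpha',P})_i\bigr)=\omega_1\bigr),
\]
is subcomplete-necessary over $\V[h][G]$, hence subcomplete-forceably necessary over $\V$; since $A,P\in H_{\omega_2}$, $\bfMPSC$ yields that $\phi_P(A)$ already holds in $\V$. As $A$ was arbitrary, $\phi_P(A)$ holds for every $A\sub\omega_1$.

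Finally I would read off the well-order, and here, since the lemma only asserts \emph{some} $P$-definable well-order (not one of order type $\omega_2$), the collapse argument of Theorem~\ref{thm:WOofPomega1FrombfMPSCandSetManyGrounds} is unnecessary: for $A\sub\omega_1$ let $\alpha_A$ be the least $\alpha'$ witnessing $\phi_P(A)$, which exists by the previous paragraph. From the witness, $A$ is recovered as $\{i<\omega_1\st\cf((I_{\alpha_A,P})_i)=\omega_1\}$, so $A\mapsto\alpha_A$ is injective, and $A<^*B\iff\alpha_A<\alpha_B$ defines a well-ordering of $\power(\omega_1)$ whose definition refers only to the fixed formula $\chi$ and the parameter $P$. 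The main obstacle is the coding step itself: one must ensure that a subcomplete forcing singularizing the prescribed $\omega_1$ many GCH survivors to the intended cofinalities is available when the $\kappa_i$ are merely GCH survivors, so that $\sup_{i<\omega_1}\kappa_i$ has cofinality $\omega_1$ rather than being inaccessible, which is exactly where one leans on the flexibility of Jensen's extended Namba forcing. The remaining points---preservation of the cofinality pattern and the bookkeeping of the enumeration under the forcing-stability hypothesis---are routine.
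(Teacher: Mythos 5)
Your proposal is correct and is essentially the paper's argument: the paper's own proof of Lemma~\ref{lem:WOfromTechnicalAssumption} is a terse two-sentence version of exactly this (force the coding of $A$ into the cofinality $\omega$/$\omega_1$-pattern on $I_{\alpha,P}$ as in Theorem~\ref{thm:WOofPomega1FrombfMPSCandSetManyGrounds}, use the stability hypothesis in place of the forcing-invariance of $\Mantle$ to see that $\phi(A,P)$ is subcomplete-forceably necessary, apply $\bfMPSC$, and well-order $\power(\omega_1)$ by least witnesses $\alpha_A$). Your added details---the preservation of the cofinality pattern, the role of the GCH-survivor hypothesis, and the reliance on the variant of extended Namba forcing that does not require an inaccessible above the coding points---are precisely the points the paper leaves implicit by reference to the earlier theorems.
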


\begin{proof}
Given a set $A\sub\omega_1$, and fixing $P$ and $\alpha$ as above, let $\psi(\alpha,A,P)$ be the statement saying ``for all $i<\omega_1$, $i\in A$ iff $\cf((I_{\alpha,P})_i)=\omega_1$ and $i\notin A$ iff $\cf((I_{\alpha,P})_i)=\omega$''. The statement $\phi(A,P)$, saying ``there is an $\alpha'$ such that $\psi(\alpha',A,P)$ holds'' is then subcomplete-forceably necessary, and hence true. So we can let $\alpha_A$ be the least $\alpha'$ such that $\psi(\alpha',A,P)$ holds and say that $A<B$ iff $\alpha_A<\alpha_B$.
\end{proof}

Note that if in the previous lemma, $\chi(x,y,z)$ is absolute with respect to subcomplete forcing, then it must be the case that for every $\gamma$, there is an $\alpha>\gamma$ such that $I_{\alpha,P}$ has order type at least $\omega_1$, consists of GCH survivors, and has $\min(I_{\alpha,P})>\gamma$. Because otherwise, if $\gamma$ is a counterexample, then in $\V^{\Col{\omega_1,\gamma}}$, it's subcomplete-necessary that there is no $\alpha$ such that $I_{\alpha,P}$ is as described, and so there is no such $\alpha$ in $\V$.

\begin{cor}
\label{cor:bfMPSC+NoIMwithAnIAlimitOfMeasurablesImpliesWO}
Assume $\bfMPSC$, and assume there is no inner model with an inaccessible limit of measurable cardinals. Then there is a well-order of $\power(\omega_1)$ of order type $\omega_2$, definable from a subset of $\omega_1$.
\end{cor}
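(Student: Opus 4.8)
The plan is to deduce this from Lemma \ref{lem:WOfromTechnicalAssumption}: since that lemma already extracts the well-order directly from $\bfMPSC$, the only thing left to do is to produce a forcing-absolute class of coding points that are genuine $\V$-regular cardinals, playing the role that the $\Mantle$-inaccessibles played in Theorem \ref{thm:WOofPomega1FrombfMPSCandSetManyGrounds}. The point is that we need not, and indeed cannot, reprove ``$\omega_2$ is inaccessible in $K$'' here, since $K$ is not a ground; instead $\bfMPSC$ is invoked solely inside the lemma, and the anti-large-cardinal hypothesis is used only to supply the coding points.

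First I would invoke the core model theory available under the hypothesis. Assuming there is no inner model with an inaccessible limit of measurable cardinals, the Mitchell core model $K$ for coherent sequences of measures exists, is definable without parameters, is forcing-absolute (so $K^{\V[G]}=K^\V$ for every set-generic $G$), satisfies $\GCH$, and satisfies the covering lemma; in particular I would use weak covering in the form $(\lambda^+)^K=\lambda^+$ for every $\V$-singular cardinal $\lambda$. Applied to $K$ itself, the hypothesis says that $K$ has no inaccessible limit of measurables, and it is exactly this that makes the covering theory available and confines the failures of covering to the measurable cardinals of $K$.

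Next I would describe the coding points purely internally to $K$. Let $\chi(\beta,\alpha,P)$ assert that $\beta$ is among the first $\omega_1$ ordinals greater than $\alpha$ of the form $(\lambda^+)^K$ for some $\lambda\geq\omega_1$ that is singular in $K$, and take $P=\leer$, since $K$ is parameter-free. The key claim is that each such $\beta$ is a genuine $\V$-regular cardinal. A $K$-singular cardinal $\lambda$ is singular in $\V$, because the witnessing cofinal map lies in $K\sub\V$; and it is a genuine $\V$-cardinal, because by the covering lemma a $K$-cardinal can fail to be a $\V$-cardinal only in the vicinity of a measurable cardinal of $K$, and those measurables are regular in $K$, hence disjoint from our singular $\lambda$. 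Weak covering then gives $\beta=(\lambda^+)^K=\lambda^+$, a true successor cardinal of $\V$ above $\omega_1$, hence $\V$-regular and a GCH survivor. Since $K$ carries a proper class of singular cardinals, for every $\alpha$ the set $I_{\alpha,P}$ has size $\omega_1$ and consists of GCH survivors, and there are cofinally many admissible $\alpha$.

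Finally I would check the forcing-stability clause. Because $\chi$ mentions only $K$, which is forcing-absolute, and $\omega_1$, which is preserved (indeed unchanged) by subcomplete forcing since it adds no reals, the class $I_{\alpha,P}$ is computed identically in every subcomplete extension $\V[G]$; hence $(I_{\alpha,P})^{\V[G]}=I_{\alpha,P}$ and the hypothesis of Lemma \ref{lem:WOfromTechnicalAssumption} holds with $\alpha'=\alpha$. The lemma then yields a well-order of $\power(\omega_1)$ definable from $P=\leer$, whose order type is $\omega_2$ by the argument in the proof of Theorem \ref{thm:WOofPomega1FrombfMPSCandSetManyGrounds}. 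I expect the delicate step to be the covering argument that the $K$-singular cardinals are genuine $\V$-cardinals, so that weak covering applies and the coding points come out $\V$-regular; this is precisely where the anti-large-cardinal hypothesis enters. If isolating the ``good'' singular cardinals $K$-internally turns out to be awkward because of the bad set around the measurables, the natural fallback is to absorb a suitable threshold or witness into the parameter $P\sub\omega_1$, which is why the conclusion is stated only for definability from a subset of $\omega_1$ rather than outright parameter-free.
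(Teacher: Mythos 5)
There is a genuine gap at the step you yourself flagged as delicate. Your coding points are chosen internally to $K$, as ordinals of the form $(\lambda^+)^K$ for $K$-singular $\lambda$, and you claim these are genuine $\V$-regular cardinals because ``by the covering lemma a $K$-cardinal can fail to be a $\V$-cardinal only in the vicinity of a measurable cardinal of $K$.'' That is false. The covering lemma constrains cardinal arithmetic in one direction only (e.g.\ $(\lambda^+)^K=\lambda^+$ for suitable $\lambda$ that are singular cardinals \emph{in $\V$}); it does nothing to prevent $\V$ from collapsing $K$-cardinals. Concretely, let $K=L$ (no measurables at all, so the hypothesis holds) and let $\V$ be a Levy-collapse extension making $\aleph_{\omega+1}^L$ countable: then $\lambda=\aleph_\omega^L$ is an $L$-singular $L$-cardinal and $\beta=(\lambda^+)^L$ is a countable ordinal of $\V$, not a regular cardinal, and since such collapses can be performed arbitrarily high up, no threshold $\alpha$ absorbed into the parameter repairs this. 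So the hypothesis of Lemma \ref{lem:WOfromTechnicalAssumption} that $I_{\alpha,P}$ consists of GCH survivors simply fails for your $\chi$, and weak covering never gets to apply because it presupposes the cardinals are singular \emph{in $\V$}, which is exactly what is not available.

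The paper runs the dependency in the opposite direction, and that is the idea your proposal is missing. The coding points are taken to be the first $\omega_1$ many actual GCH survivors of $\V$ above $\omega_1$ --- so their $\V$-regularity is free --- and the whole role of $K$ and Mitchell's covering lemma is to make this set of $\V$-defined points \emph{forcing-absolutely definable}: one first uses $\bfMPSC$ and the forcing invariance of $K$ to get $K|\omega_2\prec K$ (a Tarski--Vaught argument that needs only forcing invariance, not that $K$ be a ground, contrary to your aside that this step ``cannot'' be done), concludes that the measurables of $K$ are bounded below $\omega_2$, so that the maximal system of indiscernibles is coded by some $D\sub\omega_1$ and $K[D]$ satisfies covering; then one covers $I=\{\kappa_i\st i<\omega_1\}$ by a set $Z\in K[D]$ of size $\omega_1$, records the Mostowski-collapsed image $\bar I$ of $I$ as a bounded subset of $\omega_2$, and merges $D$ and $\bar I$ into a single parameter $D^*\sub\omega_1$ with $I\in K[D^*]$. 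The set $I$ is then recovered in every subcomplete extension as a fixed element of the forcing-absolute model $K[D^*]$, which is what Lemma \ref{lem:WOfromTechnicalAssumption} needs. Your closing remark about absorbing ``a witness'' into $P\sub\omega_1$ gestures at this, but a threshold is not enough; the parameter must encode the indiscernibles and the collapsed copy of the coding set itself.
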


\begin{proof}
Under the assumption, the core model $K$ exists. Letting $\delta=\omega_2$, since $K$ is forcing invariant, it follows that $K|\delta\prec K$, as before. This implies that the class of measurable cardinals in $K$ is bounded, as otherwise, $\delta$ would be an inaccessible limit of measurable cardinals in $K$. Thus, the set of measurable cardinals of $K$ is bounded in $\delta$. By Mitchell's covering lemma of \cite{Mitchell:OnTheSCH}, there is a ``maximal'' sequence of indiscernibles $\calC$, which can be viewed as a bounded subset of $\delta$, and hence can be coded in a simple way by a set $D\sub\omega_1$, such that for every uncountable set $X$, there is a set $Y\in K[D]$ of the same cardinality as $X$, with $X\sub Y$.

Let $\seq{\kappa_i}{i<\omega_1}$ enumerate the first $\omega_1$ many GCH-survivors greater than $\omega_1$ in $\V$, and let $I=\{\kappa_i\st i<\omega_1\}$. Let $Z\in K[D]$ be the $<_{K[D]}$-least set of ordinals such that $I\sub Z$ and $Z$ has cardinality $\omega_1$. Let $\bar{I}$ be the image of $I$ under the Mostowski-collapse of $Z$. $\bar{I}$ is a bounded subset of $\omega_2$.

Clearly then, $I\in K[D][\bar{I}]$. Let $D^*$ code $D$ and $\bar{I}$ as a subset of $\omega_1$, so that $I\in K[D^*]$. Let $I$ be the $\alpha$-th element of $K[D^*]$ in the canonical well-order of $K[D^*]$, and let $\psi(x,y,z)$ be the formula expressing that $y$ is an ordinal, and that $x$ belongs to the $y$-th set of $K[z]$. If $I_{\alpha,P}$ is defined from $\psi$ as in the statement of Lemma \ref{lem:WOfromTechnicalAssumption}, then we get that $I_{\alpha,D^*}$ has size $\omega_1$ and consists of GCH-survivors. By the forcing absoluteness of $K[D^*]$ and Lemma \ref{lem:WOfromTechnicalAssumption} then, there is a well-order of $\power(\omega_1)$ of order type $\omega_2$, definable from a subset of $\omega_1$.
\end{proof}

%

\section{Well-orders from the bounded subcomplete forcing axiom}
\label{sec:BSCFA}

\begin{defn}[\cite{GoldsternShelah:BPFA}]
\label{defn:BFA}
Let $\P$ be a notion of forcing, and let $\B$ be its Boolean completion. Then the bounded forcing axiom for $\P$ says that given any collection of $\omega_1$ many maximal antichains in $\B$, each having size at most $\omega_1$, there is a filter in $\B$ that meets each antichain in the collection. If $\Gamma$ is a class of forcing notions, then the bounded forcing axiom for $\Gamma$, denoted $\BFA_\Gamma$, says that the bounded forcing axiom holds for every $\P\in\Gamma$.
\end{defn}

Bagaria \cite{Bagaria:BFAasPrinciplesOfGenAbsoluteness} showed that the bounded forcing axiom can be expressed as a principle of generic absoluteness, as follows.

\begin{lem}
\label{lem:BagariaForIndividualPoset}
The bounded forcing axiom for a poset $\P$ is equivalent to the statement that for every $a\in\HSC$ and every $\Sigma_1$-formula $\phi(x)$ in the language of set theory, if $\forces_\P(\HSC\models\phi(\check{a}))$, then in $\V$, $\HSC\models\phi(a)$.
\end{lem}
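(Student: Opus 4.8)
The plan is to prove the two implications separately, using Bagaria's method of coding $\Sigma_1$-witnesses over $\HSC$ by well-founded relations on $\omega_1$.

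\emph{Absoluteness implies the bounded forcing axiom.} Assume the generic absoluteness statement for $\P$, and let $\kla{A_\xi\st\xi<\omega_1}$ be maximal antichains in $\B$, each of size at most $\omega_1$. I would pass to the subalgebra $\B_0$ of $\B$ generated by $\bigcup_{\xi<\omega_1}A_\xi$; since $\card{\B_0}\le\omega_1$, a code for the pair $\kla{\B_0,\kla{A_\xi\st\xi<\omega_1}}$ lies in $\HSC$. The assertion ``there is a filter on $\B_0$ meeting every $A_\xi$'' is $\Sigma_1$ over $\HSC$ in this parameter. In $V^\B$ the generic filter meets each $A_\xi$ (these being maximal antichains belonging to $V$), so the assertion is forced by $\B$, hence by $\P$; by hypothesis it then holds in $V$, which is precisely the bounded forcing axiom for $\P$.

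\emph{The bounded forcing axiom implies absoluteness.} Write the $\Sigma_1$-formula as $\phi(x)=\exists y\,\psi(x,y)$ with $\psi$ of class $\Sigma_0$, fix $a\in\HSC$, and assume $\forces_\B(\HSC\models\phi(\check a))$. A witness $y$ in $V^\B$ lies in $\HSC^{V^\B}$, so $N=\TC(\{a,y\})$ has size at most $\omega_1$ there and the well-founded relation coding it admits a rank function into some ordinal $\eta$ with $\card{\eta}\le\omega_1$. Passing to $\B\restrict p$ for a condition $p$ deciding such an $\eta$ (harmless, since the bounded forcing axiom for $\B$ entails it for the restriction), I fix $\B$-names $\dot E$ for a relation on $\omega_1$ and $\dot\rho\colon\omega_1\to\check\eta$ such that $p$ forces: $\dot E$ is extensional with transitive collapse $N$; $\check\beta\mathrel{\dot E}\check\alpha\rightarrow\dot\rho(\check\beta)<\dot\rho(\check\alpha)$ for all $\alpha,\beta<\omega_1$; the restriction of $\dot E$ to a fixed reserved block of coordinates equals a ground-model coding of $\TC(\{a\})$, with a designated coordinate collapsing to $a$ and another to a witness $y$. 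I then apply the bounded forcing axiom to the $\omega_1$ many maximal antichains deciding, for each $\alpha,\beta$, the atomic fact $\check\beta\mathrel{\dot E}\check\alpha$, the value $\dot\rho(\check\alpha)$, the implication $\check\beta\mathrel{\dot E}\check\alpha\rightarrow\dot\rho(\check\beta)<\dot\rho(\check\alpha)$, and the extensionality witnesses. The decisive quantitative point is that $\dot\rho$ takes values below $\eta$ with $\card{\eta}\le\omega_1$, so each antichain deciding $\dot\rho(\check\alpha)$ has size at most $\omega_1$; all antichains stay of size $\le\omega_1$, and there are only $\omega_1$ of them. Let $g$ meet them all and read off $E\sub\omega_1\times\omega_1$ and $\rho\colon\omega_1\to\eta$ from $g$.

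\emph{The main obstacle is recovering well-foundedness}, which $g$, not being generic, cannot extract from the membership facts alone (well-foundedness is non-local). This is exactly what the rank function buys: for each pair $\alpha,\beta$ a single element of $g$ simultaneously forces the recovered values $\rho(\alpha),\rho(\beta)$ together with the forced implication, so that $\beta\mathrel{E}\alpha$ yields $\rho(\beta)<\rho(\alpha)$ as genuine ordinals in $V$. Hence $\rho$ certifies in $V$ that $E$ is well-founded, and the selected extensionality witnesses certify that $E$ is extensional, so its transitive collapse is a genuine transitive set $N^*\in V$. The reserved block guarantees $N^*$ contains $a$ with its correct $\in$-structure, and the $\Sigma_0$-absoluteness of $\psi$ between $N^*$ and $V$ converts the forced instance into $\psi(a,y^*)$ in $V$, where $y^*$ is the collapse image of the witness coordinate. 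Therefore $\HSC\models\exists y\,\psi(a,y)$, completing the equivalence.
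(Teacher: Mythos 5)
The paper does not actually prove this lemma---it is quoted from Bagaria \cite{Bagaria:BFAasPrinciplesOfGenAbsoluteness}---so there is no in-paper argument to compare against; judged on its own terms, your first direction (absoluteness implies the antichain version) is fine, but the second direction has two genuine gaps. The first is the parenthetical claim that ``the bounded forcing axiom for $\B$ entails it for the restriction $\B\restrict p$.'' This is false: if $\B$ is the Boolean completion of the lottery sum of the trivial forcing with $\Col{\omega,\omega_1}$, then the principal filter at the trivial atom meets every maximal antichain, so the bounded forcing axiom holds for $\B$, while it fails for the restriction to the collapsing part (a filter meeting the obvious $\omega_1$ many maximal antichains there would yield a surjection of $\omega$ onto $\omega_1$ in $\V$). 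The failure of the bounded forcing axiom to pass to restrictions is exactly why the paper introduces \emph{natural} forcing classes in Definition \ref{defn:Natural} and proves Fact \ref{fact:BFACharacterizationForNaturalGamma} the way it does; for a single poset you cannot localize below a condition, so your reduction to a $p$ deciding $\eta$ is not available.

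The second gap is the ``decisive quantitative point'' itself. The rank of the coding relation is an ordinal $\eta$ of cardinality at most $\omega_1$ \emph{as computed in the extension}, but as a ground-model ordinal it can have $\V$-cardinality $\omega_2$ or more (e.g.\ for $\Col{\omega_1,\omega_2}$, a witness such as an ordinal in the interval $[\omega_2^\V,\omega_2^{\V[G]})$ forces the coding relation to have rank of $\V$-cardinality $\aleph_2$). Consequently the maximal antichain deciding the value $\dot\rho(\check\alpha)$ has as many elements as there are candidate values below $\eta$, which need not be at most $\omega_1$, and the bounded forcing axiom cannot be applied to it. The standard proofs avoid naming a rank function into a fixed small ordinal altogether (for instance by certifying well-foundedness on countable initial segments of $\omega_1$, whose relational ranks are countable, and using that every countable subset of $\omega_1^\V$ is bounded). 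A smaller but still real issue: for a filter that is merely directed rather than generic, the decoded structure need not satisfy the $\Delta_0$ matrix $\psi$ that was \emph{forced} about the named structure---bounded quantifiers range over $\omega_1$ many decoded predecessors, and finitely many compatible conditions do not control them all---so one must also include antichains deciding a satisfaction predicate together with witnessing (Skolem) functions; the appeal to $\Sigma_0$-absoluteness between $N^*$ and $\V$ does not by itself bridge this.
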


The following terminology is from \cite{FuchsMinden:SCforcingTreesGenAbs}.

\begin{defn}
\label{defn:Natural}
If $\P$ is a notion of forcing and $p\in\P$ is a condition, then $\P_{{\le}p}$ is the restriction of $\P$ to conditions $q\le p$. Two forcing notions $\P$ and $\Q$ are \emph{equivalent} if they give rise to the same forcing extensions. A forcing class $\Gamma$ is \emph{natural} if for every $\P\in\Gamma$ and every $p\in\P$, there is a $\Q\in\Gamma$ such that $\P_{{\le}p}$ is forcing equivalent to $\Q$.
\end{defn}

\begin{fact}
\label{fact:BFACharacterizationForNaturalGamma}
If $\Gamma$ is natural, then $\BFA_\Gamma$ is equivalent to the following statement:
\claim{$(*)$}{
for every $\P\in\Gamma$ and every filter $G$ that is $\P$-generic over $\V$, it follows that
\[\kla{\HSC,\in}\prec_{\Sigma_1}\kla{\HSC^{\V[G]},\in}.\]}
\end{fact}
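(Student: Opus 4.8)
The plan is to prove the two implications separately, both resting on Bagaria's characterization (Lemma \ref{lem:BagariaForIndividualPoset}). First I would record the two facts that come for free. For any $\P$-generic $G$ over a forcing preserving $\omega_1$ (which is the relevant case for the classes $\Gamma$ under consideration), one has $\HSC\sub\HSC^{\V[G]}$, since a set of hereditary size ${\le}\,\omega_1$ in $\V$ retains hereditary size ${\le}\,\omega_1$ in $\V[G]$; and $\Sigma_1$-formulas are upward absolute between the transitive sets $\HSC$ and $\HSC^{\V[G]}$. Thus the substructure relation and the upward half of $\prec_{\Sigma_1}$ hold automatically, and in both directions the only remaining content is the downward transfer of $\Sigma_1$-truth from $\HSC^{\V[G]}$ to $\HSC$.

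For the implication $(*)\Rightarrow\BFA_\Gamma$, I would fix $\P\in\Gamma$ and, using Lemma \ref{lem:BagariaForIndividualPoset}, reduce $\BFA_\Gamma$ for $\P$ to showing that whenever $\forces_\P(\HSC\models\phi(\check a))$ for a $\Sigma_1$-formula $\phi$ and some $a\in\HSC$, then $\HSC\models\phi(a)$ in $\V$. Passing to a $\P$-generic $G$, the hypothesis yields $\HSC^{\V[G]}\models\phi(a)$, and the instance of $(*)$ for $\P$ and $G$ gives $\HSC\prec_{\Sigma_1}\HSC^{\V[G]}$; since $\phi$ is $\Sigma_1$ and $a\in\HSC$, the downward direction of elementarity yields $\HSC\models\phi(a)$, as required. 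This direction uses neither naturalness nor anything beyond Bagaria's lemma.

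The reverse implication $\BFA_\Gamma\Rightarrow(*)$ is where the hypothesis that $\Gamma$ is natural does the essential work, and this is the step I expect to be the crux. Fixing $\P\in\Gamma$, a $\P$-generic $G$, a $\Sigma_1$-formula $\phi$, and $a\in\HSC$ with $\HSC^{\V[G]}\models\phi(a)$, I would like to invoke Bagaria's lemma to conclude $\HSC\models\phi(a)$; but that lemma concerns the statement $\forces_\P(\HSC\models\phi(\check a))$, whereas the truth of $\phi(a)$ in $\HSC^{\V[G]}$ is only \emph{witnessed} by some condition $p\in G$, i.e.\ $p\forces_\P(\HSC\models\phi(\check a))$, and in general $\eins_\P$ need not force this. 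The remedy is naturalness: since $\P\in\Gamma$ and $p\in\P$, there is $\Q\in\Gamma$ forcing-equivalent to $\P_{{\le}p}$, and because forcing-equivalent posets yield the same extensions one has $\forces_\Q(\HSC\models\phi(\check a))$. Now $\Q\in\Gamma$, so $\BFA_\Gamma$ together with Lemma \ref{lem:BagariaForIndividualPoset} applied to $\Q$ delivers $\HSC\models\phi(a)$ in $\V$, completing the downward direction and hence $(*)$. The one point requiring care is the bookkeeping that the forced statement ``$\HSC\models\phi(\check a)$'' depends only on the generic extension and on the check-name $\check a$, so that it genuinely transfers across the forcing equivalence between $\P_{{\le}p}$ and $\Q$.
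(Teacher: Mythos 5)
Your proof is correct and follows essentially the same route as the paper: the forward direction is the immediate consequence of Bagaria's lemma, and the converse is exactly the paper's argument of picking $p\in G$ forcing $\phi(\check a)$ to hold in $\HSC$, replacing $\P_{{\le}p}$ by an equivalent $\Q\in\Gamma$ via naturalness, and applying Lemma \ref{lem:BagariaForIndividualPoset} to $\Q$. (Your parenthetical caveat about $\omega_1$-preservation is unnecessary: $\HSC^\V\sub\HSC^{\V[G]}$ holds for any set forcing, since hereditary cardinality can only decrease in the extension.)
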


\begin{proof}
Clearly, $(*)$ implies the condition stated in Lemma \ref{lem:BagariaForIndividualPoset}, which is equivalent to the condition stated in Definition \ref{defn:BFA}. For the converse, let $\P$, $G$ be as in $(*)$. Let $\phi(x)$ be a $\Sigma_1$-formula in the language of set theory, $a\in\HSC$, and assume that
$\kla{\HSC^{\V[G]},\in}\models\phi(a)$. Let $p\in G$ force this.
It suffices to show that $\kla{\HSC,\in}\models\phi(a)$. Let $\Q\in\Gamma$ be equivalent to $\P_{{\le}p}$. Then, $\Q$ forces that $\phi(\check{a})$ holds in $\HSC$ (of the forcing extension). Thus, since the bounded forcing axiom for $\Q$ holds, it follows from Lemma \ref{lem:BagariaForIndividualPoset} that $\kla{\HSC,\in}\models\phi(a)$.
\end{proof}

Most commonly encountered forcing classes are natural. In particular, it follows from \cite[Corollary 3.11]{FuchsRinot:WeakSquareStationaryReflection} and \cite[Observation 2.4]{Fuchs:ParametricSubcompleteness} that the class of subcomplete forcing notions is natural.

Here is a version of Corollary \ref{cor:bfMPSC+NoIMwithAnIAlimitOfMeasurablesImpliesWO} with a more restrictive anti-large cardinal assumption, but with \BSCFA{} in place of $\bfMPSC$.

\begin{lem}
\label{lem:BSCFA+NoZeroSharpImpliesWOofOmega1}
Suppose $0^\#$ does not exist, and that $\BSCFA$ holds. Then there is a well-order of $\power(\omega_1)$ of order type $\omega_2$, $\Delta_1$-definable without parameters. This well-order is in $L(\power(\omega_1))$, is $\Delta_1$-definable from a subset $\bI$ of $\omega_1$ there, and it is $\Delta_1^{\kla{H_{\omega_2},\in}}$-definable in $\bI$.
\end{lem}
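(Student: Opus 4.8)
The plan is to follow the coding strategy of Theorem \ref{thm:WOofPomega1FrombfMPSCandSetManyGrounds} and Corollary \ref{cor:bfMPSC+NoIMwithAnIAlimitOfMeasurablesImpliesWO}, but with $L$ in the role of the mantle and with the reflection step carried out through the $\Sigma_1$-elementarity supplied by $\BSCFA$ rather than the full reflection of $\bfMPSC$. Here I would invoke Fact \ref{fact:BFACharacterizationForNaturalGamma} together with the naturalness of subcomplete forcing, which gives $\kla{\HSC,\in}\prec_{\Sigma_1}\kla{\HSC^{\V[G]},\in}$ for every subcomplete extension $\V[G]$. Since $0^\#$ does not exist, Jensen's covering lemma holds, and this lets me replace the parameter built from indiscernibles in Corollary \ref{cor:bfMPSC+NoIMwithAnIAlimitOfMeasurablesImpliesWO} by a single subset $\bI$ of $\omega_1$ coding, via the Mostowski collapse of an $L$-cover, a canonical sequence of coding points.

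First I would fix the coding points and the parameter. Let $I$ enumerate the first $\omega_1$ many GCH survivors (Definition \ref{defn:GCHsurvivor}) above $\omega_1$; this is a parameter-free definable set. By covering there is an $L$-set $Z$ with $I\sub Z$ and $\card Z=\omega_1$; taking the $<_L$-least such $Z$, collapsing it, letting $\bar I$ be the image of $I$ (a subset of an ordinal below $\omega_2$) and letting $\bI\sub\omega_1$ code $\bar I$ together with the collapse, I arrange that $I$ is recoverable inside the forcing-absolute model $L[\bI]$ as, say, its $\alpha_0$-th element. Crucially, by condensation any set $J$ of size $\omega_1$ arising this way already appears at a level $L_\gamma[\bI]$ with $\gamma<\omega_2$; this is what makes the coding expressible at the $\Sigma_1$ level over $\HSC$, as required for $\BSCFA$.

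Next, given $A\sub\omega_1$, I would code it as in the proof of Theorem \ref{thm:WOofPomega1FrombfMPSCandSetManyGrounds}: an at-least-countably-closed Easton preparation forcing $\GCH$ below $\sup I$ while keeping the coding points regular, followed by the extended Namba forcing of Theorem \ref{thm:ExtendedNambaForcing} (in the version for GCH survivors not requiring an inaccessible, as used in the proof of Theorem \ref{thm:MPSCgiveWOofR}) that sets $\cf(\kappa_i)=\omega_1$ iff $i\in A$ and collapses $\sup I$ to $\omega_2$. The composite is subcomplete and adds no reals, so the prescribed cofinalities are robust. In $\V[G]$ the coding points lie below $\omega_2$, their cofinalities are witnessed inside $\HSC^{\V[G]}$, and $J=I$ is identified by an $L_\gamma[\bI]$ with $\gamma<\omega_2^{\V[G]}$. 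Hence the statement $\phi(A,\bI)$, asserting that there exist $\alpha$ and $J$ such that $J$ is the $\alpha$-th element of $L[\bI]$, $\otp(J)\ge\omega_1$, and for all $i<\omega_1$, $i\in A$ iff $\cf((J)_i)=\omega_1$ and $i\notin A$ iff $\cf((J)_i)=\omega$, is $\Sigma_1$ over $\kla{\HSC^{\V[G]},\in}$ with parameters $A,\bI$. By $\BSCFA$ it reflects to $\V$. Setting $\alpha_A$ to be the least witnessing $\alpha$ and $A<^* B$ iff $\alpha_A<\alpha_B$ then yields the well-order; that $\alpha_A<\omega_2$ and that $<^*$ has order type $\omega_2$ follow as in the earlier proofs, collapsing a witness with $\Col{\omega_1,\alpha_A}$ (countably closed, hence subcomplete) and reflecting once more.

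The main work, and where I expect the real obstacle, is the definability bookkeeping. For the $\Delta_1^{\kla{\HSC,\in}}$-in-$\bI$ claim I would show that ``$\alpha=\alpha_A$'' is both $\Sigma_1$ and $\Pi_1$ over $\HSC$: for the relevant coding points $\cf=\omega_1$ is equivalent both to the $\Sigma_1$ existence of a cofinal $\omega_1$-sequence and to the $\Pi_1$ nonexistence of a cofinal $\omega$-sequence, and minimality over $\alpha'<\alpha$ can be verified inside a single witnessing $L_\gamma[\bI]\in\HSC$, turning the quantifier over $\alpha'$ into a bounded one. Since $\HSC$, $\bI$ and the pertinent $L[\bI]$-levels all lie in $L(\power(\omega_1))$, the same definition places $<^*$ there and makes it $\Delta_1$-definable from $\bI$. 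Finally, for parameter-freeness I would note that $\bI$ is itself parameter-free definable over $\V$ (from $I$, its $<_L$-least $L$-cover, and the collapse) and that ``$x\in\HSC$'' is $\Delta_1$, so substituting the definition of $\bI$ into the $\Delta_1$-over-$\HSC$ formula yields a parameter-free $\Delta_1$ definition of $<^*$ over $\V$. The points I would watch most carefully are: that the extended Namba forcing is genuinely available over GCH survivors with no inaccessible, so that no large-cardinal strength is smuggled in against the hypothesis that $0^\#$ fails; that condensation really keeps the witnessing $L[\bI]$-levels below $\omega_2$, so that the coding is authentically $\Sigma_1$ over $\HSC$; and that relativizing the $\HSC$-formula to $\V$ and inserting the definition of $\bI$ does not push the complexity above $\Delta_1$.
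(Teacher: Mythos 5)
Your proposal is correct and follows essentially the same route as the paper's proof: use Jensen covering (from $\neg 0^\#$) to produce the $L$-cover $Z$, its collapse, and the parameter $\bI\sub\omega_1$; code $A$ into the cofinality $\omega/\omega_1$ pattern on the first $\omega_1$ GCH survivors via the Easton preparation plus extended Namba forcing; observe that the resulting statement is $\Sigma_1$ over $H_{\omega_2}$ and reflect it with $\BSCFA$; and then well-order $\power(\omega_1)$ by the $L[\bI]$-least (equivalently, least-ranked) witness. The paper likewise finishes with a collapse of $\sup(x)$ to make the witness visible in $H_{\omega_2}$ and handles the $L(\power(\omega_1))$ and $\Delta_1$ claims exactly as you outline.
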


\begin{proof}
Following the argument in the proof of Corollary \ref{cor:bfMPSC+NoIMwithAnIAlimitOfMeasurablesImpliesWO},
let $\seq{\kappa_i}{i<\omega_1}$ enumerate the first $\omega_1$ many GCH survivors greater than $\omega_1$. Let $Z\in L$ be the $L$-least set of ordinals such that $I:=\{\kappa_i\st i<\omega_1\}\sub Z$ and $Z$ has cardinality $\omega_1$. Let $\pi_Z:Z\To\otp(Z)$ be the Mostowski-collapse of $Z$, and let $\bar{I}=\pi_Z``I$.

For a set of ordinals $x$, let $(x)_i$ be the $i$-th element of $x$ in its monotone enumeration.
Let $A\sub\omega_1$ be given. Then, for some forcing extension $\V'$ of $\V$ by some subcomplete forcing, the statement $\phi(A,\bI)=$``there is a set of ordinals $x\in L[\bI]$ such that for all $i<\omega_1$, $i\in A$ iff $\cf((x)_i)=\omega_1$, and $i\notin A$ iff $\cf((x)_i)=\omega$'' holds in $H_{\omega_2}^{\V'}$. To see this, note that $I\in L[\bI]$, since $I$ is coded by $Z$ and $\bI$, and we can force $\GCH$ up to $\sup(I)$, passing to $\V[G]$, and then use Jensen's extended Namba forcing to code $A$ into the cofinality $\omega$/$\omega_1$-pattern on $I$, thus producing a forcing extension $\V[G][H]$ in which
$\phi(A,\bI)$ holds (as witnessed by $x=I$). In a last step, if necessary, we can force over $\V[G][H]$ with $\Col{\omega_1,\sup(x)}$, where $x$ is the $L[\bI]$-least witness to $\phi(A,\bI)$, reaching $\V'$, so that the truth of $\phi(A,\bI)$ is already visible in $H_{\omega_2}^{\V'}$. It's easy to see that $\phi$ can be written as a $\Sigma_1$ formula. So, by \BSCFA, it follows that $H_{\omega_2}\models\phi(A,\bI)$. This can be done for every $A\sub\omega_1$.
We can thus let $f(A)$ be the $L[\bI]$-least $x$ witnessing that $\phi(A,\bI)$ holds. Then $A<B$ iff $f(A)<f(B)$ is a well-order of $\power(\omega_1)$, and since for every $A$, $f(A)<\omega_2$, the order type of that well-order is $\omega_2$. Since $\bI$ is definable without parameters, this well-order is definable without parameters.

Note that for any $\gamma<\omega_2$, ${}^{\omega_1}\gamma\sub L(\power(\omega_1))$ and that for any $A$, the $L[\bI]$-least $x$ witnessing that $\phi(A,\bI)$ holds already exists in $L[\bI]^{H_{\omega_2}}$. It follows that $L(\power(\omega_1))$ is correct about the cofinality of $(x)_i$, for $i<\omega_1$, and hence that in $L(\power(\omega_1))$, the well-ordering of $\power(\omega_1)$ described above is definable from $\bI$.
\end{proof}

Paralleling the treatment of $\bfMPSC$ in the previous section, let's draw a conclusion which is related to the question which forcing notions preserve \BSCFA. It's easy to see that subcomplete forcing notions that don't add subsets of $\omega_1$ (and thus don't change $H_{\omega_2}$) preserve \BSCFA. One might hope that the requirement of not adding subsets of $\omega_1$ can be dropped, but the following lemma shows that this is not the case, at least in the absence of $0^\#$, and if one insists that the forcing preserve $\omega_2$.

\begin{lem}
\label{lem:NonPreservationOfBSCFA}
Assume $0^\#$ does not exist. Suppose $N$ is a set-forcing extension of $\V$ with $\omega_2=\omega_2^N$, and that $N$ satisfies \BSCFA. Then $\power(\omega_1)^\V=\power(\omega_1)^N$.
\end{lem}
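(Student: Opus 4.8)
The plan is to mirror the proof of Corollary \ref{cor:PreservingbfMPSC}, with the forcing-invariant inner model $\Mantle$ replaced by $L$ and Usuba's theorem replaced by Jensen's covering lemma. Write $N=\V[g]$ with $g$ generic over $\V$ for some $\P\in\V$, and put $\delta=\card{\P}$. From $\omega_2^\V=\omega_2^N$ we get $\omega_1^\V=\omega_1^N$, and since no set forcing can create $0^\#$, the hypothesis yields that $0^\#$ does not exist in $N$ either, so the coding machinery of Lemma \ref{lem:BSCFA+NoZeroSharpImpliesWOofOmega1} (in particular, covering) is available inside $N$. Fix $A\in\power(\omega_1)^N$; the aim is to show $A\in\V$, and the reverse inclusion $\power(\omega_1)^\V\sub\power(\omega_1)^N$ is trivial.

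The one genuine difference from the mantle argument is that the coding points of Lemma \ref{lem:BSCFA+NoZeroSharpImpliesWOofOmega1} are GCH survivors, which are \emph{not} forcing-invariant, so the canonical coding set computed in $N$ need not lie in $\V$. I would sidestep this by pushing the coding points above the forcing. Working in $N$, let $I$ enumerate the first $\omega_1$ many GCH survivors greater than $\card{\P}^+$. Since $\P$ is small, it preserves cardinals and cofinalities above $\delta^+$ and fixes $2^\lambda$ for $\lambda\ge\delta$; consequently ``$\kappa$ is a GCH survivor'' is absolute between $\V$ and $N$ for $\kappa>\delta^+$, so $I$ is computed identically in the two models, and in particular $I\in\V$. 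Now apply covering in $N$ to obtain the $L$-least $Z$ with $I\sub Z$ and $\card{Z}=\omega_1$; as $Z\in L\sub\V$ the collapse $\pi_Z$ lies in $\V$, and $Z$ is in fact the same set whether computed in $\V$ or in $N$, since its defining condition $I\sub Z\wedge\otp(Z)\in[\omega_1,\omega_2)$ is absolute between the two models. Hence $\bI=\pi_Z``I\in\V$ and $I\in L[\bI]$.

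With $\bI\in\V$ secured, the coding step of Lemma \ref{lem:BSCFA+NoZeroSharpImpliesWOofOmega1} runs inside $N$: forcing $\GCH$ up to $\sup(I)$, then Jensen's extended Namba forcing to code $A$ into the cofinality $\omega/\omega_1$-pattern on $I$, and finally a collapse of $\sup(I)$ below $\omega_2$, all subcomplete, witnesses that the $\Sigma_1$ statement $\phi(A,\bI)$ asserting the existence of $x\in L[\bI]$ whose cofinality pattern codes $A$ can be forced to hold in $\HSC$ of a subcomplete extension of $N$. By $\BSCFA$ in $N$ (via Lemma \ref{lem:BagariaForIndividualPoset}) it already holds in $\HSC^N$, so there is a witness $x\in L[\bI]\cap\HSC^N$. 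Since $\bI\in\V$ we have $L[\bI]^N=L[\bI]^\V\sub\V$, whence $x\in\V$. Each $(x)_i$ is below $\omega_2^N=\omega_2^\V$, so $\cf^\V((x)_i)=\cf^N((x)_i)$ by the same case analysis as in Corollary \ref{cor:PreservingbfMPSC}: a cofinal $\omega$-sequence in one model is cofinal in the other, while $\cf^N((x)_i)=\omega$ together with $\cf^\V((x)_i)=\omega_1$ would make $\omega_1^\V$ singular in $N$. Therefore $A=\{i<\omega_1\st\cf^\V((x)_i)=\omega_1\}$ is computed in $\V$ from $x\in\V$, giving $A\in\V$.

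The main obstacle is precisely the step that the mantle argument obtains for free: arranging that the non-invariant coding set $\bI$ lands in $\V$. Choosing the coding points above $\card{\P}^+$ (so that GCH-survivorhood is absolute there by smallness of $\P$) and invoking covering to realize $I$ inside an $L$-set is what makes this go through; everything else is a faithful transcription of Corollary \ref{cor:PreservingbfMPSC} and Lemma \ref{lem:BSCFA+NoZeroSharpImpliesWOofOmega1}. I would expect the verification of GCH-survivor absoluteness above $\card{\P}^+$ to be the only point requiring care.
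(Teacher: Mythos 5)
Your proposal is correct and follows essentially the same route as the paper: choose the coding points (GCH survivors) above $\card{\P}$ so they survive into $N$, use $\neg 0^\#$ and covering to place them into an $L$-set $Z$ and form $\bI$, run the argument of Lemma \ref{lem:BSCFA+NoZeroSharpImpliesWOofOmega1} inside $N$ to obtain a witness $x\in L[\bI]$, $x\sub\omega_2$, and recover $A$ in $\V$ from $x$ using the agreement of cofinalities below $\omega_2=\omega_2^N$. The only (cosmetic) difference is that the paper defines $I$ in $\V$ and merely needs that its elements remain GCH survivors in $N$, whereas you define $I$ in $N$ and argue the set coincides with the one computed in $\V$ -- both rest on the same absoluteness of GCH-survivorhood above the size of the forcing, which the paper treats as routine.
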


\begin{proof}
Observe that $\omega_1^\V=\omega_1^N$. Let $N=\V[g]$, where $g$ is generic for $\P$ over $\V$. Let $\seq{\kappa_i}{i<\omega_1}$ enumerate the next $\omega_1$ many GCH survivors greater than the cardinality of $\P$. Then every $\kappa_i$ is a GCH survivor in $N$ as well. Let $I:=\{\kappa_i\st i<\omega_1\}\sub Z\in L$, $Z\sub\On$, and let $\bI=\pi_Z``I$, where, as before, $\pi_Z$ is the Mostowski-collapse of $Z$.

Let $A\sub\omega_1$, $A\in N$. I have to show that $A\in\V$. Working in $N$, the argument of the proof of Lemma \ref{lem:BSCFA+NoZeroSharpImpliesWOofOmega1} shows that there is a set of ordinals $x\in L[\bI]$, $x\sub\omega_2$, such that for all $i<\omega_1$, $i\in A$ iff $\cf((x)_i)=\omega_1$, and $i\notin A$ iff $\cf((x)_i=\omega)$. But note that if $\gamma<\omega_2$, then $\cf(\gamma)^\V=\cf(\gamma)^N$, $\omega_1^\V=\omega_1^N$. Hence, $A$ can be defined from $x$ in $\V$.
\end{proof}

It is possible to produce a forcing extension of a model of $\BSCFA+\neg 0^\#$, preserving $\omega_1$, and adding subsets of $\omega_1$, to reach another model of \BSCFA, but collapsing $\omega_2$. To see this, recall the concept of a reflecting cardinal: a regular cardinal $\kappa$ is reflecting if for every formula $\phi(x)$ and every $a\in H_\kappa$, if there is a cardinal $\gamma>\kappa$ such that $\kla{H_\gamma,\in}\models\phi(a)$, then there is a cardinal $\bgamma<\kappa$ such that $a\in H_\bgamma$ and $\kla{H_\bgamma,\in}\models\phi(a)$.
Reflecting cardinals were introduced in \cite{GoldsternShelah:BPFA}, where it was shown that the consistency strength of \BPFA{} is precisely a reflecting cardinal. This was extended in \cite{Fuchs:HierarchiesOfForcingAxioms} to \BSCFA. In detail, it was shown there that $\BSCFA$ implies that $\omega_2$ is reflecting in $L$, and that if $\kappa$ is reflecting, then there is a subcomplete forcing $\P$ that's $\kappa$-c.c., has size $\kappa$, collapses $\kappa$ to become $\omega_2$, and such that $\BSCFA$ holds in $\V^\P$. Thus, if we start in a model of set theory with two reflecting cardinals, $\kappa<\delta$, in which we may assume $0^\#$ does not exist, then we may use $\kappa$ to reach a forcing extension $M$ in which $\BSCFA$ holds, $\delta$ still is reflecting, and $0^\#$ does not exist. Now we can use the reflecting cardinal $\delta$ in $M$ to force $\BSCFA$ again, collapsing $\omega_2$.
So the requirement that $\omega_2$ be preserved in the previous lemma is necessary.

It is obvious that in the previous two lemmas, the assumption that $0^\#$ does not exist can be replaced with the weaker assumption that there is some $x\sub\omega$ such that $x^\#$ does not exist. 

\section{More reflection, or: what is the bounded forcing axiom for countably closed forcing?}
\label{sec:MoreReflection}

The most obvious way to try to obtain the consequences of Theorem \ref{thm:WOofPomega1FrombfMPSCandSetManyGrounds}, with the assumption of $\bfMPSC$ weakened to a form of the bounded forcing axiom, would seem to be to replace $\Sigma_1$-elementarity in Fact \ref{fact:BFACharacterizationForNaturalGamma} with $\Sigma_2$-elementarity. This motivates the following definition. I will analyze the resulting principles, and propose an answer to the question stated in the section title.

\begin{defn}
\label{defn:SigmaNBFAGamma}
Let $\Gamma$ be a natural forcing class and $n\in\omega$. Then the principle $\Sigma_n\text{-}\BFA_\Gamma$ says that whenever $G$ is generic for some $\P\in\Gamma$, it follows that
\[\kla{\HSC,\in}\prec_{\Sigma_n}\kla{\HSC,\in}^{\V[G]}.\]
\end{defn}

However, if the forcing class in question contains the class of ccc forcing notions, then the resulting principle is inconsistent, for $n\ge 2$.

\begin{obs}
\label{obs:InconsistencyOfSigma2-BFAccc}
Let $\Gamma$ be a natural forcing class.
\begin{enumerate}[label=(\arabic*)]
  \item
  \label{item:NewRealsImplyNonCH}
  If there is a $\P\in\Gamma$ that necessarily adds a real, then $\BFA_\Gamma$ implies the failure of $\CH$.
  \item
  \label{item:BFAcccImpliesSH}
  $\BFA_\ccc$ implies Souslin's hypothesis, i.e., that there is no Souslin tree.
  \item
  \label{item:Sigma2-BFAcccInconsistent}
  $\Sigma_2\text{-}\BFA_\ccc$ is inconsistent.
\end{enumerate}
\end{obs}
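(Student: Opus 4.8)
For part (1), the plan is to argue by contradiction using the characterization from Fact~\ref{fact:BFACharacterizationForNaturalGamma}, which applies since $\Gamma$ is natural: $\BFA_\Gamma$ gives $\kla{\HSC,\in}\prec_{\Sigma_1}\kla{\HSC^{\V[G]},\in}$ for every $G$ generic for a member of $\Gamma$. Assume $\CH$, and let $\P\in\Gamma$ necessarily add a real. Under $\CH$, fix in $\V$ an enumeration $e=\seq{r_\xi}{\xi<\omega_1}$ of $\power(\omega)$; as a function with domain $\omega_1$ and range of size $\omega_1$, this is an element of $\HSC$. I would then consider the formula $\sigma(e)$ asserting that there is a real $r$ and a function $g\colon\omega_1\to\omega$ such that $r(g(\xi))\ne r_\xi(g(\xi))$ for all $\xi<\omega_1$. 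The point is that, since $\omega_1\in\HSC$, the trailing clause ``for all $\xi<\omega_1$'' is a bounded quantifier, so $\sigma(e)$ is genuinely $\Sigma_1$ over $\HSC$ in the parameter $e$. In $\V[G]$ there is a real $r\notin\V$, and setting $g(\xi)$ to be least with $r(g(\xi))\ne r_\xi(g(\xi))$ witnesses $\sigma(e)$ in $\HSC^{\V[G]}$. By $\Sigma_1$-elementarity, $\sigma(e)$ then holds in $\HSC$, producing a real $r\in\V$ differing from every $r_\xi$; but $e$ enumerates \emph{all} reals of $\V$, a contradiction. Hence $\neg\CH$.

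For part (2), suppose toward a contradiction that $T$ is a Souslin tree in $\V$. As a forcing notion $T$ is $\ccc$ (its antichains are countable), so the characterization of $\BFA_\ccc$ from Fact~\ref{fact:BFACharacterizationForNaturalGamma} applies and yields $\kla{\HSC,\in}\prec_{\Sigma_1}\kla{\HSC^{\V[T]},\in}$. Forcing with $T$ adjoins a cofinal branch, so $\HSC^{\V[T]}$ satisfies the statement, with parameter $T$, that $T$ has a chain of size $\omega_1$; this is $\Sigma_1$ over $\HSC$ (a chain witnessed together with a surjection of $\omega_1$ onto it). By $\Sigma_1$-elementarity this reflects back to $\V$, so $T$ has an uncountable chain in $\V$, contradicting that $T$ is Souslin. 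Thus there is no Souslin tree, which is Souslin's hypothesis.

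For part (3), I would combine the two previous items. First, $\Sigma_2\text{-}\BFA_\ccc$ implies $\BFA_\ccc$, since $\Sigma_2$-elementarity entails $\Sigma_1$-elementarity; by part (2) it follows that Souslin's hypothesis holds, so $\V$ contains no Souslin tree. Next, observe that ``there is a Souslin tree'' is $\Sigma_2$ over $\HSC$: being Souslin is $\Pi_1$ (a tree of height $\omega_1$ with countable levels having neither an uncountable chain nor an uncountable antichain), and this is prefixed by a single existential quantifier over objects of size $\omega_1$. Now I invoke Shelah's theorem that adding a single Cohen real adds a Souslin tree (any provably Souslin-tree-adding $\ccc$ forcing would serve equally well): letting $G$ be Cohen-generic, $\V[G]$ is a $\ccc$ extension in which $\HSC^{\V[G]}$ satisfies ``there is a Souslin tree''. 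By the $\Sigma_2$-elementarity granted by $\Sigma_2\text{-}\BFA_\ccc$, this reflects to $\HSC$, producing a Souslin tree in $\V$ and contradicting Souslin's hypothesis. Hence $\Sigma_2\text{-}\BFA_\ccc$ is inconsistent.

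The two places that require care are the complexity bookkeeping in part (1)—one must exploit that the quantifier over $\xi<\omega_1$ is bounded in $\HSC$ so that $\sigma$ stays $\Sigma_1$ rather than $\Sigma_2$—and, in part (3), the reliance on the nontrivial fact that a $\ccc$ forcing can add a Souslin tree. The latter is precisely what makes the $\Sigma_2$-reflection bite, by creating, in a $\ccc$ extension, a $\Sigma_2$ witness that $\BFA_\ccc$ has already outlawed in $\V$; this is the main obstacle in the sense that the whole argument hinges on having a $\Sigma_2$ (rather than merely $\Sigma_1$) property that is both $\ccc$-forceable and refuted by $\BFA_\ccc$.
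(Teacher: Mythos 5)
Your proof is correct and follows essentially the same route as the paper's: for (1) reflect a $\Sigma_1$ witness that there is a new real, for (2) reflect the $\Sigma_1$ statement that the Souslin tree acquires an uncountable chain after forcing with it, and for (3) use that a ccc (Cohen) forcing adds a Souslin tree, whose existence is $\Sigma_2$ over $H_{\omega_2}$, contradicting (2). The only difference is cosmetic: in (1) the paper simply takes $a=\power(\omega)$ as the parameter (an element of $H_{\omega_2}$ under \CH) and reflects the $\Sigma_1$ statement ``there is an $x\subseteq\omega$ with $x\notin a$,'' which avoids your enumeration-and-disagreement-function bookkeeping.
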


\begin{proof}
For \ref{item:NewRealsImplyNonCH}, assume $\BFA_\Gamma$, and suppose $\P\in\Gamma$ adds a real. Assume, towards a contradiction, that $\CH$ holds. Then $a=\power(\omega)\in\HSC$. If $G$ is $\P$-generic, then the $\Sigma_1$-statement $\phi(a)$, expressing ``there is an $x\sub\omega$ with $x\notin a$'' holds in $\kla{\HSC,\in}^{\V[G]}$, but not in $\kla{\HSC,\in}^\V$, a contradiction.

\ref{item:BFAcccImpliesSH} is clear, because if $T$ were a Souslin tree, then, viewing $T$ as a notion of forcing in the usual way, $T$ is ccc, and if $b\sub T$ is $T$-generic, then the $\Sigma_1$-statement ``there is a function $f:\omega\To T$ such that for all $\alpha<\beta<\omega_1$, $f(\alpha)<_Tf(\beta)$'' in the parameters $T$ and $\omega_1$ holds in $\kla{\HSC,\in}^{\V[G]}$, but of course not in $\kla{\HSC,\in}^\V$.

\ref{item:Sigma2-BFAcccInconsistent} now follows because a Souslin tree can be added by ccc forcing (for example, Cohen forcing adds a Souslin tree). But if $\V[G]$ has a Souslin tree, then this can be expressed as a $\Sigma_2$ statement in $\kla{\HSC,\in}^{\V[G]}$. But then, $\Sigma_2\text{-}\BFA_\ccc$ would imply that there is a Souslin tree in $\V$, contradicting \ref{item:BFAcccImpliesSH}.
\end{proof}

The situation with countably closed forcing is different, though. It is well-known, and easy to see, that the full version of Martin's axiom for countably closed forcing is provable in \ZFC: if $\P$ is countably closed and $\mathcal{D}$ is a collection of $\omega_1$ many maximal antichains in $\P$, then there is a filter in $\P$ that meets each antichain. Hence, forcing axioms for countably closed forcing have not been considered, with the exception of the ``$+$''-versions, introduced in \cite{FMS:MM1}.

I will argue that the $\Sigma_2\text{-}\BFA_\CC$ is the correct version of the bounded forcing axiom for countably closed forcing. First, note that the class of countably closed forcing notions is natural. It thus follows from Fact \ref{fact:BFACharacterizationForNaturalGamma} that:

\begin{fact}
\label{fact:Sigma1AbsolutenessForCC}
Whenever $\P$ is a countably closed forcing notion and $G$ is $\P$-generic, then
\[\kla{\HSC,\in}\prec_{\Sigma_1}\kla{\HSC,\in}^{\V[G]}.\]
\end{fact}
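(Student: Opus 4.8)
The plan is to obtain this as an immediate consequence of Fact \ref{fact:BFACharacterizationForNaturalGamma}, applied to the forcing class $\Gamma=\CC$ of countably closed notions. The conclusion asserted here is literally the statement $(*)$ of Fact \ref{fact:BFACharacterizationForNaturalGamma} instantiated at $\Gamma=\CC$, so it suffices to verify the two hypotheses that make that characterization available: that $\CC$ is natural, and that $\BFA_\CC$ holds outright.

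Naturalness is straightforward and was already noted above: if $\P$ is countably closed and $p\in\P$, then $\P_{{\le}p}$ is again countably closed, since a lower bound of a descending $\omega$-sequence of conditions below $p$ is itself below $p$. Thus one may simply take $\Q=\P_{{\le}p}$ in Definition \ref{defn:Natural}, so $\CC$ is natural.

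For $\BFA_\CC$, I would appeal to the full Martin's axiom for countably closed forcing recorded above, which is provable in \ZFC. Concretely, given a countably closed $\P$ and $\omega_1$ many maximal antichains to be met, one recursively constructs a descending sequence $\seq{p_\xi}{\xi<\omega_1}$ of conditions, at each successor step extending into the next antichain (possible by maximality) and at each countable limit step using countable closure along a cofinal $\omega$-subsequence to secure a lower bound; the filter generated by this sequence meets every antichain. Restricting to maximal antichains of the Boolean completion of size at most $\omega_1$, as in Definition \ref{defn:BFA}, this yields $\BFA_\CC$.

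With both hypotheses in hand, Fact \ref{fact:BFACharacterizationForNaturalGamma} gives that $\BFA_\CC$ is equivalent to $(*)$ for $\Gamma=\CC$, and since $\BFA_\CC$ is a theorem of \ZFC, the statement $(*)$ holds; this is exactly the displayed conclusion $\kla{\HSC,\in}\prec_{\Sigma_1}\kla{\HSC,\in}^{\V[G]}$. I do not expect any genuine obstacle here, as the argument is a direct specialization of the already-established characterization. The only point requiring a moment's attention is the routine translation between the antichain formulation of the bounded forcing axiom in Definition \ref{defn:BFA}, phrased for the Boolean completion, and the filter-building argument phrased directly in $\P$, which for countably closed $\P$ poses no difficulty.
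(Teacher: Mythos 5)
Your proposal is correct and matches the paper's argument exactly: the paper likewise observes that the full Martin's axiom for countably closed forcing is a theorem of \ZFC, notes that the class of countably closed forcing notions is natural, and then derives the fact from Fact \ref{fact:BFACharacterizationForNaturalGamma}. Your filled-in details (the descending $\omega_1$-sequence construction and the translation between antichains in the Boolean completion and dense sets in $\P$) are the standard ones and are sound.
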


Thus, the axiom $\Sigma_2\text{-}\BFA_\CC$ says that we have one more level of absoluteness than \ZFC guarantees. This is what $\BFA_\Gamma$ says for the classes of ccc or proper forcing as well. In this sense, $\Sigma_2\text{-}\BFA_\CC$ seems to be a good candidate for the ``correct'' version of the bounded forcing axiom for countably closed forcing. Another requirement is of course that it should be consistent, from adequate large cardinal assumptions. Recall that it was shown in Goldstern-Shelah \cite{GoldsternShelah:BPFA} that the consistency strength of the bounded proper forcing axiom is a reflecting cardinal.

\begin{defn}[{\cite[Def.~2.2]{GoldsternShelah:BPFA}}]
\label{def:Reflecting}
A regular cardinal $\kappa$ is \emph{reflecting} if for every $a\in H_\kappa$, and every formula $\phi(x)$, the following holds: if there is a regular cardinal $\theta\ge\kappa$ such that $H_\theta\models\phi(a)$, then there is a cardinal $\bar{\theta}<\kappa$ such that $H_{\btheta}\models\phi(a)$.
\end{defn}

I showed in \cite{Fuchs:HierarchiesOfForcingAxioms} that the consistency strength of \BSCFA is also a reflecting cardinal. The following theorem thus supports very strongly the claim that the axiom $\Sigma_2\text{-}\BFA_\CC$ is the correct version of the bounded forcing axiom for countably closed forcing.

\begin{thm}
\label{thm:ConsistencyStrengthOfSigma2BFACC}
The consistency strength of $\Sigma_2\text{-}\BFA_\CC$ is a reflecting cardinal. More precisely:
\begin{enumerate}[label=(\arabic*)]
\item
\label{item:ForcingDirection}
If $\kappa$ is a reflecting cardinal and $G$ is generic for $\Col{\omega_1,{{<}\kappa}}$, then the principle $\Sigma_2\text{-}\BFA_\CC$ holds in $\V[G]$.
\item
\label{item:InnerModelDirection}
The axiom $\Sigma_2\text{-}\BFA_\CC$ implies that $\omega_2^\V$ is a reflecting cardinal in $L$.
\end{enumerate}
\end{thm}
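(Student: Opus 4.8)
The plan is to prove the two directions separately, using the characterization of the bounded forcing axiom for a natural class in Fact \ref{fact:BFACharacterizationForNaturalGamma} together with the fact (Fact \ref{fact:Sigma1AbsolutenessForCC}) that the $\Sigma_1$-level of generic absoluteness for countably closed forcing is already a \ZFC{} theorem; the reflecting cardinal is needed only to push absoluteness from the $\Sigma_1$- to the $\Sigma_2$-level. Throughout I use that, given $\kla{\HSC,\in}\prec_{\Sigma_1}\kla{\HSC^{\V[G]},\in}$, all $\Pi_1$-formulas are absolute in both directions, so that for $\Sigma_2$-elementarity only the downward direction for $\Sigma_2$-formulas has content: writing a $\Sigma_2$-formula as $\phi(x)=\exists y\,\rho(x,y)$ with $\rho$ being $\Pi_1$, the task is to show that if $\HSC^{\V[G]}\models\phi(a)$ for $a\in\HSC$ then $\HSC\models\phi(a)$.

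For the forcing direction \ref{item:ForcingDirection}, first recall that a reflecting cardinal is inaccessible (via $V_\kappa\prec_{\Sigma_2}V$), so $\Col{\omega_1,{<}\kappa}$ is countably closed, has the $\kappa$-c.c., and forces $\kappa=\omega_2$; write $W=\V[G]$, so $\HSC^W=H_\kappa^W$. Suppose $\Q\in W$ is countably closed, $H$ is $\Q$-generic over $W$, and $\HSC^{W[H]}\models\exists y\,\rho(a,y)$ with $\rho$ being $\Pi_1$ and $a\in\HSC^W$. By the $\kappa$-c.c. there is a $\beta<\kappa$ with $a\in\V[G_\beta]$, where $G_\beta=G\cap\Col{\omega_1,{<}\beta}$, and the passage from $\V[G_\beta]$ to $W[H]$ is effected by the countably closed forcing $R=\Col{\omega_1,[\beta,\kappa)}*\dot\Q$. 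Since reflecting is preserved by the ${<}\kappa$-sized forcing $\Col{\omega_1,{<}\beta}$, $\kappa$ is still reflecting in $\V[G_\beta]$. The key step is to express, in $\V[G_\beta]$, the existence of a countably closed forcing $R$, a name $\dot y$, and a condition forcing ``$\HSC\models\rho(a,\dot y)$'' in the form ``there is a regular $\theta\ge\kappa$ with $H_\theta\models\chi(a)$'', and to reflect this below $\kappa$ using Definition \ref{def:Reflecting}; this produces a countably closed forcing $\bar R$ of size ${<}\kappa$, a name, and a condition $q$ forcing the same statement. As $\bar R$ is countably closed of size ${<}\kappa$ and lies in $\V[G_\beta]$, the universality of the Lévy collapse lets us absorb it: inside $W$ there is a $\bar R$-generic $g$ over $\V[G_\beta]$ with $q\in g$, so that $\HSC^{\V[G_\beta][g]}\models\rho(a,b^*)$ for $b^*=\dot{y}^g$. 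Finally $W$ is a countably closed extension of $\V[G_\beta][g]$ (the collapse quotient remains countably closed), so Fact \ref{fact:Sigma1AbsolutenessForCC} gives $\HSC^{\V[G_\beta][g]}\prec_{\Sigma_1}\HSC^W$, whence the $\Pi_1$-formula $\rho(a,b^*)$ lifts to $\HSC^W$ and $\HSC^W\models\exists y\,\rho(a,y)$, as required.

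For the inner-model direction \ref{item:InnerModelDirection}, let $\kappa=\omega_2^\V$. Since $\V$-cardinals are $L$-cardinals and $\V$-regularity implies $L$-regularity, $\kappa$ is regular in $L$ and $H_\kappa^L=L_\kappa$. Let $a\in L_\kappa$ and a formula $\phi$ be given, and suppose, working in $L$, that there is a regular $\theta\ge\kappa$ with $L_\theta\models\phi(a)$. Force with the countably closed collapse $\Col{\omega_1,\theta}$ to obtain $\V[g]$; then $L_\theta$ is unchanged, has size $\omega_1$, so $L_\theta\in\HSC^{\V[g]}$, and $\theta$ remains regular in $L$. The point is that the statement $\psi(a)$ asserting ``there is an ordinal $\bar\theta$, regular in $L$, with $L_{\bar\theta}\models\phi(a)$'' is $\Sigma_2$ over $\HSC$: ``regular in $L$'' is $\Pi_1$ (it negates the $\Sigma_1$-assertion that some $L$-level contains a cofinal map from a smaller ordinal), so $\psi$ has the shape $\exists\bar\theta\,(\cdots\wedge\Pi_1)$. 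Since $\psi(a)$ holds in $\HSC^{\V[g]}$ and $\V[g]$ is a countably closed extension, $\Sigma_2\text{-}\BFA_\CC$ (in its contentful downward direction) yields $\HSC^\V\models\psi(a)$, i.e.\ there is $\bar\theta<\kappa$, regular in $L$, with $L_{\bar\theta}\models\phi(a)$. As $a\in L_{\bar\theta}$ and $\bar\theta<\kappa$, this is exactly the reflection demanded by Definition \ref{def:Reflecting} relativized to $L$, so $\kappa=\omega_2^\V$ is reflecting in $L$.

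The main obstacle lies in the forcing direction: one is reflecting and forcing a $\Pi_1$-property of the $\HSC$ of a \emph{further} extension, and such $\Pi_1$-over-$\HSC$ statements are not upward absolute, so the forcing-relation computations in the reflected small model, in the intermediate collapse model $\V[G_\beta][g]$, and in $W$ must be shown to cohere. This is precisely where the reflecting cardinal is indispensable: inaccessibility alone recovers only the $\Sigma_1$-level, which is free, whereas reflecting guarantees that the witnessing countably closed forcing can be taken of size ${<}\kappa$ and hence absorbed by the Lévy collapse, with the $\Pi_1$-matrix preserved. The overall strategy parallels Goldstern--Shelah's equiconsistency of the bounded proper forcing axiom with a reflecting cardinal; the remaining routine points are the preservation of reflecting cardinals under ${<}\kappa$-sized forcing, the universality of $\Col{\omega_1,{<}\kappa}$ for countably closed forcings of size ${<}\kappa$, and the fact that the relevant collapse quotients stay countably closed.
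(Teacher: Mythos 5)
Your proof is correct, and for part \ref{item:InnerModelDirection} it coincides with the paper's argument essentially verbatim (collapse $\theta$ to $\omega_1$, express ``$\bar\theta$ is regular in $L$ and $L_{\bar\theta}\models\phi(a)$'' as a $\Sigma_2$ statement over $\HSC$ using that ``$f\in L$'' is $\Sigma_1$, and pull it back). For part \ref{item:ForcingDirection} you follow the same Goldstern--Shelah skeleton as the paper --- use the $\kappa$-c.c.\ to find $\beta<\kappa$ with $a\in\V[G_\beta]$, reflect a forceability statement below $\kappa$, realize the reflected forcing inside $\V[G]$, and lift the $\Pi_1$-matrix by the free $\Sigma_1$-absoluteness of Fact \ref{fact:Sigma1AbsolutenessForCC} --- but the middle steps differ in implementation. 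The paper first absorbs $\Q$ into a longer collapse $\Col{\omega_1,[\kappa,\tau)}$ above $\kappa$, uses weak homogeneity to make the forcing statement about $\Col{\omega_1,[\alpha,\tau)}$ with the empty condition, and reflects \emph{that}; the payoff is that the reflected forcing is itself an initial segment $\Col{\omega_1,[\alpha,\tau')}$ of the L\'evy collapse, whose generic is literally read off from $G$, so no separate absorption lemma is needed at the end. You instead reflect the existence of an \emph{arbitrary} countably closed forcing with a condition and a name forcing the $\Pi_1$-matrix, and then invoke the universality of the L\'evy collapse ($\bar R_{\le q}\times\Col{\omega_1,\lambda}$ is equivalent to $\Col{\omega_1,\lambda}$ for countably closed $\bar R$ of size at most $\lambda$) to realize a $\bar R$-generic containing $q$ inside $W$, with a countably closed quotient. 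Both routes are standard and correct; yours trades the homogeneity argument for the absorption lemma. The one point you should make fully precise is the formulation of the reflected statement: to guarantee that the witnesses found in $H_{\bar\theta}$ (where $\bar\theta$ is merely a cardinal below $\kappa$) really are countably closed and really force the matrix in $\V[G_\beta]$, the statement should be phrased as ``there is a regular $\tau$ with $H_\tau\models(p\forces_R\dots)$'' so that the forcing relation is computed correctly after reflection --- the same device the paper uses, and you rightly flag this coherence issue as the delicate point.
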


\begin{proof}
For \ref{item:ForcingDirection}, let $\kappa$ and $G$ be as described. In $\V[G]$, let $\Q$ be a countably closed forcing notion, and let $H$ be $\Q$-generic over $\V[G]$. Let $a\in\HSC^{\V[G]}$, let $\phi(x)$ be a $\Sigma_2$-formula, and suppose that $\kla{\HSC,\in}^{\V[G][H]}\models\phi(a)$. Since $\kappa$ is inaccessible, $\Col{\omega_1,{<}\kappa}$ is $\kappa$-cc, and it follows that there is some $\alpha<\kappa$ such that if we let $G_\alpha=G\cap\Col{\omega_1,{<}\alpha}$, then $a\in\HSC^{\V[G_\alpha]}$. Let $G_{[\alpha,\kappa)}=G\cap\Col{\omega_1,[\alpha,\kappa)}$.

It is well-known that $H$ can be absorbed into a collapse, meaning that in $\V[G]$, there is a regular cardinal $\tau$ such that $\Col{\omega_1,[\kappa,\tau)}$ is forcing equivalent to $\Q\times\Col{\omega_1,[\kappa,\tau)}$. Thus, if we let $I$ be $\Col{\omega_1,[\kappa,\tau)}$-generic over $\V[G][H]$, then there is an $I^*$ which is generic over $\V[G]$ for $\Col{\omega_1,[\kappa,\tau)}$ such that $\V[G][H][I]=\V[G][I^*]=\V[G_\alpha][G_{[\alpha,\kappa)}][I^*]$. Let $\phi(x)=\exists y \psi(x,y)$, where $\psi(x,y)$ is a $\Pi_1$-formula, and let $b\in\HSC^{\V[G][H]}$ be such that $\HSC^{\V[G][H]}\models\psi(a,b)$. Then by Fact \ref{fact:Sigma1AbsolutenessForCC}, it follows that $\HSC^{\V[G][H][I]}\models\psi(a,b)$ as well, in particular, $\HSC^{\V[G][H][I]}\models\phi(a)$. Since $\V[G][H][I]=\V[G_\alpha][G_{[\alpha,\kappa)}][I^*]$, we have that
\[\HSC^{\V[G_\alpha][G_{[\alpha,\kappa)}][I^*]}\models\phi(a).\]
Now, working in $\V[G_\alpha]$, let $\theta>\kappa$ be a regular cardinal such that in $H_\theta^{\V[G_\alpha]}$, there is a condition $p$ in $G_{[\alpha,\kappa)}\times I^*$ that forces with respect to $\Col{\omega_1,[\alpha,\kappa)}\times\Col{\omega_1,[\kappa,\tau)}\isomorphic\Col{\omega_1,[\alpha,\tau)}$ that in the extension, it is true that $\kla{\HSC,\in}\models\phi(\check{a})$ holds. Actually, since $\Col{\omega_1,[\alpha,\tau)}$ is weakly homogeneous, it follows that the empty condition already forces this. By reflection, there is now a regular cardinal $\theta$ such that in $H_\theta^{\V[G_\alpha]}$, the it is the case that there is a regular cardinal $\tau'$ such that $\Col{\omega_1,[\alpha,\tau')}$ forces that $\phi(\check{a})$ holds in the structure $\kla{\HSC,\in}$, as computed in the extension. Since $\kappa$ is still reflecting in $\V[G_\alpha]$, it follows that there is a regular cardinal $\theta'<\kappa$ such that the same statement is true in $H_{\theta'}^{\V[G_\alpha]}$. Letting $\tau'$ witness this, it then follows that $\phi(a)$ holds in $\HSC^{\V[G_\tau')}$, and as before, this persists to $\HSC^{\V[G]}$, by Fact \ref{fact:Sigma1AbsolutenessForCC}. Thus, $\kla{\HSC,\in}^{\V[G]}\models\phi(a)$, as wished.

For \ref{item:InnerModelDirection}, let $\kappa=\omega_2^\V$. Clearly then, $\kappa$ is a regular cardinal in $L$. To show that it is reflecting in $L$, let $\theta>\kappa$ be a regular cardinal in $L$, $a\in H_\kappa^L=L_\kappa$, and $\phi(a)$ a formula that holds in $H_\theta^L=L_\theta$. Let $G$ be generic for $\Col{\omega_1,\theta}$. Then $\omega_2^{\V[G]}>\theta$, and in $\kla{\HSC,\in}^{\V[G]}$, the statement $\psi(a)$ expressing the following holds: ``there is a $\btheta$ such that $\btheta$ is regular in $L$ and such that $\phi(a)$ holds in $\kla{L_\btheta,\in}$.'' Saying that $\btheta$ is regular in $L$ is equivalent to saying that $\btheta$ is regular in $L_{\omega_2^{\V[G]}}=L^{\HSC^{\V[G]}}$, and this can be expressed in $\HSC^{\V[G]}$ by saying that for every $f$, if $f\in L$, then if $f$ is a function from some $\gamma<\btheta$ to $\btheta$, the range of $f$ is bounded in $\btheta$. Saying that $f\in L$ is a $\Sigma_1$ statement, so the conditional is $\Pi_1$, and it is thus easily seen that $\psi(a)$ can be chosen to be a $\Sigma_2$-formula. Thus, by $\Sigma_2\text{-}\BFA_\CC$, it follows that $\kla{\HSC,\in}^\V\models\psi(a)$, and if we let $\btheta$ witness this, then $\btheta<\kappa$ is regular in $L$ and $\kla{L_\btheta,\in}\models\phi(a)$, as wished.
\end{proof}

The following observation completes the picture, illustrating that $\Sigma_2\text{-}\BFA_\CC$ plays the role in the context of countably closed forcing that $\BFA_\Gamma$ (equivalently, $\Sigma_1\text{-}\BFA_\Gamma$) played in the case where $\Gamma$ is the class of proper or subcomplete forcing notions, and parallels Observation \ref{obs:InconsistencyOfSigma2-BFAccc}, with the role the Souslin trees used to play taken over by Kurepa trees.

\begin{obs}
\label{obs:InconsistencyOfSigma3-BFACC}
The following facts hold about the principles $\Sigma_n\text{-}\BFA_\CC$.
\begin{enumerate}[label=(\arabic*)]
  \item
  \label{item:GettingDiamond}
  The principle $\Sigma_2\text{-}\BFA_\CC$ implies $\CH$, and even $\diamondsuit$.
  \item
  \label{item:NonKH}
  Furthermore, $\Sigma_2\text{-}\BFA_\CC$ implies the failure of Kurepa's hypothesis, that is, it implies that there are no Kurepa trees.
  \item
  \label{item:Sigma3BFACCinconsistent}
  The principle $\Sigma_3\text{-}\BFA_\CC$ is inconsistent.
\end{enumerate}
\end{obs}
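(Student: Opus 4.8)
The plan is to mirror the structure of Observation~\ref{obs:InconsistencyOfSigma2-BFAccc}, with Kurepa trees playing the role that Souslin trees played there, and with every complexity bound raised by one level to account for the fact that $\BFA_\CC$ already delivers the $\Sigma_1$-elementarity of Fact~\ref{fact:Sigma1AbsolutenessForCC} for free. For \ref{item:GettingDiamond}, I would first observe that ``there is a $\diamondsuit$-sequence'' is a $\Sigma_2$ assertion over $\kla{\HSC,\in}$: a candidate sequence $\seq{A_\alpha}{\alpha<\omega_1}$ is an element of $\HSC$, and the statement that it guesses correctly, namely that for every $X\sub\omega_1$ the set $\{\alpha<\omega_1 : X\cap\alpha = A_\alpha\}$ is stationary, is $\Pi_1$ over $\HSC$ — a universal quantifier over the subsets $X\in\HSC$ followed by the $\Pi_1$ assertion of stationarity (``no club is disjoint from the guessing set''). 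Since the standard countably closed forcing to add a $\diamondsuit$-sequence forces $\diamondsuit$, this $\Sigma_2$-statement becomes true in $\HSC^{\V[G]}$; by the downward $\Sigma_2$-absoluteness supplied by $\Sigma_2\text{-}\BFA_\CC$ it already holds in $\HSC$, so $\diamondsuit$, and hence $\CH$, holds in $\V$.

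For \ref{item:NonKH}, the key auxiliary fact is the well-known branch-preservation lemma: a countably closed forcing adds no new cofinal branch to a Hausdorff tree of height $\omega_1$ all of whose levels are countable. (The usual proof builds a perfect tree of conditions deciding a putative new branch along pairwise incompatible paths and, using countable closure, extracts $2^\omega$ distinct nodes on a single countable level, a contradiction.) Granting this, suppose $T$ is a Kurepa tree and let $\lambda\ge\omega_2$ be its number of cofinal branches. Forcing with $\Col{\omega_1,\lambda}$ is countably closed, adds no branches to $T$, preserves $\omega_1$ and the countability of the levels, and collapses $\lambda$ to $\omega_1$; hence in $\V[G]$ the tree $T$ has exactly $\omega_1$ cofinal branches and is no longer Kurepa. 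Now ``$T$ has at most $\omega_1$ cofinal branches'' is $\Sigma_2$ over $\HSC$ (it asserts the existence of a surjection $f\colon\omega_1\to(\text{branches})$, the surjectivity clause being the $\Pi_1$ statement that every cofinal branch lies in $\ran f$), so the downward $\Sigma_2$-absoluteness of $\Sigma_2\text{-}\BFA_\CC$ forces this to hold of $T$ already in $\V$, contradicting that $T$ is Kurepa. Thus there are no Kurepa trees.

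For \ref{item:Sigma3BFACCinconsistent}, I would first pin down that ``$T$ is a Kurepa tree'' is $\Pi_2$ over $\HSC$: being a tree of height $\omega_1$ is $\Pi_1$; ``all levels are countable'' is $\Sigma_1$ (bundle the surjections $\omega\to T_\alpha$ into a single function in $\HSC$); and ``more than $\omega_1$ cofinal branches'' is the $\Pi_2$ statement that for every $\omega_1$-indexed family of branches there is a further branch outside it. Hence ``there is a Kurepa tree'' is $\Sigma_3$. Since the standard countably closed forcing to add a Kurepa tree produces one, this $\Sigma_3$-sentence holds in $\HSC^{\V[G]}$; as $\Sigma_3\text{-}\BFA_\CC$ entails $\Sigma_2\text{-}\BFA_\CC$, part~\ref{item:NonKH} applies and $\V$ has no Kurepa tree, while downward $\Sigma_3$-absoluteness forces one into $\V$ — a contradiction. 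So $\Sigma_3\text{-}\BFA_\CC$ is inconsistent.

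The main obstacle is the argument underlying \ref{item:NonKH}: one must destroy Kurepa-ness by a \emph{countably closed} forcing, and since forcing can only ever add branches, this can be arranged only by collapsing the branches — which is harmless precisely because the branch-preservation lemma guarantees that no new branches appear, so the collapse genuinely leaves $T$ with only $\omega_1$ branches. The secondary point requiring care is the bookkeeping of quantifier complexity, ensuring that ``few branches'' is genuinely $\Sigma_2$ (so that part~\ref{item:NonKH} uses exactly $\Sigma_2\text{-}\BFA_\CC$) and that ``$T$ is Kurepa'' is genuinely $\Pi_2$ (so that ``there is a Kurepa tree'' lands at $\Sigma_3$ and no lower).
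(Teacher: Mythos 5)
Your proposal is correct and follows essentially the same route as the paper: force $\diamondsuit$ by $\Add(\omega_1,1)$ and note it is $\Sigma_2$ over $H_{\omega_2}$; kill Kurepa-ness of a given $T$ by collapsing its branch set with $\Col{\omega_1,\kappa}$, using that countably closed forcing adds no cofinal branches, and note that the relevant statement sits at the $\Sigma_2/\Pi_2$ level; then add a Kurepa tree by a countably closed forcing and observe that ``there is a Kurepa tree'' is $\Sigma_3$, contradicting part (2). The extra details you supply (the branch-preservation argument and the explicit quantifier counting) are accurate and match what the paper leaves implicit.
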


\begin{proof}
\ref{item:GettingDiamond} follows because the principle $\diamondsuit$ can be forced by countably closed forcing, for example by $\Add(\omega_1,1)$, and it is easy to see that $\diamondsuit$ can be expressed by a $\Sigma_2$ sentence in $\HSC$, using $\omega_1$ as a parameter: there is a sequence $\seq{D_\alpha}{\alpha<\omega_1}$ such that for every set $A\sub\omega_1$ and every club set $C\sub\omega_1$, there is an $\alpha\in C$ such that $A\cap\alpha=C_\alpha$.

For \ref{item:NonKH}, suppose $T\in\HSC$ were a Kurepa tree. Let $\kappa$ be the cardinality of the set of cofinal branches through $T$. Then after forcing with $\Col{\omega_1,\kappa}$, say to reach $\V[G]$, $T$ is no longer a Kurepa tree, because $\Col{\omega_1,\kappa}$, and more generally, no countably closed forcing, can add a cofinal branch to $T$. But the statement that $T$ is a Kurepa tree can be expressed over $\HSC$ by a $\Pi_2$ formula $\phi(T)$, essentially saying that $T$ is an $\omega_1$ tree such that for every set $x$, there is a cofinal branch through $T$ that's not in $x$ (since every set in $\HSC$ has size at most $\omega_1$). Thus, in this scenario, it is not true that $\kla{\HSC,\in}\prec_{\Sigma_2}\kla{\HSC,\in}^{\V[G]}$.

For \ref{item:Sigma3BFACCinconsistent}, assuming $\Sigma_3\text{-}\BFA_\CC$, we know by \ref{item:NonKH} that there is no Kurepa tree. But it is well-known that a Kurepa tree may be added by a countably closed poset. Let $\V[G]$ be obtained by forcing with such a poset. Then, since we have just seen in the proof of \ref{item:NonKH} that ``$T$ is Kurepa'' is a $\Pi_2$ statement about $T$ in $\HSC$, the statement ``there is a Kurepa tree'' is expressed by a true $\Sigma_3$ sentence over $\HSC^{\V[G]}$. So by $\Sigma_3\text{-}\BFA_\CC$, it follows that there is a Kurepa tree in $\V$ after all, a contradiction.
\end{proof}

A similar analysis can be carried out for the class of ${<}\kappa$-closed forcing notions, for some regular cardinal $\kappa>\omega_1$. The adequate ``bounded forcing axiom'' for this class would then say that whenever $G$ is generic for some ${<}\kappa$-closed forcing notion, then \[\kla{H_{\kappa^+},\in}\prec_{\Sigma_2}\kla{H_{\kappa^+},\in}^{\V[G]}.\]
If we slightly abuse notation and denote the resulting principle $\Sigma_2\text{-}\BFA_{\closed{\kappa}}$, then the version of Observation \ref{obs:InconsistencyOfSigma3-BFACC} reads:

\begin{obs}
\label{obs:InconsistencyOfSigma3-BFAkappaC}
Let $\kappa\ge\omega_1$ be a regular cardinal.
\begin{enumerate}[label=(\arabic*)]
  \item
  \label{item:GettingDiamondKappa}
  The principle $\Sigma_2\text{-}\BFA_{\closed{\kappa}}$ implies $2^{{<}\kappa}=\kappa$, and even $\diamondsuit_\kappa$.
  \item
  \label{item:NonSlimKHKappa}
  Furthermore, $\Sigma_2\text{-}\BFA_{\closed{\kappa}}$ implies that there are no slim $\kappa$-Kurepa trees.
  \item
  \label{item:Sigma3BFAkappaCinconsistent}
  The principle $\Sigma_3\text{-}\BFA_{\closed{\kappa}}$ is inconsistent.
\end{enumerate}
\end{obs}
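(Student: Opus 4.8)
The plan is to adapt the proof of Observation~\ref{obs:InconsistencyOfSigma3-BFACC} almost verbatim, replacing $\omega_1$ by $\kappa$, the structure $\kla{\HSC,\in}$ by $\kla{H_{\kappa^+},\in}$, countably closed forcing by ${<}\kappa$-closed forcing, and the role of Kurepa trees by that of \emph{slim} $\kappa$-Kurepa trees (trees of height $\kappa$ with all levels of size ${<}\kappa$, satisfying $|T_\alpha|\le|\alpha|$ for infinite $\alpha$, and having more than $\kappa$ cofinal branches). I use throughout that $\Sigma_2\text{-}\BFA_{\closed{\kappa}}$ asserts $\kla{H_{\kappa^+},\in}\prec_{\Sigma_2}\kla{H_{\kappa^+},\in}^{\V[G]}$ for every $G$ generic for a ${<}\kappa$-closed forcing, so that $\Sigma_2$- and hence $\Pi_2$-statements with parameters in $H_{\kappa^+}^\V$ (in particular $\kappa$) transfer in both directions, while the $\Sigma_3$-version transfers $\Sigma_3$-statements downward.

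For \ref{item:GettingDiamondKappa}, I would note that $\diamondsuit_\kappa$ can be forced by a ${<}\kappa$-closed forcing, e.g.\ $\Add(\kappa,1)$ or the forcing whose conditions are the partial sequences $\seq{A_\xi}{\xi\le\alpha}$ with $\alpha<\kappa$ and $A_\xi\sub\xi$ (which is ${<}\kappa$-closed since $\kappa$ is regular), and that, exactly as for $\kappa=\omega_1$, $\diamondsuit_\kappa$ is $\Sigma_2$ over $H_{\kappa^+}$ with parameter $\kappa$: ``there is $\seq{A_\alpha}{\alpha<\kappa}$ such that for every $A\sub\kappa$ and every club $C\sub\kappa$ there is $\alpha\in C$ with $A\cap\alpha=A_\alpha$'', the matrix after the leading existential being $\Pi_1$. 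Thus $\Sigma_2\text{-}\BFA_{\closed{\kappa}}$ reflects $\diamondsuit_\kappa$ to $\V$. Finally $\diamondsuit_\kappa$ yields $2^{<\kappa}=\kappa$: every bounded $B\sub\kappa$ equals some $A_\alpha$ (choose $\alpha>\sup(B)$ in the guessing set for $B$), so there are at most $\kappa$ bounded subsets of $\kappa$, giving $2^{<\kappa}\le\kappa$, while $\ge$ is trivial.

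For \ref{item:NonSlimKHKappa}, the point is that ``$T$ is a slim $\kappa$-Kurepa tree'' is $\Pi_2$ over $H_{\kappa^+}$ in $T,\kappa$: being a slim tree of height $\kappa$ with levels of size ${<}\kappa$ is of low complexity, and having more than $\kappa$ branches is expressed, using that every element of $H_{\kappa^+}$ has size at most $\kappa$, by the $\Pi_2$ condition ``for every $x$ there is a cofinal branch of $T$ not in $x$''. Assuming a slim $\kappa$-Kurepa tree $T\in H_{\kappa^+}$ with $\mu>\kappa$ branches, I force with $\Col{\kappa,\mu}$ (which is ${<}\kappa$-closed) to reach $\V[G]$, where $|\mu|=\kappa$; provided no new cofinal branch of $T$ is added, $T$ then has only $\kappa$ branches and fails to be Kurepa, so the $\Pi_2$-statement ``$T$ is a slim $\kappa$-Kurepa tree'' holds in $H_{\kappa^+}^\V$ but not in $H_{\kappa^+}^{\V[G]}$, contradicting the $\Pi_2$-elementarity supplied by $\Sigma_2\text{-}\BFA_{\closed{\kappa}}$.

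The one point needing real work — and the reason slimness is imposed rather than mere $\kappa$-Kurepa-ness — is the branch-preservation fact that a ${<}\kappa$-closed forcing adds no new cofinal branch to a slim $\kappa$-tree; I expect this to be the main obstacle and would isolate it as a preliminary lemma. For $\kappa=\omega_1$ it is automatic, since an $\omega_1$-tree already satisfies $|T_\alpha|\le\omega=|\alpha|$ and is slim, whereas for $\kappa>\omega_1$ a $\kappa$-tree with levels of size ${<}\kappa$ need not be slim, and slimness is exactly what drives the classical counting argument: if a condition forced a new branch, ${<}\kappa$-closure lets one build a binary tree of conditions deciding incompatible level-$\alpha$ nodes along ${<}\kappa$ levels, and the bound $|T_\alpha|\le|\alpha|$ forces a collision at some level, a contradiction. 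With this lemma in hand, \ref{item:Sigma3BFAkappaCinconsistent} follows as for $\kappa=\omega_1$: assuming $\Sigma_3\text{-}\BFA_{\closed{\kappa}}$, part~\ref{item:GettingDiamondKappa} gives $2^{<\kappa}=\kappa$, so the standard ${<}\kappa$-closed forcing to add a slim $\kappa$-Kurepa tree is available (and $\kappa^+$-c.c., hence cardinal-preserving); in its extension $\V[G]$ the $\Sigma_3$-sentence ``there is a slim $\kappa$-Kurepa tree'', an existential over the $\Pi_2$-property above, holds, so by $\Sigma_3\text{-}\BFA_{\closed{\kappa}}$ it reflects to $\V$, contradicting \ref{item:NonSlimKHKappa}.
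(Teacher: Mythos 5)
Your proposal is correct and is precisely the adaptation the paper intends: the paper gives no separate proof of this observation, instead pointing back to the proof of Observation \ref{obs:InconsistencyOfSigma3-BFACC} and deferring the slim-tree technicalities to \cite[Lemma 3.2, Theorem 3.3]{Fuchs:MPclosed}. The one ingredient you rightly isolate as a preliminary lemma --- that ${<}\kappa$-closed forcing adds no cofinal branch to a slim $\kappa$-tree, which is exactly where slimness (rather than mere $\kappa$-Kurepa-ness) is used --- is the content of the cited Lemma 3.2, so your decomposition matches the paper's.
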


For details concerning slim $\kappa$-Kurepa trees in this context, see \cite[Lemma 3.2, Theorem 3.3]{Fuchs:MPclosed}. The consistency strength analysis carries over as well, as follows. The version of Fact \ref{fact:Sigma1AbsolutenessForCC} for ${<}\kappa$-closed forcing does not follow from Fact \ref{fact:BFACharacterizationForNaturalGamma}, but instead, one can appeal to \cite[p.~298, (I6)]{Kunen:SetTheory} and the argument of \cite[Observation 4.19]{FuchsMinden:SCforcingTreesGenAbs}, under the assumption that $2^{{<}\kappa}=\kappa$. The following theorem can then be proven, using the argument of the proof of Theorem \ref{thm:ConsistencyStrengthOfSigma2BFACC}, mutatis mutandis.

\begin{thm}
\label{thm:ConsistencyStrengthOfSigma2BFAkappaC}
Let $\kappa$ be a regular cardinal. Then the consistency strength of $\Sigma_2\text{-}\BFA_{\closed{\kappa}}$ is a reflecting cardinal, in the following sense:
\begin{enumerate}[label=(\arabic*)]
\item
\label{item:ForcingDirectionKappa}
If $\theta>\kappa$ is a reflecting cardinal and $G$ is generic for $\Col{\kappa,{{<}\theta}}$, then the principle $\Sigma_2\text{-}\BFA_{\closed{\kappa}}$ holds in $\V[G]$.
\item
\label{item:InnerModelDirectionKappa}
The axiom $\Sigma_2\text{-}\BFA_{\closed{\kappa}}$ implies that $(\kappa^+)^\V$ is a reflecting cardinal in $L$.
\end{enumerate}
\end{thm}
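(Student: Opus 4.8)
The plan is to run the proof of Theorem~\ref{thm:ConsistencyStrengthOfSigma2BFACC} essentially verbatim under the substitutions $\omega_1\rightsquigarrow\kappa$, $\HSC\rightsquigarrow H_{\kappa^+}$, $\Col{\omega_1,\cdot}\rightsquigarrow\Col{\kappa,\cdot}$, and ``reflecting cardinal $\kappa$'' $\rightsquigarrow$ ``reflecting cardinal $\theta>\kappa$''. Two ingredients of that proof are not literally available and must be reinstated under the hypothesis $2^{{<}\kappa}=\kappa$: first, the analogue of Fact~\ref{fact:Sigma1AbsolutenessForCC}, namely that whenever $\Q$ is \closed{\kappa} and $H$ is $\Q$-generic then $\kla{H_{\kappa^+},\in}\prec_{\Sigma_1}\kla{H_{\kappa^+},\in}^{\V[H]}$ --- which, as noted before the statement, does not follow from naturalness but from \cite[p.~298, (I6)]{Kunen:SetTheory} together with the argument of \cite[Observation~4.19]{FuchsMinden:SCforcingTreesGenAbs} once $2^{{<}\kappa}=\kappa$; and second, the universality of the L\'evy collapse, i.e.\ that any \closed{\kappa} forcing of size ${<}\tau$ is absorbed by $\Col{\kappa,[\theta,\tau)}$, so that $\Q\times\Col{\kappa,[\theta,\tau)}\isomorphic\Col{\kappa,[\theta,\tau)}$ for a suitable regular $\tau$, which is standard given $\kappa^{{<}\kappa}=\kappa$. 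The hypothesis $2^{{<}\kappa}=\kappa$ is available where needed: in part~\ref{item:ForcingDirectionKappa} it may be assumed in $\V$ (it is preserved by the \closed{\kappa} collapse, and the reflecting cardinal survives a preparatory forcing of GCH), and in part~\ref{item:InnerModelDirectionKappa} it follows from the axiom itself by Observation~\ref{obs:InconsistencyOfSigma3-BFAkappaC}\ref{item:GettingDiamondKappa}.

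For \ref{item:ForcingDirectionKappa}, fix $\theta$ reflecting, $G$ generic for $\Col{\kappa,{<}\theta}$, a \closed{\kappa} forcing $\Q\in\V[G]$ with generic $H$, a parameter $a\in H_{\kappa^+}^{\V[G]}$, and a $\Sigma_2$-formula $\phi(x)=\exists y\,\psi(x,y)$ with $\psi\in\Pi_1$ such that $\kla{H_{\kappa^+},\in}^{\V[G][H]}\models\phi(a)$. Since $\theta$ is inaccessible, $\Col{\kappa,{<}\theta}$ is $\theta$-c.c., so there is $\alpha<\theta$ with $a\in H_{\kappa^+}^{\V[G_\alpha]}$, where $G_\alpha=G\cap\Col{\kappa,{<}\alpha}$. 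Using absorption, choose $\tau$ and a $\Col{\kappa,[\theta,\tau)}$-generic $I$ over $\V[G][H]$ together with an $I^*$ generic over $\V[G]$ so that $\V[G][H][I]=\V[G][I^*]=\V[G_\alpha][G_{[\alpha,\theta)}][I^*]$. Letting $b$ witness $\psi(a,b)$ in $\V[G][H]$, the reinstated Fact gives two-sided $\Pi_1$-absoluteness (as $\Sigma_1$-elementarity yields $\Pi_1$-absoluteness and $b$ lives in the smaller model), so $\psi(a,b)$, hence $\phi(a)$, holds in $H_{\kappa^+}^{\V[G][H][I]}$. Working in $\V[G_\alpha]$, the product $G_{[\alpha,\theta)}\times I^*$ is generic for $\Col{\kappa,[\alpha,\theta)}\times\Col{\kappa,[\theta,\tau)}\isomorphic\Col{\kappa,[\alpha,\tau)}$, and by weak homogeneity the empty condition forces $\kla{H_{\kappa^+},\in}\models\phi(\check a)$; thus there is a regular cardinal above $\theta$ in whose $H_\eta^{\V[G_\alpha]}$ it is true that some tail of the L\'evy collapse forces $\phi(\check a)$. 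Since $\theta$ remains reflecting in $\V[G_\alpha]$ (the preceding forcing being small), reflect to obtain $\theta'<\theta$ such that $\Col{\kappa,[\alpha,\theta')}$ forces $\phi(\check a)$ in $H_{\kappa^+}$; by homogeneity $\phi(a)$ holds in $H_{\kappa^+}^{\V[G_{\theta'}]}$, and it persists to $H_{\kappa^+}^{\V[G]}$ by the reinstated $\Sigma_1/\Pi_1$-absoluteness across the remaining \closed{\kappa} tail, the witness lying in $\V[G_{\theta'}]$. Hence $\kla{H_{\kappa^+},\in}^{\V[G]}\models\phi(a)$.

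For \ref{item:InnerModelDirectionKappa}, set $\kappa'=(\kappa^+)^\V$, which is regular in $L$. To see it is reflecting there, take $\theta>\kappa'$ regular in $L$ and $a\in H_{\kappa'}^L=L_{\kappa'}$ with $L_\theta\models\phi(a)$. Force with the \closed{\kappa} poset $\Col{\kappa,\theta}$ to reach $\V[G]$, in which $\theta$ is collapsed so that $(\kappa^+)^{\V[G]}>\theta$; then the statement $\psi(a)$ asserting ``there is $\bar\theta$ regular in $L$ with $L_{\bar\theta}\models\phi(a)$'' holds in $\kla{H_{\kappa^+},\in}^{\V[G]}$. Exactly as in Theorem~\ref{thm:ConsistencyStrengthOfSigma2BFACC}, ``$\bar\theta$ regular in $L$'' is computed as regularity in $L_{(\kappa^+)^{\V[G]}}=L^{H_{\kappa^+}^{\V[G]}}$, and since ``$f\in L$'' is $\Sigma_1$, the relevant conditional is $\Pi_1$, so $\psi$ can be taken $\Sigma_2$. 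Applying $\Sigma_2\text{-}\BFA_{\closed{\kappa}}$ to the \closed{\kappa} forcing $\Col{\kappa,\theta}$ gives $\kla{H_{\kappa^+},\in}^\V\models\psi(a)$, and a witness $\bar\theta<\kappa'$ is regular in $L$ with $L_{\bar\theta}\models\phi(a)$, establishing reflection of $\kappa'$ in $L$.

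I expect the main obstacle to be the first reinstated ingredient: unlike the countably closed case, $\Sigma_1$-elementarity of $H_{\kappa^+}$ into its \closed{\kappa}-extensions is not a formal consequence of naturalness, so one must verify $2^{{<}\kappa}=\kappa$ in each model where the Fact is invoked and check that the argument of \cite[p.~298,(I6)]{Kunen:SetTheory} and \cite[Observation~4.19]{FuchsMinden:SCforcingTreesGenAbs} genuinely delivers two-sided $\Sigma_1$- (hence $\Pi_1$-) absoluteness at the level $\kappa^+$. A secondary point requiring care is that the absorption and weak-homogeneity properties of $\Col{\kappa,\cdot}$, the $\theta$-c.c.\ of $\Col{\kappa,{<}\theta}$, and the preservation of the reflecting property of $\theta$ under the small forcing $\Col{\kappa,{<}\alpha}$ all continue to hold; these are routine but rest on $\kappa^{{<}\kappa}=\kappa$, which is precisely why $2^{{<}\kappa}=\kappa$ threads through the whole argument.
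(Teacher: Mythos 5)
Your proposal is correct and follows essentially the same route as the paper, which itself only says that Theorem \ref{thm:ConsistencyStrengthOfSigma2BFAkappaC} is proved by the argument of Theorem \ref{thm:ConsistencyStrengthOfSigma2BFACC} \emph{mutatis mutandis}, with the $\Sigma_1$-elementarity of $H_{\kappa^+}$ into its \closed{\kappa}-extensions supplied by \cite[p.~298, (I6)]{Kunen:SetTheory} and \cite[Observation 4.19]{FuchsMinden:SCforcingTreesGenAbs} under $2^{{<}\kappa}=\kappa$. You have correctly identified exactly these two ingredients (that elementarity fact and the L\'evy-collapse absorption) and where the hypothesis $2^{{<}\kappa}=\kappa$ is available in each part, which is precisely the content the paper leaves implicit.
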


Some open questions from \cite{Fuchs:MPclosed} translate to open questions about these bounded forcing axioms. For example, is the principle $\Sigma_3\text{-}\BFA_{\directed{\kappa}}$ consistent, assuming the consistency of large cardinals? The argument that works for \closed{\kappa} forcing does not go through for ${<}\kappa$-directed closed forcing, because it is not generally true that one can add a slim $\kappa$-Kurepa tree by ${<}\kappa$-directed closed forcing.

Returning to subcomplete forcing, it is an interesting question whether $\Sigma_2\text{-}\BSCFA$ (that is $\Sigma_2\text{-}\BFA_\Gamma$, where $\Gamma$ is the class of subcomplete forcing notions) is consistent. Here is a consistency strength lower bound.

\begin{cor}
\label{cor:Sigma2BSCFAisStrong}
If $\Sigma_2\text{-}\BSCFA$ holds, then every real has a sharp.
\end{cor}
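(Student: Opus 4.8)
The plan is to argue by contradiction: assume $\Sigma_2\text{-}\BSCFA$ together with the failure of $x^\#$ for some real $x\sub\omega$, and derive a contradiction. First I would record two essentially free consequences. Since every countably closed forcing is subcomplete, $\Sigma_2\text{-}\BSCFA$ implies $\Sigma_2\text{-}\BFA_\CC$, so by Observation \ref{obs:InconsistencyOfSigma3-BFACC}\ref{item:GettingDiamond} we get $\CH$ (indeed $\diamondsuit$); this makes $\Add(\omega_1,1)$ an $\omega_2$-c.c., countably closed forcing, hence cardinal- and cofinality-preserving, and subcomplete. Second, since $x^\#$ does not exist, the relativized form of Lemma \ref{lem:BSCFA+NoZeroSharpImpliesWOofOmega1} (see the remark after Lemma \ref{lem:NonPreservationOfBSCFA}) applies: there is a set $\bI\sub\omega_1$, which we may take to code $x$, such that in $\V$ every $A\sub\omega_1$ admits a coding witness, i.e.\ $\V\models\forall A\,\phi(A,\bI)$, where $\phi(A,\bI)$ is the $\Sigma_1$ statement asserting the existence of a set of ordinals $w\in L[\bI]$ with $i\in A\iff\cf((w)_i)=\omega_1$ and $i\notin A\iff\cf((w)_i)=\omega$ for all $i<\omega_1$.

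Write $\Phi(\bI)$ for $\forall A\,\phi(A,\bI)$; since $\phi$ is $\Sigma_1$, $\Phi$ is $\Pi_2$, and $\bI\in H_{\omega_2}^\V$. Now let $g$ be generic for $\Add(\omega_1,1)$ and let $A^*\in\power(\omega_1)^{\V[g]}\ohne\V$ be the generic subset. Because $\Add(\omega_1,1)$ is subcomplete, $\Sigma_2\text{-}\BSCFA$ yields $H_{\omega_2}^\V\prec_{\Sigma_2}H_{\omega_2}^{\V[g]}$; as $\Phi(\bI)$ is $\Pi_2$ with parameter in $H_{\omega_2}^\V$ and holds in $\V$, it holds in $H_{\omega_2}^{\V[g]}$. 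Instantiating the universal quantifier at $A^*$ produces a set of ordinals $w\in L[\bI]^{\V[g]}$, with $w\in H_{\omega_2}^{\V[g]}$ so that each $(w)_i<\omega_2$, coding $A^*$ through the $\V[g]$-cofinality pattern of its monotone enumeration. But $\bI\in\V$, so $L[\bI]$ is computed identically in $\V$ and $\V[g]$, whence $w\in\V$; and since $\Add(\omega_1,1)$ preserves all cofinalities of ordinals below $\omega_2$, we have $\cf^{\V[g]}((w)_i)=\cf^\V((w)_i)$ for every $i<\omega_1$. Therefore $A^*=\{i<\omega_1\st\cf^\V((w)_i)=\omega_1\}$ is definable in $\V$ from the parameter $w\in\V$, so $A^*\in\V$ — contradicting $A^*\notin\V$. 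Hence $x^\#$ exists, and as $x$ was arbitrary, every real has a sharp.

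The conceptual heart of the argument is the choice of test statement and test forcing: $\BSCFA$ alone provides only $\Sigma_1$-elementarity (Fact \ref{fact:BFACharacterizationForNaturalGamma}), which propagates each instance $\phi(A,\bI)$ downward but cannot lift the $\Pi_2$ totality statement $\Phi$ into a forcing extension; it is precisely the extra $\Pi_2$-elementarity packaged into $\Sigma_2\text{-}\BSCFA$ that compels the cofinality-coding over $L[\bI]$ to stay total after new subsets of $\omega_1$ appear, which a cofinality-preserving forcing cannot supply. I expect the genuinely delicate points to be the bookkeeping rather than the strategy: verifying that $\phi$ can indeed be written $\Sigma_1$ (by supplying cofinal maps $\omega_1\to(w)_i$ and $\omega\to(w)_i$ as existential witnesses, using that inside $H_{\omega_2}$ every cofinality is $\omega$ or $\omega_1$), that the relativized Lemma \ref{lem:BSCFA+NoZeroSharpImpliesWOofOmega1} really yields $\bI$ with the coding witnesses lying in $L[\bI]\sub\V$ and with $L[\bI]^\V=L[\bI]^{\V[g]}$, and that $\Add(\omega_1,1)$ preserves $\omega_2$ and the relevant cofinalities — which is exactly where the $\CH$ obtained at the outset is needed.
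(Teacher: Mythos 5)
Your proof is correct and follows essentially the same route as the paper's: the paper phrases the $\Pi_2$ statement as the surjectivity of the $\Sigma_1$-definable map $F:\omega_2\to\power(\omega_1)$ coming from Lemma \ref{lem:BSCFA+NoZeroSharpImpliesWOofOmega1}, while you phrase it directly as the totality $\forall A\,\phi(A,\bI)$ of the cofinality-coding over $L[\bI]$, but these are the same statement, and both arguments then transfer it into an $\Add(\omega_1,1)$-extension via $\Sigma_2$-elementarity and use preservation of cofinalities below $\omega_2$ (granted by \CH) to pull the generic subset of $\omega_1$ back into $\V$. Your write-up is, if anything, slightly more explicit than the paper's about why the coding witness $w$ lies in $\V$ and why the decoding is absolute.
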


\begin{proof}
Suppose there was some $r\sub\omega$ such that $r^\#$ does not exist. By the remark at the end of Section \ref{sec:BSCFA}, it follows by Lemma \ref{lem:BSCFA+NoZeroSharpImpliesWOofOmega1} that there is a set $\bar{I}\sub\omega_1$ such that in $\HSC$, a well-order of $\power(\omega_1)$ can be defined from $\bar{I}$ in a $\Delta_1$ way. Now let $A\sub\omega_1$ be generic for $\Add(\omega_1,1)$. By \BSCFA, we know that \CH holds (see Observation \ref{obs:InconsistencyOfSigma3-BFACC}.\ref{item:GettingDiamond}), so that $\Add(\omega_1,1)$ has size $\omega_1$ and hence preserves $\omega_2$. Now in $\HSC^\V$, there is a $\Sigma_1$-definable function $F$ from $\omega_2$ onto $\power(\omega_1)$, and the statement that for every subset $x$ of $\omega_1$, there is an $\alpha$ such that $x=F(\alpha)$ is a $\Pi_2$ statement using the parameter $\bar{I}$. Thus, by $\Sigma_2\text{-}\SCFA$, that same statement holds in $\HSC^{\V[A]}$. So let $\alpha<\omega_2^{\V[A]}=\omega_2^\V$ be such that $A=F^{\HSC^{\V[A]}}(\alpha)$. It then follows that $F^{\HSC^{\V[A]}}(\alpha)=F^{\HSC^\V}(\alpha)$, so that $A\in\HSC^\V$, a contradiction.
\end{proof} 

\section{Enhanced bounded forcing axioms}
\label{sec:EBSCFA}

It would be desirable to prove versions of Theorem \ref{thm:WOofPomega1FrombfMPSCandSetManyGrounds} and Corollary \ref{cor:bfMPSC+NoIMwithAnIAlimitOfMeasurablesImpliesWO} for a version of $\BSCFA$ instead of $\bfMPSC$.
$\bfMPSC$ was needed because the complexity of the forcing invariant inner model used exceeded what can be expressed in a $\Sigma_1$ way inside $H_{\omega_2}$. The previous section showed that this problem cannot be resolved simply by working with $\Sigma_n-\BSCFA$: for $n=2$, it is unclear whether this principle is consistent, and for $n\ge 3$, it is inconsistent. Moreover, the forcing invariant inner model used in the earlier arguments might not even be locally definable in $\HSC$.
So the idea is to formulate a slightly strengthened form of $\BSCFA$, where $H_{\omega_2}$ is equipped with the requisite knowledge about $\V$.

\begin{defn}
\label{defn:BFA(I)}
Let $I$ be a class term, using parameters from $H_{\omega_2}$. Let $\Gamma$ be a class of forcing notions. Then $\BFA_\Gamma(I)$ says that whenever $\P$ is a forcing notion in $\Gamma$, $a\in H_{\omega_2}$ and $\phi(x)$ is a $\Sigma_1$-formula in the language of set theory with an extra predicate symbol $\dot{I}$ such that $\P$ forces that in the extension, $\kla{\HSC,\in,I\cap\HSC}\models\phi(a)$ holds (equivalently, whenever $G$ is $\P$-generic over $\V$, then $\kla{\HSC^{\V[G]},\in,I^{\V[G]}\cap \HSC^{\V[G]}}\models\phi(a)$), then $\kla{\HSC^\V,\in,I^\V\cap \HSC^\V}\models\phi(a)$.
\end{defn}

\begin{fact}
\label{fact:BFA(I)CharacterizationForNaturalGamma}
If $\Gamma$ is natural, then $\BFA_\Gamma(I)$ is equivalent to the following statement:
\claim{$(*)$}{
for every $\P\in\Gamma$ and every filter $G$ that is $\P$-generic over $\V$, it follows that
\[\kla{\HSC,\in,I\cap\HSC}\prec_{\Sigma_1}
\kla{\HSC^{\V[G]},\in,I^{\V[G]}\cap\HSC^{\V[G]}}.\]}
\end{fact}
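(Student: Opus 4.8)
The plan is to transcribe the proof of Fact~\ref{fact:BFACharacterizationForNaturalGamma} almost verbatim, carrying the extra predicate $\dot I$ along throughout. As there, the equivalence splits into two directions, the forcing-theoretic core residing entirely in the converse and resting on the naturalness of $\Gamma$; the only genuinely new issue is to justify that the two structures stand in the substructure relation that makes $\prec_{\Sigma_1}$ in $(*)$ meaningful.

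First I would dispatch the direction $(*)\Rightarrow\BFA_\Gamma(I)$, which is immediate: given $\P\in\Gamma$, $a\in\HSC$ and a $\Sigma_1$-formula $\phi(x)$ (in the language with $\dot I$) such that $\P$ forces $\kla{\HSC,\in,I\cap\HSC}\models\phi(a)$, fix a $\P$-generic $G$; then $\kla{\HSC^{\V[G]},\in,I^{\V[G]}\cap\HSC^{\V[G]}}\models\phi(a)$, and since $a$ lies in the smaller structure and $\phi$ is $\Sigma_1$, the downward half of the elementarity furnished by $(*)$ yields $\kla{\HSC,\in,I\cap\HSC}\models\phi(a)$, exactly the conclusion of $\BFA_\Gamma(I)$. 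For the converse, assume $\BFA_\Gamma(I)$ and fix $\P\in\Gamma$ and a $\P$-generic $G$. I would first record that $\HSC^\V\subseteq\HSC^{\V[G]}$ (any set hereditarily of size $<\omega_2^\V$ has hereditary size $<\omega_2^{\V[G]}$ in $\V[G]$, as cardinals of $\V[G]$ are cardinals of $\V$), so the two underlying transitive $\in$-structures are nested. The downward half of $(*)$ is then proved exactly as in Fact~\ref{fact:BFACharacterizationForNaturalGamma}: if $\phi(x)$ is $\Sigma_1$, $a\in\HSC^\V$, and $\kla{\HSC^{\V[G]},\in,I^{\V[G]}\cap\HSC^{\V[G]}}\models\phi(a)$, pick $p\in G$ forcing this, use naturalness of $\Gamma$ to obtain $\Q\in\Gamma$ forcing-equivalent to $\P_{{\le}p}$, note that $\Q$ forces $\kla{\HSC,\in,I\cap\HSC}\models\phi(\check a)$ in its extension, and apply $\BFA_\Gamma(I)$ to $\Q$.

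It remains to secure the upward half, i.e.\ $\Sigma_1$-persistence from $\V$ to $\V[G]$, and this is where I expect the main obstacle to lie. Over nested transitive $\in$-structures, upward persistence of $\Sigma_1$-formulas is automatic once the interpretations of $\dot I$ cohere on the smaller domain, that is, once $I^\V\cap\HSC^\V=I^{\V[G]}\cap\HSC^\V$: with this, every $\Delta_0$-formula in the language with $\dot I$ is absolute between the two structures, and an existential witness lying in $\HSC^\V$ transfers upward. The inclusion $I^{\V[G]}\cap\HSC^\V\subseteq I^\V$ actually follows from $\BFA_\Gamma(I)$ itself, applied to the atomic ($\Sigma_1$) formula $\dot I(x)$ together with naturalness: if $a\in\HSC^\V$ satisfies $a\in I^{\V[G]}$, then some $p\in G$ forces $\dot I(\check a)$, and passing to a $\Q\in\Gamma$ equivalent to $\P_{{\le}p}$ gives $a\in I^\V$. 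The reverse inclusion is the single place where a hypothesis on $I$ is genuinely invoked, since $\BFA_\Gamma(I)$ is a downward principle and does not by itself force a positive occurrence of $\dot I$ upward; it holds whenever $I$ is forcing-absolute, and in particular for the intended instances, where $I$ is the mantle $\Mantle$ or a class defined from it in a forcing-invariant way, so that $I^\V=I^{\V[G]}$ as classes. Thus I anticipate the whole argument to be a routine adaptation of Fact~\ref{fact:BFACharacterizationForNaturalGamma} via naturalness, with the only point requiring care being the coherence of $\dot I$ needed for the substructure relation, which the forcing-invariance of the relevant $I$ discharges.
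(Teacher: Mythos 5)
Your overall strategy is exactly the paper's: the published proof consists of the single sentence that the proof of Fact~\ref{fact:BFACharacterizationForNaturalGamma} goes through, and you have correctly isolated the one point at which that earlier argument does not literally transfer, namely that upward persistence of $\Sigma_1$-formulas between the two structures is no longer automatic once the predicate $\dot{I}$ is present, but requires the coherence $I^\V\cap\HSC^\V=I^{\V[G]}\cap\HSC^\V$. Your handling of the direction $(*)\Rightarrow\BFA_\Gamma(I)$, of the downward half of $(*)$ via naturalness, and of the inclusion $I^{\V[G]}\cap\HSC^\V\sub I^\V$ is correct.

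The gap is in your treatment of the remaining inclusion $I^\V\cap\HSC^\V\sub I^{\V[G]}$. You assert that this is where a genuine extra hypothesis on $I$ (forcing-absoluteness) must be invoked, on the grounds that $\BFA_\Gamma(I)$ is a downward principle and cannot push a positive occurrence of $\dot{I}$ upward. That diagnosis is mistaken, and as a consequence your argument proves the Fact only for forcing-invariant $I$, whereas it is stated (and used, e.g.\ in Remark~\ref{remark:Immunity}) for an arbitrary class term $I$. The missing inclusion follows from $\BFA_\Gamma(I)$ and naturalness by the very same maneuver you already employ for the other one, applied to the formula $\neg\dot{I}(x)$: this is a negated atomic formula, hence $\Delta_0$, hence $\Sigma_1$ in the expanded language. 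If $a\in I^\V\cap\HSC^\V$ but $a\notin I^{\V[G]}$, then some $p\in G$ forces that $\kla{\HSC,\in,I\cap\HSC}\models\neg\dot{I}(\check{a})$ in the extension; passing to a $\Q\in\Gamma$ equivalent to $\P_{{\le}p}$ and applying $\BFA_\Gamma(I)$ to $\neg\dot{I}(x)$ yields $a\notin I^\V$, a contradiction. With this observation, the coherence of $\dot{I}$ on $\HSC^\V$ --- and hence the upward half of $(*)$ --- is a consequence of $\BFA_\Gamma(I)$ itself, and no hypothesis on $I$ beyond what the Fact states is needed.
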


\begin{proof}
The proof of Fact \ref{fact:BFACharacterizationForNaturalGamma} goes through.
\end{proof}




The idea of enhancing the structure $\HSC$ with a predicate in order to strengthen the bounded forcing axiom is not new. Using this terminology, for example, $\BMM^{++}$ can be expressed equivalently as $\BMM(\NS_{\omega_1})$, see \cite[Lemma 10.94]{Woodin:ADforcingAxiomsNonstationaryIdeal}.

Not any class term $I$ can be used to enhance bounded forcing axioms, as we shall see. Let's explore some restrictions, and the relationship to maximality principles. After all, what we are looking for is an enhanced bounded forcing axiom that will still have the desired effects on the existence of definable well-orders of $\power(\omega_1)$, while being weaker than the full maximality principle.

\begin{remark}
\label{remark:Immunity}
Let $I$ be a class term, and let $\Gamma$ be a natural forcing class.
\begin{enumerate}[label=(\arabic*)]
  \item
  \label{item:BSCFA(I)impliesIcapHSCimmune}
  If $(*)$ holds, then $I\cap\HSC$ is \emph{immune to $\Gamma$}, meaning that that for any subcomplete forcing $\P$, if $G$ is $\P$-generic over $\V$, then $I\cap H_{\omega_2}=I^{\V[G]}\cap H_{\omega_2}^\V$.
  \item
  \label{item:bfMPSCandIcapHSCimmuneimpliesBSCFA(I)}
  If $\MP_\Gamma(\HSC)$ holds, the definition of $I$ only uses parameters from $\HSC$ (if any) and $\Gamma$-necessarily, $I\cap\HSC$ is immune to $\Gamma$ (meaning that whenever $G$ is generic for a poset in $\Gamma$, then in $\V[G]$, $I^{\V[G]}\cap\HSC^{\V[G]}$ is immune to $\Gamma^{\V[G]}$), then $\BFA_\Gamma(I)$ holds.
\end{enumerate}
\end{remark}

\begin{proof}
For \ref{item:BSCFA(I)impliesIcapHSCimmune}, let $G$ be $\P$-generic, where $\P\in\Gamma$. By $(*)$, $\kla{\HSC,\in,I\cap\HSC}\prec_{\Sigma_1}
\kla{\HSC^{\V[G]},\in,I^{\V[G]}\cap\HSC^{\V[G]}}$. This clearly implies that $I\cap\HSC=I^{\V[G]}\cap\HSC$.

To see \ref{item:bfMPSCandIcapHSCimmuneimpliesBSCFA(I)}, let $G$ be generic for some $\P\in\Gamma$, let $a\in\HSC$, and suppose that $\kla{\HSC^{\V[G]},\in,I^{\V[G]}\cap\HSC^{\V[G]}}\models\phi(a)$, where $\phi(x)$ is a $\Sigma_1$-formula in the language of set theory with an extra predicate symbol. We have to show that already in $\V$, $\kla{\HSC,\in,I}\models\phi(a)$ holds.

By assumption, $I^{\V[G]}$ is immune to $\Gamma^{\V[G]}$ in $\V[G]$. This implies that whenever $H$ is generic over $\V[G]$ for some forcing $\Q\in\Gamma^{\V[G]}$, then \[\kla{\HSC^{\V[G][H]},\in,I^{\V[G][H]}\cap\HSC^{\V[G][H]}}\models\phi(a),\] because $\phi(x)$ is $\Sigma_1$ and $\HSC^{\V[G]}$ is a transitive subset of $\HSC^{\V[G][H]}$. Hence, the statement ``$\kla{\HSC,\in,I}\models\phi(a)$'' is $\Gamma$-necessary in $\V[G]$. Since the definition of $I$ only requires parameters from $\HSC$, $\MP_\Gamma(\HSC)$ applies, and it follows that already in $\V$ it is the case that $\kla{\HSC,\in,I}\models\phi(a)$, as claimed.
\end{proof}

Note that $\NS_{\omega_1}$ is immune with respect to stationary set preserving forcing notions, and so, if $\Gamma$ is a class of $\Gamma$-necessarily stationary set preserving forcing notions, then $\MP_\Gamma(H_{\omega_2})$ implies $\BFA_\Gamma(\NS_{\omega_1})$, or what one might call $\BFA^{++}_\Gamma$.

If the class $I$ is definable in $\HSC$, then clearly, $\BFA_\Gamma(I)$ can be viewed as carefully strengthening the elementarity stated in Lemma \ref{fact:BFACharacterizationForNaturalGamma}.

Returning to the enhanced bounded forcing axioms of the form $\BFA_\Gamma(I)$, the following example arises from considering the proof of Observation \ref{obs:InconsistencyOfSigma3-BFACC}.

\begin{example}
Let $\calT$ be the set of all $\omega_1$-Kurepa trees whose nodes are countable ordinals. Then $\BFA_\CC(\calT)$ is inconsistent.
\end{example}
%
%

Note that the class of countably closed forcing notions is contained in any of the forcing classes of interest here, such as the subcomplete, proper, or stationary set preserving forcing notions. Thus, the enhanced bounded forcing axiom for this class, $\BFA_{\CC}(I)$, is the weakest one. The example hence shows that one has to be careful in choosing the class term $I$ by which one wants to enhance the bounded forcing axiom.

The argument in the proof of Observation \ref{obs:InconsistencyOfSigma3-BFACC} also shows that $\MP_\CC(\HSC)$ implies that $\calT=\leer$, and that $\calT$ is immune to $\sigma$-closed forcing, but $\calT$ is never $\sigma$-closed-necessarily immune to $\sigma$-closed forcing. Hence, this extra assumption in Remark \ref{remark:Immunity}.\ref{item:bfMPSCandIcapHSCimmuneimpliesBSCFA(I)} can't be dropped, and it does not follow automatically.

Here is a version of Lemma \ref{lem:WOfromTechnicalAssumption} for the context of enhanced bounded subcomplete forcing axioms.

\begin{lem}
\label{lem:WOfromTechnicalAssumptionWithBSCFA(I)}
Assume that $\psi(x,y,z)$ is a formula in the language of set theory such that for some set $P\sub\omega_1$ and some $\alpha\in\On$,
\[I_{\alpha,P}=\{\beta\in\On\st\psi(\beta,\alpha,P)\}\]
has size at least $\omega_1$ and consists of GCH survivors, and that $I_{\alpha,P}$ is absolute to subcomplete forcing extensions. Let
\[I_P=\{\kla{\gamma,\delta}\st \psi(\gamma,\delta,P)\}\]
Then $\BSCFA(I_P)$ implies the existence of a well-order of $\power(\omega_1)$, definable from $P$. This well-order is $\Delta_1^{\kla{H_{\omega_2},\in,I_P}}$
\end{lem}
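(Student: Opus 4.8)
The plan is to mimic the proof of Lemma~\ref{lem:BSCFA+NoZeroSharpImpliesWOofOmega1}, coding a given $A\sub\omega_1$ into the cofinality $\omega/\omega_1$ pattern of an $\omega_1$-sequence of GCH survivors, but with two substitutions dictated by the weaker hypothesis. First, the role played there by the inner model $L[\bI]$ (which supplied a forcing-absolute, $\Sigma_1$-definable coordinate system and a canonical well-order) is taken over here by the predicate $I_P$ itself: the hypotheses hand us, in $I_{\alpha,P}$, a forcing-absolute set of $\ge\omega_1$ many GCH survivors that is nameable inside $\kla{\HSC,\in,I_P}$ as the $\alpha$-section of the predicate. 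Second, in place of $\bfMPSC$ I would use the $\Sigma_1$-elementarity supplied by $\BSCFA(I_P)$ through Fact~\ref{fact:BFA(I)CharacterizationForNaturalGamma}, which applies because the class of subcomplete forcing notions is natural. So fix $P$ and $\alpha$ as in the statement, and let $\seq{\kappa_i}{i<\omega_1}$ enumerate in increasing order the first $\omega_1$ elements of $I_{\alpha,P}$; each $\kappa_i$ is a GCH survivor.

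Next I would carry out the coding. Given $A\sub\omega_1$, first run an Easton iteration of countably closed forcings to force $\GCH$ below $\sup_{i<\omega_1}\kappa_i$, keeping each $\kappa_i$ regular; then apply Jensen's extended Namba forcing on $\{\kappa_i\st i<\omega_1\}$ to arrange $\cf(\kappa_i)=\omega_1$ iff $i\in A$ and $\cf(\kappa_i)=\omega$ iff $i\notin A$; finally, if necessary, collapse with $\Col{\omega_1,\gamma}$ for $\gamma=\max(\alpha,\sup_i\kappa_i)$, so that both $\alpha$ and all $\kappa_i$ lie below $\omega_2$ of the resulting model $\V'$. Each factor is subcomplete, and since subcompleteness is preserved under the relevant two-step iterations, the composite $\P$ is subcomplete; in particular it adds no reals and preserves $\omega_1$. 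Because $I_{\alpha,P}$ is absolute to subcomplete forcing, its $\alpha$-section is unchanged, so in $\V'$ the first $\omega_1$ elements of the $\alpha$-section of $I_P$ are still exactly the $\kappa_i$, now all below $\omega_2^{\V'}$. Hence, letting $\mathrm{code}(\delta,A)$ abbreviate ``the section $S=\{\gamma\st\kla{\gamma,\delta}\in I_P\}$ has order type $\ge\omega_1$ and for all $i<\omega_1$, $\cf((S)_i)=\omega_1$ if $i\in A$ and $\cf((S)_i)=\omega$ if $i\notin A$'', the statement $\mathrm{code}(\alpha,A)$ holds in $\kla{\HSC^{\V'},\in,I_P^{\V'}\cap\HSC^{\V'}}$.

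Now I would reflect. The statement $\phi(A)\equiv\exists\delta\,\mathrm{code}(\delta,A)$ is $\Sigma_1$ over $\kla{\HSC,\in,\dot I}$ with parameter $A$: existentially quantify $\delta$, the section $S$, and a single function $g$ with domain $\omega_1$ whose value at $i$ is a cofinal increasing sequence in $(S)_i$ of length $\omega_1$ or $\omega$ according as $i\in A$ or $i\notin A$; bundling all the cofinality witnesses into the one object $g$ turns the matrix into a $\Delta_0$ condition. Since $\P$ is subcomplete and forces $\phi(A)$ in the extension's structure, $\BSCFA(I_P)$, in the form of Fact~\ref{fact:BFA(I)CharacterizationForNaturalGamma}, yields $\phi(A)$ in $\kla{\HSC,\in,I_P\cap\HSC}$, with a witness $\delta<\omega_2$. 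Setting $\alpha_A$ to be the least such $\delta$ and declaring $A<^*B$ iff $\alpha_A<\alpha_B$ gives a well-order of $\power(\omega_1)$ definable from $P$: the map $A\mapsto\alpha_A$ is injective because $A$ is recovered from its least code as $\{i<\omega_1\st\cf((S)_i)=\omega_1\}$, where $S$ is the $\alpha_A$-section of $I_P$.

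For the $\Delta_1$ claim the key point is that on any section witnessing some $\mathrm{code}(\delta,A)$ each $(S)_i$ has cofinality exactly $\omega$ or $\omega_1$, so there ``$\cf=\omega$'' (existence of a cofinal $\omega$-sequence) is $\Sigma_1$ and ``$\cf=\omega_1$'' is its negation, hence $\Pi_1$; thus $\mathrm{code}(\delta,A)$ becomes $\Delta_1$ over $\kla{\HSC,\in,\dot I}$, and comparing least codes expresses $<^*$ in a $\Delta_1$ fashion, as in the complexity computations of Lemma~\ref{lem:BSCFA+NoZeroSharpImpliesWOofOmega1}. I expect the main obstacle to be exactly this bookkeeping: one must simultaneously keep $\phi$ genuinely $\Sigma_1$ over the enhanced structure (so that $\BSCFA(I_P)$ applies) and guarantee that the reflected witness $\delta$ lives inside $\HSC$ — which is precisely why the collapse step and the absoluteness of $I_{\alpha,P}$ are both essential — while arranging that the resulting order relation is $\Delta_1$ rather than merely $\Sigma_1$.
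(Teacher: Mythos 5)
Your proposal is correct and follows essentially the same route as the paper's own proof: force $\GCH$ below the coding points, use Jensen's extended Namba forcing to code $A$ into the cofinality $\omega/\omega_1$ pattern on the first $\omega_1$ elements of $I_{\alpha,P}$, collapse so the witness lands in $H_{\omega_2}$, observe that the coding statement is $\Sigma_1$ over $\kla{\HSC,\in,I_P\cap\HSC}$, and apply $\BSCFA(I_P)$ to compare least witnesses. Your additional bookkeeping on the $\Sigma_1$/$\Delta_1$ complexity (bundling the cofinality witnesses into one function, and using that each coding ordinal has cofinality exactly $\omega$ or $\omega_1$) is sound and in fact more explicit than the paper's one-line treatment.
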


\begin{proof}
Given a set $A\sub\omega_1$, and fixing $P$ and $\alpha$ as above, let $\phi(\alpha,A,P)$ be the statement saying ``for all $i<\omega_1$, $i\in A$ iff $\cf((I_{\alpha,P})_i)=\omega_1$ and $i\notin A$ iff $\cf((I_{\alpha,P})_i)=\omega$''. The statement $\phi(A,P)$, saying ``there is an $\alpha'$ such that $\phi(\alpha',A,P)$ holds'' is then true in $\kla{H_{\omega_2},\in,I\cap\HSC}^{\V[G]}$, where $G$ is generic for the subcomplete forcing to code $A$ into the cofinality $\omega$/$\omega_1$-pattern and to collapse $\alpha$ to $\omega_1$. Since $\phi$ is $\Sigma_1$ (using $I$ as a predicate), it follows from $\BSCFA(I_P)$ that $\phi(A,P)$ is true in $\V$. So we can let $f(A)$ be the least $\alpha'$ such that $\phi(\alpha',A,P)$ holds and say that $A<B$ iff $f(A)<f(B)$.
\end{proof}

Let $\bbC=\{\alpha\st\alpha\ \text{is a GCH survivor}\}$. The strengthening of $\BSCFA$ I'll be mostly interested in is $\BSCFA(\bbC^\Mantle)$. That is, the class used to enhance \BSCFA is the relativization of the class of all GCH survivors to the mantle $\Mantle$. Obviously, since $\Mantle$ is forcing-absolute, so is $\bbC^\Mantle$. In particular, subcomplete-necessarily, $\bbC^\Mantle\cap\HSC$ is immune to subcomplete forcing, and since the definition of $\bbC^\Mantle\cap\HSC$ needs no parameters, it follows from Remark \ref{remark:Immunity}.\ref{item:bfMPSCandIcapHSCimmuneimpliesBSCFA(I)}
that $\bfMPSC$ implies $\BSCFA(\bbC^\Mantle)$. The point of $\BSCFA(\bbC^\Mantle)$ is that it has the same consequences as $\bfMPSC$, in terms of the existence of definable well-orders of $\power(\omega_1)$.

\begin{lem}
\label{lem:WOfromBSCFA(C^M)andSetManyGrounds}
If there are only set many grounds and $\BSCFA(\bbC^\Mantle)$ holds,
then there is a well-order of $\power(\omega_1)$ that's $\Delta_1^{\kla{H_{\omega_2},\in,\bbC^\Mantle\cap H_{\omega_2}}}$-definable, and $2^{\omega_1}=\omega_2$. This holds also with the weakened assumption that there is an $\alpha$ such that $\bbC\ohne\alpha=\bbC^\Mantle\ohne\alpha$.
\end{lem}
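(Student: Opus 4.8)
The plan is to obtain the well-order by a direct appeal to Lemma~\ref{lem:WOfromTechnicalAssumptionWithBSCFA(I)}, feeding it the predicate $\bbC^\Mantle$, and to reduce the hypothesis of set many grounds to the stated eventual-agreement hypothesis. I would take $P=\leer$ and let $\psi(\beta,\delta,\leer)$ be the formula $\beta\in\bbC^\Mantle\wedge\delta<\beta$, so that $I_{\delta,\leer}=\bbC^\Mantle\ohne(\delta+1)$ and the binary predicate $I_\leer=\{\kla{\beta,\delta}\st\beta\in\bbC^\Mantle,\ \delta<\beta\}$ is $\Delta_0$-definable from the single predicate $\bbC^\Mantle$. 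Since a $\Sigma_1$ assertion over $\kla{\HSC,\in,I_\leer\cap\HSC}$ is then equivalent to a $\Sigma_1$ assertion over $\kla{\HSC,\in,\bbC^\Mantle\cap\HSC}$, the axiom $\BSCFA(\bbC^\Mantle)$ implies $\BSCFA(I_\leer)$, and a well-order that Lemma~\ref{lem:WOfromTechnicalAssumptionWithBSCFA(I)} delivers as $\Delta_1^{\kla{\HSC,\in,I_\leer}}$-definable is in particular $\Delta_1^{\kla{\HSC,\in,\bbC^\Mantle\cap\HSC}}$-definable, as required.

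Under the weakened hypothesis I would fix $\alpha_0$ with $\bbC\ohne\alpha_0=\bbC^\Mantle\ohne\alpha_0$ and check the three hypotheses of Lemma~\ref{lem:WOfromTechnicalAssumptionWithBSCFA(I)} for this $\psi$, $P=\leer$ and $\alpha=\alpha_0$. The set $I_{\alpha_0,\leer}=\bbC^\Mantle\ohne(\alpha_0+1)$ has size at least $\omega_1$ because $\bbC^\Mantle$ is a proper class (successors of strong limit cardinals of $\Mantle$ are GCH survivors of $\Mantle$); it consists of GCH survivors of $\V$ because by the choice of $\alpha_0$ it equals $\bbC\ohne(\alpha_0+1)$; and it is absolute to subcomplete forcing because the mantle is invariant under set forcing (a \ZFC{} theorem, by Usuba's directedness result), so that $(\bbC^\Mantle)^{\V[G]}=\bbC^\Mantle$ and hence $(I_{\alpha_0,\leer})^{\V[G]}=I_{\alpha_0,\leer}$ for every subcomplete generic $G$. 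Lemma~\ref{lem:WOfromTechnicalAssumptionWithBSCFA(I)} then yields the well-order; its witnessing map, sending each $A$ to the least ordinal coding it, injects $\power(\omega_1)$ into $\omega_2$, giving $2^{\omega_1}=\omega_2$. The one thing to keep in mind is that although each coding cardinal $(I_{\alpha_0,\leer})_i$ is $\ge\omega_2^\V$ in $\V$, the coding-and-collapsing subcomplete forcing built in the proof of Lemma~\ref{lem:WOfromTechnicalAssumptionWithBSCFA(I)} pushes the first $\omega_1$ of them below $\omega_2$ while leaving them in the forcing-invariant class $\bbC^\Mantle$; this is exactly why they remain visible to the predicate $\bbC^\Mantle\cap\HSC$ in the extension and the $\Sigma_1$-reflection afforded by $\BSCFA(\bbC^\Mantle)$ applies.

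It remains to see that set many grounds implies $\bbC\ohne\alpha=\bbC^\Mantle\ohne\alpha$ for some $\alpha$, which is the main point requiring work. By Usuba's theorem there is a least ground, which is $\Mantle$, and $\V=\Mantle[g]$ for some $g$ generic over $\Mantle$ for a poset $\P\in\Mantle$ of some size $\lambda$. I would argue that whether a regular cardinal $\kappa$ is a GCH survivor is determined by the continuum function and cardinal structure strictly below $\kappa$: the standard GCH forcing factors into a part below $\kappa$ and a $\ge\kappa$-closed tail, and only the part below $\kappa$ can affect the regularity of $\kappa$. A nice-name count in $\Mantle$ (using that $\P$ is $\lambda^+$-c.c.\ of size $\lambda$) gives $(2^\gamma)^\V=(2^\gamma)^\Mantle$ for all $\gamma\ge\lambda$, while for $\gamma<\lambda$ both values are bounded by $(2^\lambda)^\Mantle$; and $\V$, $\Mantle$ have the same cardinals and cofinalities above $\lambda$. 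Hence for regular $\kappa>\alpha:=((2^\lambda)^\Mantle)^+$ the data governing survivorship coincide in $\V$ and $\Mantle$, so $\kappa\in\bbC\iff\kappa\in\bbC^\Mantle$, and $\bbC\ohne\alpha=\bbC^\Mantle\ohne\alpha$. The main obstacle is exactly this last step: isolating GCH-survivorship as a property that depends only on cardinal arithmetic below $\kappa$, and verifying that the bounded discrepancy between the continuum functions of $\Mantle$ and of its set-forcing extension $\V$ (confined to $\gamma<\lambda$, with all relevant values below $\alpha$) cannot alter survivorship above $\alpha$.
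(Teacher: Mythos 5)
Your proposal follows the paper's own argument: apply Lemma~\ref{lem:WOfromTechnicalAssumptionWithBSCFA(I)} with $P=\leer$ and $\psi$ picking out the elements of $\bbC^\Mantle$ above $\alpha$, note that $I_\leer$ is simply definable from the predicate $\bbC^\Mantle$ so that $\BSCFA(\bbC^\Mantle)$ suffices, and reduce the set-many-grounds hypothesis to eventual agreement of $\bbC$ and $\bbC^\Mantle$ above the size of the forcing leading from $\Mantle$ to $\V$. You in fact supply more detail than the paper, which simply asserts that $I_{\alpha,\leer}$ satisfies the hypotheses of that lemma for such $\alpha$; your nice-name computation and the survivorship-transfer sketch (including the caveat you flag) fill in exactly the step the paper leaves implicit.
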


\begin{proof}
Let $\psi(x,y,z)$ express that $x$ and $y$ are ordinals, and that $x>y$ is a GCH survivor in \Mantle{} (the variable $z$ is not used). If there are only set-many grounds, then if $\alpha$ is at least as large as the size of the forcing leading from \Mantle{} to $\V$, then, in the notation of Lemma \ref{lem:WOfromTechnicalAssumptionWithBSCFA(I)}, $I_{\alpha,\leer}$ satisfies the assumptions made in that lemma. It follows that $\BSCFA(I_\leer)$ has the consequences claimed. But clearly, $I_\leer$ is definable from $\bbC^\Mantle$ in a very simple way, so that the conclusion follows from $\BSCFA(\bbC^\Mantle)$.
\end{proof}

The following corollary is derivable as before.

\begin{cor}
Assume that there are only set-many grounds, $N$ is a set-forcing extension of $\V$, $\omega_2^\V=\omega_2^N$ and $N\models\BSCFA(\bbC^\Mantle)$. Then $\power(\omega_1)^\V=\power(\omega_1)^N$.
\end{cor}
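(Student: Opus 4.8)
The plan is to run the proof of Corollary~\ref{cor:PreservingbfMPSC} essentially verbatim, with the appeal to the proof of Theorem~\ref{thm:WOofPomega1FrombfMPSCandSetManyGrounds} replaced by an appeal to the proof of Lemma~\ref{lem:WOfromBSCFA(C^M)andSetManyGrounds} (which goes through Lemma~\ref{lem:WOfromTechnicalAssumptionWithBSCFA(I)}). First I would record two preliminary facts. Since $\omega_2^\V=\omega_2^N$ and $\V\sub N$, every cardinal of $N$ is a cardinal of $\V$, so $\omega_1^N$ is a $\V$-cardinal lying in $[\omega_1^\V,\omega_2^\V)$; as $\omega_1^\V$ and $\omega_2^\V$ are consecutive $\V$-cardinals, this forces $\omega_1^\V=\omega_1^N$, and I will not distinguish the two. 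Next I would observe that $N$, like $\V$, has only set-many grounds: the mantle is forcing-invariant, so $\Mantle^N=\Mantle^\V=:\Mantle$; because $\V$ has set-many grounds, $\Mantle$ is its bedrock and hence a ground of $\V$, and $\V$ is a ground of $N$ as $N$ is a set-forcing extension of $\V$; therefore $\Mantle$ is a ground of $N$, which (as noted after Definition~\ref{defn:MP}) is equivalent to $N$ having only set-many grounds, with $\Mantle$ as its bedrock.

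Now fix $A\sub\omega_1$ with $A\in N$. Working inside $N$, which satisfies $\BSCFA(\bbC^\Mantle)$ and, by the above, has set-many grounds with the same mantle $\Mantle$, the proof of Lemma~\ref{lem:WOfromBSCFA(C^M)andSetManyGrounds} provides an ordinal $\alpha<\omega_2$ such that, letting $\seq{\kappa_i}{i<\omega_1}$ enumerate the first $\omega_1$ many GCH survivors of $\Mantle$ above $\alpha$, one has $i\in A\iff\cf(\kappa_i)=\omega_1$ and $i\notin A\iff\cf(\kappa_i)=\omega$ for every $i<\omega_1$, all cofinalities computed in $N$. Since the well-order supplied by that lemma is $\Delta_1$ over $\kla{H_{\omega_2},\in,\bbC^\Mantle\cap H_{\omega_2}}$, each $\kappa_i$ belongs to $\bbC^\Mantle\cap H_{\omega_2}^N$; in particular $\kappa_i<\omega_2^N=\omega_2^\V$.

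The only step requiring genuine care is transferring the coding back to $\V$, and here I would argue exactly as in Corollary~\ref{cor:PreservingbfMPSC} that $\cf^\V(\kappa_i)=\cf^N(\kappa_i)$ for each $i$. Because $\kappa_i<\omega_2^\V$, its $\V$-cofinality is $\omega$ or $\omega_1$. If $\cf^N(\kappa_i)=\omega_1$ then $\cf^\V(\kappa_i)=\omega_1$, since a $\V$-witness to $\cf^\V(\kappa_i)=\omega$ would also witness $\cf^N(\kappa_i)=\omega$. If $\cf^N(\kappa_i)=\omega$ but, toward a contradiction, $\cf^\V(\kappa_i)=\omega_1$, fix in $\V$ a strictly increasing cofinal sequence $\seq{\xi_\eta}{\eta<\omega_1}$ in $\kappa_i$ and in $N$ a cofinal $\omega$-sequence $\seq{\zeta_n}{n<\omega}$; choosing $\eta_n<\omega_1$ with $\zeta_n<\xi_{\eta_n}$ and using that $\omega_1^N=\omega_1^\V$ is regular in $N$, the sup $\eta^*=\sup_n\eta_n<\omega_1$ yields $\kappa_i=\sup_n\zeta_n\le\xi_{\eta^*}<\kappa_i$, a contradiction, so $\cf^\V(\kappa_i)=\omega$. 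Finally, since $\Mantle^\V=\Mantle^N$ and ``$\beta$ is a GCH survivor of $\Mantle$'' is absolute between $\V$ and $N$, the sequence $\seq{\kappa_i}{i<\omega_1}$ is definable in $\V$ from the ordinal parameter $\alpha$, and combining this with the cofinality absoluteness just established, $A=\{i<\omega_1\st\cf(\kappa_i)=\omega_1\}$ is definable in $\V$, whence $A\in\V$. As $A\in N$ was arbitrary, $\power(\omega_1)^N\sub\V$, and since trivially $\power(\omega_1)^\V\sub\power(\omega_1)^N$, the two coincide. I expect essentially all of this to be bookkeeping riding on the forcing-invariance of $\Mantle$ and the already-proved Lemma~\ref{lem:WOfromBSCFA(C^M)andSetManyGrounds}; the cofinality-transfer argument is the sole point where the hypothesis $\omega_2^\V=\omega_2^N$ is used and is the only place needing the careful sequence-chasing above.
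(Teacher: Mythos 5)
Your proposal is correct and follows essentially the same route the paper intends: the paper gives no separate proof, saying only that the corollary is ``derivable as before,'' i.e.\ by rerunning the argument of Corollary \ref{cor:PreservingbfMPSC} with the coding supplied by Lemma \ref{lem:WOfromBSCFA(C^M)andSetManyGrounds} in place of Theorem \ref{thm:WOofPomega1FrombfMPSCandSetManyGrounds}, which is exactly what you do. Your explicit interleaving argument for $\cf^\V(\kappa_i)=\cf^N(\kappa_i)$ is a slightly more careful rendering of the cofinality-transfer step that the paper dispatches in one line, but it is the same argument in substance.
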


Not surprisingly, there is a version of Corollary \ref{cor:bfMPSC+NoIMwithAnIAlimitOfMeasurablesImpliesWO} for an appropriately enhanced bounded subcomplete forcing axiom.

\begin{cor}
Suppose there is no inner model with an inaccessible limit of measurable cardinals, and assume $\BSCFA(K,\Card^K)$ holds. Then there is a definable well-order of $\power(\omega_1)$.
\end{cor}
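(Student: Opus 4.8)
The plan is to run the argument of Corollary~\ref{cor:bfMPSC+NoIMwithAnIAlimitOfMeasurablesImpliesWO} essentially verbatim, but to arrange the coding so that the final reflection can be performed with $\BSCFA(K,\Card^K)$ in place of $\bfMPSC$, using Lemma~\ref{lem:WOfromTechnicalAssumptionWithBSCFA(I)} as the engine. Under the hypothesis the core model $K$ exists and is invariant under set forcing, so $K$ and $\Card^K$ are forcing-invariant class terms; in particular $K\cap\HSC$ and $\Card^K\cap\HSC$ are (subcomplete-necessarily) immune to subcomplete forcing, which is what makes $\kla{\HSC,\in,K,\Card^K}$ an appropriate structure to reflect into.

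The first point I would record is that the boundedness of the measurable cardinals, which in Corollary~\ref{cor:bfMPSC+NoIMwithAnIAlimitOfMeasurablesImpliesWO} was extracted from $K|\omega_2\prec K$ and hence needed $\bfMPSC$, is here available for free. Since $K\sub\V$ and $\omega_2=\omega_2^\V$ is regular in $\V$, any function in $K$ cofinal in $\omega_2$ from below would already witness $\cf^\V(\omega_2)<\omega_2$; thus $\omega_2$ is regular in $K$. Were the measurables of $K$ unbounded in $\omega_2$, then $\omega_2$ would be a regular limit of measurables of $K$, i.e.\ an inaccessible limit of measurables in $K$, contrary to hypothesis. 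Hence the measurables of $K$ are bounded below $\omega_2$, the maximal sequence of indiscernibles $\calC$ of Mitchell's covering lemma is a bounded subset of $\omega_2$, and, coding it by a set $D\sub\omega_1$, the model $K[D]$ has the covering property.

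With covering in hand I would proceed exactly as in Corollary~\ref{cor:bfMPSC+NoIMwithAnIAlimitOfMeasurablesImpliesWO}: let $I$ enumerate the first $\omega_1$ many GCH survivors of $\V$ above $\omega_1$, use covering to capture $I$ inside a $K[D]$-set $Z$ of size $\omega_1$, collapse $Z$, and absorb $D$ together with the collapsed image $\bar I$ into a single parameter $P\sub\omega_1$, so that $I$ is the $\alpha$-th set of $K[P]$ for an appropriate $\alpha$. Taking $\psi(x,y,z)$ to be the formula ``$y$ is an ordinal and $x$ belongs to the $y$-th set of $K[z]$'' and $I_{\alpha,P}=\{\beta\st\psi(\beta,\alpha,P)\}$, this $I_{\alpha,P}$ has size $\omega_1$, consists of GCH survivors, and is absolute to subcomplete forcing (as $K$ is forcing-invariant, $P$ is preserved, and subcomplete forcing adds no reals), so the hypotheses of Lemma~\ref{lem:WOfromTechnicalAssumptionWithBSCFA(I)} are met. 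The crucial point, and the place where $\bfMPSC$ was used before, is that the predicate $I_P=\{\kla{\gamma,\delta}\st\psi(\gamma,\delta,P)\}$ is definable over $\kla{\HSC,\in,K,\Card^K}$: relative constructibility from the predicate $K$ is $\Sigma_1$, so ``$\beta$ lies in the $\alpha$-th set of $K[z]$'' is a $\Sigma_1$ assertion once $K$ is a predicate, while $\Card^K$ supplies the $K$-cardinal structure needed to verify, via weak covering, that the captured points are genuine GCH survivors and that the covering capture is valid above the measurables. Consequently $\BSCFA(K,\Card^K)$ implies $\BSCFA(I_P)$ — just as $\BSCFA(\bbC^\Mantle)$ implied $\BSCFA(I_\leer)$ in Lemma~\ref{lem:WOfromBSCFA(C^M)andSetManyGrounds} — and Lemma~\ref{lem:WOfromTechnicalAssumptionWithBSCFA(I)} then delivers a well-order of $\power(\omega_1)$, of order type $\omega_2$ and $\Delta_1$-definable over $\kla{\HSC,\in,K,\Card^K}$ from $P$.

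I expect the genuine work to lie in the $\Sigma_1$-definability bookkeeping rather than in the set-theoretic geometry. One must check that ``$\beta$ belongs to the $\alpha$-th set of $K[z]$'' really is $\Sigma_1$ over the enhanced structure both in $\V$ and in the subcomplete coding extension $\V[G]$ — in particular that, after collapsing $\sup(I)$ to have size $\omega_1$, the set $I$ appears at a level of the $K[P]$-hierarchy below $\omega_2^{\V[G]}$, so that it is visible inside $\HSC^{\V[G]}$ — and that the two predicates $K$ and $\Card^K$ together suffice to express the GCH-survivor property of the coding points in a $\Sigma_1$ fashion. Once this is in place, the coding of a given $A\sub\omega_1$ into the cofinality $\omega/\omega_1$-pattern of $I_{\alpha,P}$ via Jensen's extended Namba forcing, followed by a collapse bringing the pattern into $\HSC$, is literally the argument of Lemmas~\ref{lem:BSCFA+NoZeroSharpImpliesWOofOmega1} and~\ref{lem:WOfromTechnicalAssumptionWithBSCFA(I)}.
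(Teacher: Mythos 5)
Your overall architecture agrees with the paper's proof, but there is a genuine gap at the very first step, where you claim that the boundedness of the measurables of $K$ is ``available for free.'' Your argument shows only that the measurable cardinals of $K$ lying \emph{below} $\omega_2$ do not accumulate at $\omega_2$; it says nothing about measurables of $K$ at or above $\omega_2$. The later steps, however, require the \emph{entire class} of $K$-measurables to be bounded below $\omega_2$: Mitchell's maximal system of indiscernibles $\calC$ lives below the measurables it serves, so if $K$ has even a single measurable $\kappa\ge\omega_2$ very high up (perfectly consistent with ``no inner model with an inaccessible limit of measurables''), then $\calC$ need not be a bounded subset of $\omega_2$, cannot be coded by a set $D\sub\omega_1$, and $K[D]$ need not have the covering property for the set of GCH survivors you want to capture --- a set that is in general cofinal in large ordinals, well above $\omega_2$.

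This is precisely the point at which the paper invokes $\BSCFA(K,\Card^K)$, and it cannot be dispensed with. For each $\gamma<\omega_2$, if $K$ has a measurable $\kappa>\gamma$ \emph{anywhere}, then after forcing with $\Col{\omega_1,\kappa}$ the statement ``there is a measurable cardinal of $K$ above $\gamma$'' becomes a true $\Sigma_1$ assertion over $\kla{H_{\omega_2},\in,K\cap H_{\omega_2},\Card^K}$ of the extension, and the enhanced bounded forcing axiom reflects it to $\V$, producing a $K$-measurable in the interval $(\gamma,\omega_2)$. Were this to happen for every $\gamma<\omega_2$, then $\omega_2$, being regular, would be an inaccessible limit of measurables in $K$, contradicting the hypothesis; hence the whole class of $K$-measurables is bounded below $\omega_2$, and only then is $\calC$ a bounded subset of $\omega_2$ codeable by some $D\sub\omega_1$ with $K[D]$ satisfying covering. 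Once you reinstate this use of the axiom, the remainder of your proposal --- capturing the GCH survivors in $K[P]$ and feeding $\psi(x,y,z)=$ ``$x$ belongs to the $y$-th set of $K[z]$'' into Lemma~\ref{lem:WOfromTechnicalAssumptionWithBSCFA(I)} --- coincides with the paper's argument.
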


\begin{proof}
It follows from the assumptions that the set of measurable cardinals of $K$ is bounded in $\delta=\omega_2^\V$. For otherwise, for every $\gamma<\delta$, the statement ``there is a measurable cardinal greater than $\gamma$ in $\dot{K}$'' is true in $\kla{H_{\omega_2},\in,K\cap H_{\omega_2},\Card^K}^{\V^{\Col{\omega_1,\kappa}}}$, where $\kappa>\gamma$ is measurable in $K$. This can be expressed in a $\Sigma_1$ way in this structure, and hence it is already true in $\kla{H_{\omega_2},\in,K\cap H_{\omega_2},\Card^K}$. Thus, $\delta$ is an inaccessible limit of measurable cardinals in $K$. This contradicts our assumption. Thus, using Mitchell's covering lemma, there a subset $D$ of $\omega_1$ that codes a maximal set of indiscernibles for $K$ such that $K[D]$ satisfies Jensen covering. Now, letting $X$ be the next $\omega_1$ many GCH survivors greater than $\omega_1$, it follows as before there is a set $\bX\sub\omega_1$, such that $X\in K[D][\bX]$. Letting $P=C\oplus\bX$, we can now let $\psi(x,y,z)$ be the statement that $x$ and $y$ are ordinals, and that $x$ belongs to the $y$-th element of $\dot{K}[z]$. Then there is an $\alpha$ such that $I_{\alpha,P}$ has size at least $\omega_1$, and the corollary follows from Lemma \ref{lem:WOfromTechnicalAssumptionWithBSCFA(I)}.
\end{proof}

It turns out that $\BSCFA(\bbC^\Mantle)$ is no stronger than $\BSCFA$ (in consistency-strength). In the proof of this fact, I'll use some standard notation: I write $C'$ for the set of all limit points below the supremum of a set $C$ of ordinals, and $S^\lambda_\kappa$ stands for the set of ordinals less than $\lambda$ of cofinality $\kappa$.

\begin{lem}
\label{lem:ConsistencyStrengthOfEnhancedBSCFA}
The consistency strength of $\BSCFA(\bbC^\Mantle)$ is a reflecting cardinal.
\end{lem}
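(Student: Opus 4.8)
The plan is to prove the two consistency-strength inequalities separately, and the lower bound is essentially immediate. Restricting the characterization in Fact~\ref{fact:BFA(I)CharacterizationForNaturalGamma} to $\Sigma_1$-formulas that do not mention the extra predicate $\dot I$ yields precisely the $\Sigma_1$-elementarity of Fact~\ref{fact:BFACharacterizationForNaturalGamma}; hence $\BSCFA(\bbC^\Mantle)$ implies $\BSCFA$. Since $\BSCFA$ implies that $\omega_2^\V$ is reflecting in $L$ \cite{Fuchs:HierarchiesOfForcingAxioms}, the model $L$ satisfies ``there is a reflecting cardinal'', which gives the desired lower bound.

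For the upper bound I would start in a model $W$ with a reflecting cardinal $\kappa$ and force with the standard subcomplete poset $\P$ of size $\kappa$ that is $\kappa$-c.c., collapses $\kappa$ to $\omega_2$, and forces $\BSCFA$ (from \cite{Fuchs:HierarchiesOfForcingAxioms}). The claim is that this very forcing already forces $\BSCFA(\bbC^\Mantle)$. What makes the enhancement come almost for free is that the mantle is invariant under set forcing \cite{FuchsHamkinsReitz:Geology,Usuba:DDGandVeryLargeCardinals}: writing $M=\Mantle^W$, every set-generic extension $N$ of $W$ has $\Mantle^N=M$, so in $W[G]$ and in every further subcomplete extension the predicate $\bbC^\Mantle$ is one and the same fixed definable class $\bbC^M$ of ordinals.

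By Fact~\ref{fact:BFA(I)CharacterizationForNaturalGamma} (together with the naturality of the subcomplete class) it suffices to show, for every subcomplete $\Q\in W[G]$ with generic $H$, that
\[\kla{\HSC,\in,\bbC^M\cap\HSC}^{W[G]}\prec_{\Sigma_1}\kla{\HSC,\in,\bbC^M\cap\HSC}^{W[G][H]}.\]
This is exactly the $\Sigma_1$-elementarity verified in the plain $\BSCFA$ case, except that the witnessing formula may now also speak about the fixed class $\bbC^M$. I would therefore rerun the Goldstern--Shelah reflection argument \cite{GoldsternShelah:BPFA,Fuchs:HierarchiesOfForcingAxioms} applied to the formula in the enriched language obtained by replacing $\dot I$ with the defining formula for ``GCH survivor in the mantle'': the extension $W[G][H]$ is a set-generic extension of $W$, so $\bbC^M$ is computed identically throughout; $\Q$ and $H$ can be absorbed into a collapse; the $\kappa$-c.c.\ places the parameter $a$ at a bounded stage of the iteration; and finally the reflecting cardinal $\kappa$, which reflects arbitrary first-order formulas about initial segments $H_\theta$, pulls the witness below $\kappa$.

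The step I expect to be the crux is checking that the reflection of $\kappa$ truly respects the mantle predicate, i.e.\ that for the cardinals $\theta<\kappa$ produced by reflection the local structure $\kla{H_\theta,\in}$ computes $\bbC^M\cap H_\theta$ correctly, so that the enriched reflected statement means what it should. Here Usuba's uniform definability of the grounds \cite{Usuba:DDGandVeryLargeCardinals} is what makes things work: it presents the mantle by a sufficiently absolute formula, from which one obtains $\Mantle\cap H_\theta=\Mantle^{H_\theta}$ for a club of $\theta$, and hence that ``$\beta$ is a GCH survivor in $\Mantle$'' reflects correctly down to these $H_\theta$. Once this local correctness is secured, the reflecting cardinal handles the enriched formula exactly as in the unenhanced proof, and the argument concludes as for $\BSCFA$.
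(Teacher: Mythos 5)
Your lower bound is fine and matches the paper. The upper bound, however, has a genuine gap. You propose to rerun the Goldstern--Shelah reflection argument ``in the enriched language obtained by replacing $\dot I$ with the defining formula for GCH survivor in the mantle.'' But that substitution destroys the $\Sigma_1$-ness on which the whole reflection argument rests: the mantle is a definable class of high logical complexity (a universal quantifier over the uniformly definable grounds), and ``GCH survivor in $\Mantle$'' is nowhere near $\Sigma_1$ over $\HSC$, nor even locally computable there --- this is precisely the obstruction the paper flags at the start of Section~\ref{sec:EBSCFA} as the reason plain $\BSCFA$ does not suffice. Your fix --- that $\Mantle\cap H_\theta=\Mantle^{H_\theta}$ for a club of $\theta$ --- does not repair this: an ordinary reflecting cardinal hands you \emph{some} $\btheta<\kappa$ at which the reflected first-order statement holds in $H_\btheta$, with no guarantee that this $\btheta$ lies in your club, and folding membership in that club into the reflected statement raises its complexity beyond what a plain reflecting cardinal handles. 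This is exactly why the paper later introduces $C^{(n)}$-reflecting cardinals (Definition~\ref{defn:Sigma-n-reflectingCardinal} and Lemma~\ref{lem:ForcingEnhancedBFAfromCnReflecting}) to force $\BFA_\Gamma(I)$ for predicates $I$ of complexity $\Delta_{n+1}$; your argument is essentially that lemma's, and it establishes the upper bound only from that stronger hypothesis, not from a mere reflecting cardinal.

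The paper's actual route around this is the idea your proposal is missing. Since reflecting cardinals go down to $L$, one may assume $\V=L$ for the upper bound; after the standard $\kappa$-c.c.\ subcomplete iteration forcing $\BSCFA$, the mantle of $L[g]$ is $L$ itself, so $\bbC^{\Mantle}$ becomes $\bbC^L$, the class of $L$-regular cardinals $\ge\omega_2$. The heart of the proof is then the self-contained implication $\BSCFA+\neg 0^\#\Longrightarrow\BSCFA(\bbC^L)$: using Jensen covering and the coherence of the canonical global $\square$-sequence of $L$, the statement ``$\delta$ is regular in $L$ and $\phi(a)$ holds in $\kla{Y,\in,\bbC^{L_\delta}\cap Y}$'' is rendered genuinely $\Sigma_1$ over $\HSC$ (after shooting a club of order type $\omega_1$ through a stationary set of $L$-singulars with constant $\otp(C_\alpha)$ and collapsing $\omega_3$), so that plain $\BSCFA$ pulls it back; the coherence of $\vec C$ then forces the witnessing $\delta$ to be $L$-regular. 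Without some such coding device that reduces the mantle predicate to a $\Sigma_1$-expressible one, your argument cannot close at the level of an ordinary reflecting cardinal.
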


\begin{proof}
Clearly, a reflecting cardinal is a lower bound, because $\BSCFA(\bbC^\Mantle)$ implies $\BSCFA$, and the consistency strength of the latter is a reflecting cardinal, see \cite[Theorem 3.6]{Fuchs:HierarchiesOfForcingAxioms}.

To see that a reflecting cardinal is an upper bound, recall that reflecting cardinals go down to $L$, so we may assume $\V=L$ and that there is a reflecting cardinal. In \cite{Fuchs:HierarchiesOfForcingAxioms}, it was shown that there is then a $\kappa$-c.c.~subcomplete forcing which forces $\BSCFA + \kappa=\omega_2$. Let's call the resulting model $L[g]$. Note that $\Mantle^{L[g]}=L$, and hence, in $L[g]$, $\bbC^{\Mantle}=\bbC^L$ is just the class of ordinals that are regular cardinals in $L$. This remains true in any further forcing extension of $L[g]$. Hence, it suffices to show that $\BSCFA(\bbC^L)$ holds in $L[g]$.

In fact, I will show that in general, $\BSCFA + \neg 0^\#$ implies $\BSCFA(\bbC^L)$.
To see this, assume $\BSCFA + \neg 0^\#$, and let $N=\V[g]$, where $g$ is generic for a subcomplete forcing $\P$. Let $a\in H_{\omega_2}$, and let $\phi(x)$ be a $\Sigma_1$-formula such that $\kla{H_{\omega_2}^N,\in,\bbC^L\cap\omega_2^N}\models\phi(a)$.

Note that if $\rho$ is a cardinal in $L$, then $\bbC^{L_\rho}=\bbC^L\cap\rho$. I'll use a trick I employed in \cite{Fuchs:HierarchiesOfForcingAxioms}, and which goes back to \Todorcevic \cite{Todorcevic:TreesAndLinearlyOrderedSets}, to express that an ordinal is regular in $L$ in a ``$\Sigma_1$ way'', using the canonical global $\square$-sequence $\vec{C}$ of $L$ from Jensen \cite{FS}. That is, \[\vec{C}=\seq{C_\alpha}{\alpha\ \text{is a singular ordinal in}\ L}\]
and for every $L$-singular $\alpha$, $C_\alpha\sub\alpha$ is club, $\otp(C_\alpha)<\alpha$, and if $\beta\in C'_\alpha$, then $\beta$ is singular in $L$ and $C_\beta=C_\alpha\cap\beta$. $\vec{C}$ is $\Sigma_1$-definable in $L$.

Note that every ordinal $\alpha\in(S^{\omega_3}_\omega)^N$ greater than $\omega_2^N$ is singular in $L$, since a subset of $\alpha$ of order type $\omega$ can be covered by a subset of $\alpha$ in $L$ that has order type less than $\omega_2^N$, using our assumption that $0^\#$ does not exist, by Jensen' covering lemma. So, by Fodor's theorem, there are a stationary $A_0\sub(S^{\omega_3}_\omega)^N$, and an ordinal $\alpha_0$ such that \[\forall\alpha\in A_0\quad\otp(C_\alpha)=\alpha_0.\]
Let $C$ be generic for the forcing $\P_{A_0}$ to shoot a club of order type $\omega_1$ through $A_0$, which is subcomplete, by \cite[Lemma 6.3]{Jensen2014:SubcompleteAndLForcingSingapore}. Let $D$ be generic over $N[C]$ for $\Col{\omega_1,\omega_3^N}$. Then, in $H_{\omega_2}^{N[C][D]}$, the following statement holds: ``there are a club $E$ of order type $\omega_1$, with supremum $\delta$, and a $\beta_0$, and a $Y$, and a $c$, such that for all $\alpha\in E$, $\otp(C_\alpha)=\beta_0$, such that $c=\bbC^{L_\delta}\cap Y$, $Y$ is a transitive set, $a\in Y$, and $\phi^{\kla{Y,\in,c}}(a)$'' holds. This is witnessed by $E=C$, $\delta=\omega_3^N$, $\beta_0=\alpha_0$, $Y=H_{\omega_2}^N$. This is a $\Sigma_1$-statement in the parameter $\omega_1^\V$. The point is that the map that sends $\gamma$ to $L_\gamma$ is $\Sigma_1$, and hence, the statement ``$c=\bbC^{L_\delta}$'' is expressible in a $\Sigma_1$ way as well.

Since $N[C][D]$ is a generic extension of $\V$ by a subcomplete forcing notion, the same statement is true in $H_{\omega_2}^\V$, by \BSCFA. Let $E,\delta,\beta_0,Y,c$ be as in the statement. It follows that $\delta$ is a regular cardinal in $L$, because if it weren't, then $C_\delta$ would be defined. $\delta$ has cofinality $\omega_1$, so it would follow that $C'_\delta\cap E$ is club in $\delta$, and for each $\alpha\in C'_\delta\cap E$, we would have that $\otp(C_\delta\cap\alpha)=\otp(C_\alpha)=\beta_0$. But there can be at most one such $\alpha$. This is a contradiction. Hence, $c=\bbC^{L_\delta}=\bbC^L\cap\delta$, and it follows that $\kla{Y,\in,\bbC^L\cap Y}\models\phi(a)$, and hence that $\kla{H_{\omega_2},\in,\bbC^L\cap\omega_2}\models\phi(a)$, because $Y$ is transitive and $\phi$ is $\Sigma_1$. Thus, $\kla{H_{\omega_2},\in,\bbC^L\cap\omega_2}\prec_{\Sigma_1}\kla{H_{\omega_2}^N,\in,\bbC^L\cap\omega_2^N}$, as desired.
\end{proof}

Let's now try to find an appropriate strengthening of the concept of a reflecting cardinal that allows us to force $\BSCFA(I)$, for adequate classes $I$ -- recall that for some classes, the resulting principle is inconsistent. The terminology adopted in the following definition is inspired by Bagaria \cite{Bagaria:C(n)-cardinals}. In that paper, for a natural number $n$, $C^{(n)}$ is defined to be the club class of ordinals $\alpha$ such that $\V_\alpha\prec_{\Sigma_n}\V$. It is pointed out there that if $\alpha\in C^{(1)}$, then $\alpha$ is an uncountable strong limit cardinal, and $\V_\alpha=H_\alpha$.

\begin{defn}
\label{defn:Sigma-n-reflectingCardinal}
An inaccessible cardinal \emph{$\kappa$ is $C^{(n)}$-reflecting} iff for every formula $\psi(x)$, every $a\in H_\kappa$, and every $C^{(n)}$-cardinal $\theta>\kappa$, if $\kla{H_\theta,\in}\models\psi(a)$, then there is a $C^{(n)}$-cardinal $\btheta<\kappa$ such that $a\in H_\btheta$ and $\kla{H_\btheta,\in}\models\psi(a)$.
\end{defn}

Note that an inaccessible cardinal above which there are no $\Cn$ cardinals is vacuously $\Cn$-reflecting. It would maybe have been more natural to require the existence of a proper class of $\Cn$ cardinals as part of the definition of $\Cn$-reflecting cardinals. In the application, this additional assumption will be made.

\begin{obs}
\label{obs:SmallForcingPreservesC(n)cardinals}
Let $\theta$ be a $C^{(n)}$-cardinal, let $\P\in H_\theta$ be a forcing notion, and let $G$ be $\P$-generic over $\V$. Then $\theta$ is a $C^{(n)}$-cardinal in $\V[G]$.
\end{obs}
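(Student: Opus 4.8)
The plan is to prove directly that $\V_\theta^{\V[G]}\prec_{\Sigma_n}\V[G]$, which is precisely the assertion that $\theta\in\Cn$ in $\V[G]$; along the way one checks that $\theta$ remains a (strong limit) cardinal there, so that it is still a $\Cn$-cardinal. Since $\theta\in\Cn$ entails $\theta\in C^{(1)}$, $\theta$ is a strong limit cardinal with $\V_\theta=H_\theta$, and $\P\in\V_\theta$ gives $|\P|<\theta$. A routine small-forcing computation then shows $\theta$ stays a strong limit cardinal in $\V[G]$ (for $\mu<\theta$ one has $(2^\mu)^{\V[G]}\le(2^{\mu\cdot|\P|})^\V<\theta$, and $\theta$ is a limit of preserved cardinals), whence $\V_\theta^{\V[G]}=H_\theta^{\V[G]}$.

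First I would establish the identification $\V_\theta^{\V[G]}=\V_\theta[G]$, i.e.\ that every $a\in\V_\theta^{\V[G]}$ has a $\P$-name $\dot a\in\V_\theta$. This is the usual nice-name argument: since $|\P|<\theta$ and $\theta$ is a strong limit, any set of hereditary cardinality $<\theta$ in $\V[G]$ admits a name of hereditary cardinality $<\theta$, hence a name in $H_\theta^\V=\V_\theta$. I would also record that $G$ is $\P$-generic over $\V_\theta$, since $\P\in\V_\theta$ and every dense subset of $\P$ lying in $\V_\theta$ belongs to $\V$.

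The heart of the argument is the interplay between the definability of forcing and the elementarity $\V_\theta\prec_{\Sigma_n}\V$. I would invoke the classical fact that for $n\ge1$ the $\Sigma_n$-forcing relation is itself $\Sigma_n$-definable: for a fixed $\Sigma_n$ formula $\phi$, the relation ``$p\Vdash_\P\phi(\dot a)$'' is $\Sigma_n$ in the parameters $p,\P,\dot a$. For the reflection direction, suppose $\phi$ is $\Sigma_n$, $a\in\V_\theta[G]$ has name $\dot a\in\V_\theta$, and $\V[G]\models\phi(a)$; by the forcing theorem pick $p\in G$ with $p\Vdash_\P\phi(\dot a)$. This is a true $\Sigma_n$ statement about parameters in $\V_\theta$, so $\V_\theta\prec_{\Sigma_n}\V$ transfers it down, and since the forcing relation for the fixed formula $\phi$ is computed identically in the transitive model $\V_\theta$ (which contains $\P$, $\dot a$, and all relevant conditions and names), $p$ forces $\phi(\dot a)$ over $\V_\theta$ as well; the forcing theorem inside $\V_\theta$ then yields $\V_\theta[G]\models\phi(a)$. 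The upward direction is symmetric: if $\V_\theta[G]\models\phi(a)$, pick $p\in G$ with $\V_\theta\models(p\Vdash_\P\phi(\dot a))$, a $\Sigma_n$ fact, push it up to $\V$ by elementarity, and conclude $\V[G]\models\phi(a)$ since $p\in G$.

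The main obstacle is the careful bookkeeping around the forcing relation: one must use the precise result that existential forcing, although it does not distribute over the quantifier, nonetheless keeps the $\Sigma_n$-forcing relation $\Sigma_n$ (equivalently $\Delta_n$ over the relevant theory), and one must ensure this defined relation is absolute between $\V_\theta$ and $\V$. Because all names, conditions, and the poset $\P$ lie in $\V_\theta$ and the recursion defining $\Vdash_\P$ for a fixed formula quantifies only over such objects, this absoluteness is routine, but it is the point that needs to be stated rather than assumed. (The case $n=0$ is immediate from transitivity, so one may take $n\ge1$ throughout.)
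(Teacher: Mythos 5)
Your proposal is correct and follows essentially the same route as the paper's proof: both reduce the claim to the fact that for a fixed $\Sigma_n$ formula $\phi$ the relation $p\forces_\P\phi(\dot a)$ is itself $\Sigma_n$ in $p,\P,\dot a$ (via the $\Delta_1$ forcing relation for atomic formulas), and then transfer it between $\V$ and $H_\theta=\V_\theta$ using $\V_\theta\prec_{\Sigma_n}\V$ together with $H_\theta^{\V[G]}=H_\theta[G]$. You supply somewhat more detail than the paper on the preservation of ``strong limit'' and the identification $\V_\theta^{\V[G]}=\V_\theta[G]$, but the argument is the same.
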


\begin{proof}
It is well-known that inaccessible cardinals are preserved by small forcing. Thus, for $n=0$, nothing has to be shown, so let $n\ge 1$.
Let $\phi(x)$ be a $\Sigma_n$-formula and let $a\in H_\theta^{\V[G]}$. Noting that $H_\theta^{\V[G]}=H_\theta[G]$, it has to be shown that
\[\kla{H_\theta[G],\in}\models\phi(a)\iff\kla{\V[G],\in}\models\phi(a)\]
Let $a=\dot{a}^G$, where $\dot{a}\in H_\theta$.

Let's assume that $\kla{\V[G],\in}\models\phi(a)$. Let $p\in G$ be such that $p\forces_\P\phi(\dot{a})$. Let $\phi'(p,\P,\dot{a})$ be a formula that expresses that $p\forces_\P\phi(\dot{a})$. Since $\phi$ is $\Sigma_n$, it follows that $\phi'$ is also $\Sigma_n$. Indeed, consulting any standard text on forcing, one can see that there is a $\Delta_1$ function $F$ such that for every atomic formula $\chi$ in the forcing language for $\P$ and every $p\in\P$, $F(p,\chi)=1$ if $p\forces_\P\chi$ and $F(p,\chi)=0$ otherwise. This can be used to see that the formula $\phi'$ described before is $\Sigma_n$. Since $\kla{\V,\in}\models\phi'(p,\P,\dot{a})$ and $p,\P,\dot{a}\in H_\theta$, it follows that $\kla{H_\theta,\in}\models\phi'(p,\P,\dot{a})$, which means that $\kla{H_\theta,\in}\models(p\forces_\P\phi(\dot{a}))$, so that $\kla{H_\theta[G],\in}\models\phi(a)$. The converse is proven analogously.
\end{proof}

\begin{obs}
\label{obs:SmallForcingPreservesSigma-n-ReflectingCardinals}
Suppose $\P\in H_\kappa$ is a notion of forcing, where $\kappa$ is $C^{(n)}$-reflecting. Let $G\sub\P$ be generic. Then in $\V[G]$, $\kappa$ is still $C^{(n)}$-reflecting.
\end{obs}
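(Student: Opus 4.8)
The plan is to follow the template by which small forcing is shown to preserve ordinary reflecting cardinals (Goldstern--Shelah), while overcoming the extra difficulty that ``being a $C^{(n)}$-cardinal'' is a genuine $\Sigma_n$-correctness requirement rather than mere regularity. So fix $\P\in H_\kappa$ and a $\P$-generic $G$. Since $\P$ is small, $\kappa$ stays inaccessible in $\V[G]$ and $H_\kappa^{\V[G]}=H_\kappa^\V[G]$. I will work under the standing assumption (flagged after Definition \ref{defn:Sigma-n-reflectingCardinal}) that there is a proper class of $C^{(n)}$-cardinals in $\V$. To verify that $\kappa$ is $C^{(n)}$-reflecting in $\V[G]$, let $\psi$ be a formula, $a\in H_\kappa^{\V[G]}$, and $\theta>\kappa$ a $C^{(n)}$-cardinal of $\V[G]$ with $H_\theta^{\V[G]}\models\psi(a)$; I must produce a $C^{(n)}$-cardinal $\bar\theta<\kappa$ of $\V[G]$ with $a\in H_{\bar\theta}^{\V[G]}$ and $H_{\bar\theta}^{\V[G]}\models\psi(a)$.

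The main obstacle is that $\theta$ need not be a $C^{(n)}$-cardinal of $\V$: Observation \ref{obs:SmallForcingPreservesC(n)cardinals} gives only the \emph{upward} preservation of $C^{(n)}$-cardinals under small forcing, and its method (the $\Sigma_n$-definability of the forcing relation) does not yield the downward direction, since the truth of a $\Sigma_n$ statement in the ground $\V$ is not captured by the forcing relation over $\P$. I would sidestep downward preservation entirely by reflecting an \emph{existential} statement anchored at a \emph{larger} $C^{(n)}$-cardinal of $\V$. Concretely, write $a=\dot a^G$ with $\dot a\in H_\kappa^\V$, and let $\sigma$ be the sentence ``there is a $C^{(n)}$-cardinal $\mu$ with $a\in H_\mu$ and $H_\mu\models\psi(a)$'', which holds in $\V[G]$, as witnessed by $\theta$. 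Fix $p\in G$ forcing $\sigma$. Using the proper-class assumption, choose a $C^{(n)}$-cardinal $\eta$ of $\V$ with $\eta>\theta$ and $p,\P,\dot a\in H_\eta$; by Observation \ref{obs:SmallForcingPreservesC(n)cardinals}, $\eta$ is also a $C^{(n)}$-cardinal of $\V[G]$.

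Next I would check that $H_\eta^{\V[G]}\models\sigma$. Since $\theta,\eta$ are $C^{(n)}$-cardinals of $\V[G]$ with $\theta<\eta$, we have $H_\theta^{\V[G]}\prec_{\Sigma_n}H_\eta^{\V[G]}\prec_{\Sigma_n}\V[G]$, so inside $H_\eta^{\V[G]}$ the ordinal $\theta$ is recognized as a $C^{(n)}$-cardinal (its internal meaning, $\prec_{\Sigma_n}H_\eta^{\V[G]}$, agreeing with the global one for arguments below $\eta$), and it witnesses $\sigma$ there. Because $H_\eta^{\V[G]}=H_\eta^\V[G]$ and $p\in G$, the forcing theorem over the set model $H_\eta^\V$ gives $H_\eta^\V\models\Psi$, where $\Psi(p,\P,\dot a)$ is the formula ``$p\forces_\P\sigma$'' computed over $H_\eta^\V$; its syntactic complexity is irrelevant, since Definition \ref{defn:Sigma-n-reflectingCardinal} permits reflecting an arbitrary formula. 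Now I apply the $C^{(n)}$-reflectingness of $\kappa$ in $\V$ to $\Psi$, with parameters $p,\P,\dot a\in H_\kappa$ and the witnessing $C^{(n)}$-cardinal $\eta>\kappa$: this yields a $C^{(n)}$-cardinal $\bar\eta<\kappa$ of $\V$ with $p,\P,\dot a\in H_{\bar\eta}^\V$ and $H_{\bar\eta}^\V\models\Psi$, that is, $p\forces_\P^{H_{\bar\eta}^\V}\sigma$.

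Finally I would unwind this downstairs. Since $p\in G$, we get $H_{\bar\eta}^{\V[G]}=H_{\bar\eta}^\V[G]\models\sigma$, and by Observation \ref{obs:SmallForcingPreservesC(n)cardinals} again $\bar\eta$ is a $C^{(n)}$-cardinal of $\V[G]$. The witness $\mu<\bar\eta$ provided by $\sigma$ inside $H_{\bar\eta}^{\V[G]}$ is then a $C^{(n)}$-cardinal of $\V[G]$ (its internal meaning matching the global one, because $\bar\eta$ is a $C^{(n)}$-cardinal of $\V[G]$ and $\mu<\bar\eta$), satisfies $a\in H_\mu^{\V[G]}$, and has $H_\mu^{\V[G]}\models\psi(a)$. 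Setting $\bar\theta:=\mu<\bar\eta<\kappa$ gives exactly the required witness, completing the verification. The one delicate point to write out carefully is the equivalence, for ordinals below a $C^{(n)}$-cardinal, between the internal and global meanings of ``$C^{(n)}$-cardinal'', which rests on the nesting $H_\mu\prec_{\Sigma_n}H_\eta\prec_{\Sigma_n}\V[G]$ and the $\Sigma_n$-satisfaction predicate available in these inaccessible $H$'s.
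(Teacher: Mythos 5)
Your proof is correct, but it takes a genuinely different route from the paper's, and in doing so it addresses a point that the paper's own proof passes over. The paper argues directly: it takes the given $\Cn$-cardinal $\theta$ of $\V[G]$ with $\kla{H_\theta,\in}^{\V[G]}\models\psi(a)$, finds $p\in G$ with $\kla{H_\theta,\in}^\V\models\anf{p\forces_\P\psi(\dot{a})}$, and then immediately applies the $\Cn$-reflectingness of $\kappa$ \emph{in $\V$} with $\theta$ as the witnessing cardinal. That step tacitly uses that $\theta$ is a $\Cn$-cardinal of $\V$, i.e., the \emph{downward} preservation of $\Cn$-ness under small forcing, which Observation \ref{obs:SmallForcingPreservesC(n)cardinals} does not supply (it proves only the upward direction; the downward direction is true but requires a separate argument). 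You identify exactly this obstacle and route around it: you wrap $\psi$ in the existential statement $\sigma$, anchor the reflection at a genuine $\Cn$-cardinal $\eta$ of $\V$ above $\theta$ (which Observation \ref{obs:SmallForcingPreservesC(n)cardinals} does allow you to carry up to $\V[G]$), reflect $\anf{p\forces_\P\sigma}$ below $\kappa$, and extract the witness $\mu=\btheta$ from the reflected statement. The price is the standing assumption of a proper class of $\Cn$-cardinals (harmless: the paper assumes it in its applications, and $\Cn$ is in any case a club proper class by L\'evy--Montague reflection) together with the absoluteness of $\anf{\mu\ \text{is a}\ \Cn\text{-cardinal}}$ between $H_\eta$ and $\V[G]$ for $\mu<\eta\in\Cn$ --- which holds because this predicate is $\Pi_n$ and hence transfers both ways across a $\Sigma_n$-elementary $H_\eta$, as you note at the end; that verification should indeed be written out. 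What your version buys is that it relies only on the upward preservation actually proved in the paper, whereas the paper's shorter argument would need the downward preservation of $\Cn$-cardinals under small forcing to be supplied separately.
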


\begin{proof}
Working in $\V[G]$, let $\psi(x)$, $a\in H_\kappa^{\V[G]}$ and $\theta$ be as in the Definition \ref{defn:Sigma-n-reflectingCardinal}. Let $\dot{a}\in H_\kappa^\V$ be such that $a=\dot{a}^G$. Let $\psi^*(q,\P,\dot{a})$ be the statement expressing, for $q\in\P$, that $q$ forces with respect to $\P$ that $\psi(\dot{a})$ holds. Clearly, there is a $p\in G$ such that $\kla{H_{\theta},\in}^\V\models\psi^*(p,\P,\dot{a})$. Since $\kappa$ is \Cn-reflecting in $\V$, there is then (in $\V$) a \Cn-cardinal $\btheta<\kappa$ such that $\kla{H_{\btheta},\in}^\V\models\psi(p,\P,\dot{a})$, and in particular, $p,\P,\dot{a}\in H_{\btheta}^\V$. It follows by Observation \ref{obs:SmallForcingPreservesC(n)cardinals} that $\btheta$ is a \Cn-cardinal in $\V[G]$, and since $H_{\btheta}^{\V[G]}=H_{\btheta}[G]$, it follows further that $\kla{H_{\btheta},\in}^{\V[G]}\models\psi(a)$.

\end{proof}

\begin{lem}
\label{lem:ForcingEnhancedBFAfromCnReflecting}
Let $I$ be a class term that's $\Delta_{n+1}$ (with respect to a fragment of $\ZFC$ that holds in every model of the form $H_\theta$, where $\theta$ is an uncountable cardinal), possibly involving parameters from $H_\kappa$, such that subcomplete-necessarily, $I\cap\HSC$ is immune to subcomplete forcing, meaning that for any subcomplete forcing $\P$, if $G$ is $\P$-generic over $\V$, then $I\cap\HSC=I^{\V[G]}\cap\HSC^\V$, and this remains true in every set-forcing extension by a subcomplete forcing. Suppose that $\kappa$ is a \Cn-reflecting cardinal, and suppose that there is a proper class of \Cn-cardinals. Then there is a subcomplete forcing $\P$ which forces $\BSCFA(I)$.
\end{lem}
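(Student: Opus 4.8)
The plan is to use the forcing that establishes the consistency of \BSCFA{} from a reflecting cardinal in \cite{Fuchs:HierarchiesOfForcingAxioms}, adapted to the enhanced setting. So I would let $\P=\P_\kappa$ be a revised countable support iteration $\kla{\P_\beta,\dot\Q_\beta}$, $\beta<\kappa$, of subcomplete forcing notions of size ${<}\kappa$, with a guessing/bookkeeping function attached to the $\Cn$-reflecting cardinal $\kappa$, so that $\P$ is subcomplete (by Jensen's iteration theorem), $\kappa$-c.c., and collapses $\kappa$ to become $\omega_2$ in $\V[G]$. The only new ingredient compared to the proof that this forcing yields \BSCFA{} is the treatment of the predicate $I$, and this is exactly what forces the move from plain reflecting cardinals to $\Cn$-reflecting ones. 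The key preliminary observation is that for every $\Cn$-cardinal $\theta$ the structure $\kla{H_\theta,\in}$ computes $I$ correctly, i.e.\ $I^{H_\theta}=I\cap H_\theta$: since $\theta\in\Cn$ gives $H_\theta\prec_{\Sigma_n}\V$ and $I$ is $\Delta_{n+1}$ over a fragment holding in every $H_\theta$, a $\Pi_{n+1}$ definition of $I$ is downward absolute and a $\Sigma_{n+1}$ definition is upward absolute to $H_\theta$, so both agree with the computation in $\V$. By Observation \ref{obs:SmallForcingPreservesC(n)cardinals}, this correctness, together with the existence of a proper class of $\Cn$-cardinals, survives into each $\V[G_\beta]$ (as $\P_\beta\in H_\kappa$), and by Observation \ref{obs:SmallForcingPreservesSigma-n-ReflectingCardinals}, $\kappa$ remains $\Cn$-reflecting in every $\V[G_\beta]$.

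To see that $\V[G]\models\BSCFA(I)$, by Fact \ref{fact:BFA(I)CharacterizationForNaturalGamma} it suffices to establish, for every subcomplete $\Q\in\V[G]$ with generic $H$, that $\kla{\HSC,\in,I\cap\HSC}^{\V[G]}\prec_{\Sigma_1}\kla{\HSC^{\V[G][H]},\in,I^{\V[G][H]}\cap\HSC^{\V[G][H]}}$. So suppose $\phi(x)$ is $\Sigma_1$ in the language with the predicate $\dot I$, $a\in\HSC^{\V[G]}$, and $\Q$ forces $\phi(a)$ over the enhanced $\HSC$. Fix $\alpha<\kappa$ with $a,\Q\in\V[G_\alpha]$. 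For each $\gamma\in[\alpha,\kappa)$ the two-step iteration $\P_{[\gamma,\kappa)}*\dot\Q$ is subcomplete over $\V[G_\gamma]$ and forces the enhanced statement $\phi(a)$; hence the assertion ``there is a subcomplete forcing forcing $\kla{\HSC,\in,\dot I\cap\HSC}\models\phi(a)$'' holds in $\V[G_\gamma]$. Choosing a $\Cn$-cardinal $\theta>\kappa$ of $\V[G_\gamma]$ large enough to contain such a witness, the assertion holds in $H_\theta^{\V[G_\gamma]}$, where $I$ is computed correctly; since $\kappa$ is $\Cn$-reflecting in $\V[G_\gamma]$ and $a$ together with the parameters of $I$ lie in $H_\kappa^{\V[G_\gamma]}$, it reflects to a $\Cn$-cardinal $\btheta<\kappa$, yielding a subcomplete poset $\bar{\mathbb S}\in H_{\btheta}^{\V[G_\gamma]}$ of size ${<}\kappa$ that forces the enhanced $\phi(a)$ over $\V[G_\gamma]$. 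Both the subcompleteness of $\bar{\mathbb S}$ and its forcing of the enhanced statement are evaluated correctly here, because $\btheta\in\Cn$ is preserved by $\bar{\mathbb S}$ (Observation \ref{obs:SmallForcingPreservesC(n)cardinals}), so $I$ stays correctly computed in the relevant generic extensions.

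Now I would arrange, exactly as in the consistency proof for \BSCFA, that the guessing function attached to $\kappa$ anticipates, at some stage $\gamma\in[\alpha,\kappa)$, a name for such a witness $\bar{\mathbb S}$ computed over $\V[G_\gamma]$, and set $\dot\Q_\gamma=\bar{\mathbb S}$. Forcing with it at stage $\gamma$ then makes the enhanced $\phi(a)$ true in $\V[G_{\gamma+1}]$. Finally, $\P_{[\gamma+1,\kappa)}$ is a genuine tail of the iteration and hence subcomplete over $\V[G_{\gamma+1}]$, which is itself a subcomplete extension of $\V$; thus immunity of $I$ holds in $\V[G_{\gamma+1}]$ and gives $I^{\V[G_{\gamma+1}]}\cap\HSC^{\V[G_{\gamma+1}]}=I^{\V[G]}\cap\HSC^{\V[G_{\gamma+1}]}$. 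Since $\phi$ is $\Sigma_1$ and $\HSC^{\V[G_{\gamma+1}]}$ is a transitive part of $\HSC^{\V[G]}$ on which the predicate agrees, the enhanced $\phi(a)$ persists upward to $\V[G]$, as required.

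The crux, and the reason for performing the reflection over the current model $\V[G_\gamma]$ rather than over $\V[G_\alpha]$, is that a $\Sigma_1$ fact forced by a poset over one intermediate model need not be forced by that same poset over a later one; reflecting at the very stage $\gamma$ at which the witness is used sidesteps this transfer problem, but requires the bookkeeping to be of the Laver/guessing type tied to $\kappa$, which is the delicate (though by now standard) part inherited from the \BSCFA{} consistency argument. The genuinely new difficulty is keeping $I$ under control throughout: one must verify that the reflection can be confined to $\Cn$-cardinals, so that $I$, the subcompleteness of the witnessing poset, and the forcing relation for the enhanced $\Sigma_1$ statement are all computed correctly in the reflecting structures. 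This is precisely where the $\Delta_{n+1}$-complexity of $I$ forces the use of a $\Cn$-reflecting cardinal together with a proper class of $\Cn$-cardinals, in place of a plain reflecting cardinal.
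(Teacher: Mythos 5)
Your proposal is correct and follows essentially the same route as the paper: an RCS iteration of subcomplete forcings of length $\kappa$, the key observation that a $\Delta_{n+1}$ class term is computed correctly in $H_\theta$ for \Cn-cardinals $\theta$ (so that \Cn-reflection, preserved by the small initial segments, bounds a witnessing subcomplete forcing below $\kappa$), and upward persistence of the enhanced $\Sigma_1$ statement via the immunity of $I$ and the subcompleteness of the tail. The only divergence is mechanical: where you invoke a Laver-style guessing/bookkeeping function, the paper instead takes at each stage the lottery sum of all subcomplete forcings whose subcompleteness is verified by $\sup_{\phi,a}\theta_{\phi,a}<\kappa$ (its Claim (1)) and lets density do the anticipating, which avoids having to justify the existence of a suitable guessing function for a \Cn-reflecting cardinal; the two devices are interchangeable here.
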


\begin{proof}
Let $I$ be $\Delta_{n+1}$ in the parameters $\vec{b}\in H_\kappa$.
I will construct an RCS iteration of subcomplete forcing notions of length $\kappa$. Following the usual setup, this amounts to constructing sequences $\seq{\P_\alpha}{\alpha\le\kappa}$ and $\seq{\dot{\Q}_\alpha}{\alpha<\kappa}$ such that $\P_{\alpha+1}=\P_\alpha*\dot{\Q}_\alpha$, and $\P_\lambda$ is the RCS limit of the construction up to $\lambda$, for limit $\lambda$. So, assuming $\P_\alpha$ is defined, where $\alpha$ is an ordinal less than $\kappa$, it suffices to define $\dot{\Q}_\alpha$, and thus $\P_{\alpha+1}$. To this end, suppose $G_\alpha$ is $\P_\alpha$-generic over $\V$. Inductively, we will have that $\P_\alpha\in\V_\kappa$, and that for all $\beta<\alpha$, $\P_\beta$ is subcomplete and $\forces_{\P_\beta}$``$\dot{\Q}_\beta$ is subcomplete,'' and $\P_\beta$ has size $\omega_1$ in $\V^{\P_{\beta+1}}$.

In $\V[G_\alpha]$, let, for every $\Sigma_1$-formula $\phi=\phi(x)$ in the language of set theory with an additional unary predicate symbol $\dot{I}$, and for every $a\in H_{\omega_2}^{\V[G_\alpha]}$, $\theta_{\phi,a}$ be the least $\theta$ such that there is a subcomplete forcing $\P$ whose subcompleteness is verified by $\theta$, and which is such that the statement $\psi_\phi(\P,a,\vec{b})$ holds, expressing that $\kla{H_{\omega_2},\in,I\cap H_{\omega_2}}^{{\V[G_\alpha]}^\P}\models\phi(a)$, if there is such a $\P$.
If there is no such $\P$ at all, then let $\theta_{\phi,a}=0$.

Let $\Q_\alpha\in\V[G_\alpha]$ be the lottery sum of all subcomplete forcing notions whose subcompleteness is verified by $\sup_{\phi,a}\theta_{\phi,a}$, followed by $\Col{\omega_1,\P_\alpha}$, and let $\dot{\Q}_\alpha$ be a $\P_\alpha$-name of minimal rank such that $\eins_{\P_\alpha}$ forces that $\dot{\Q}_\alpha$ satisfies the definition given. Adopting terminology popularized by Hamkins, by the lottery sum of a collection of forcing notions, I mean the disjoint union of the posets in the collection, with a common weakest condition above all the conditions in the disjoint union. Thus, effectively, forcing with this sum amounts to choosing one of the posets in the collection and forcing with it. It was shown in \cite{Kaethe:Diss} that the lottery sum of a collection of subcomplete forcing notions is subcomplete.

\claim{(1)}{$\theta_{\phi,a}<\kappa$.}

\begin{proof}[Proof of (1)]

Work in $\V[G_\alpha]$, where $\kappa$ is \Cn-reflecting, by Observation \ref{obs:SmallForcingPreservesSigma-n-ReflectingCardinals}. Fix a $\Sigma_1$-formula $\phi(x)$ and a set $a\in\HSC^{\V[G_\alpha]}$ such that in $\V[G_\alpha]$, $\psi_\phi(\P,a,\vec{b})$ holds, for some subcomplete forcing $\P$ in $\V[G_\alpha]$ (if there is no such $\P$, then $\theta_{\phi,a}=0$, and there is nothing to show). Note that $\omega_2^{\V[G_\alpha]}<\kappa$, so that $a\in\V_\kappa^{\V[G_\alpha]}$. By assumption, $\vec{b}\in\V_\kappa\sub\V_\kappa^{\V[G_\alpha]}$ as well.

Let $\theta$ be a \Cn-cardinal such that in $\kla{H_\theta^{\V[G_\alpha]},\in}$, the following statement $\psi(a,\vec{b})$ holds: ``there are a forcing notion $\P'$, a regular cardinal $\tau$ and a set $\dot{A}$ such that $\power(H_\tau)$ exists, $\tau$ verifies the subcompleteness of $\P'$, $\dot{A}=\{\kla{\nu,q}\in H_\tau\st q\in\P'\ \text{and}\ q\forces_{\P'}\anf{\nu\in I}\}$, and
$\forces_{\P'}\kla{H_{\omega_2},\in,\dot{A}\cap H_{\omega_2}}\models\phi(\check{a})$.'' It is easy to see that such a $\theta$ exists, because, working in $W=\V[G_\alpha]$, if $\P'$ is a forcing notion whose subcompleteness is verified by some regular $\tau>\omega_2$, and is such that $\psi_\phi(\P',a,\vec{b})$ holds, then if $\theta'$ is any regular cardinal with $H_\tau\in H_\theta$, it will be true in $H_\theta$ that the subcompleteness of $\P'$ is verified by $\tau$ (see \cite[Lemma 2.1]{Jensen2014:SubcompleteAndLForcingSingapore}). Moreover, fixing $\P'$, if $\theta$ is in addition chosen to be a \Cn cardinal, then it follows that $I\cap H_\theta=I^{H_\theta}$, and similarly, $\dot{A}\cap H_\theta=\dot{A}^{H_\theta}$ (still working in $W$). This is because $\dot{A}$ can be defined in $\kla{H_\btheta,\in}$ both by a $\Sigma_{n+1}$- and a $\Pi_{n+1}$-formula. In more detail, let $\pi(x,\vec{b})$ be a $\Pi_{n+1}$ definition of $I$, and let $\sigma(x,\vec{b})$ be a $\Sigma_{n+1}$ definition of $I$. The equivalence between these formulas is assumed to be provable in a fragment of \ZFC that holds in every set of the form $H_\xi$, for any uncountable cardinal $\xi$. It follows then that when $H_\xi\prec_{\Sigma_n}W$, we have that $I^{\kla{H_\xi,\in}}=I^\V\cap H_\xi$, because if $c\in I^{\kla{H_\xi,\in}}$, then this means that $\kla{H_\xi,\in}\models\sigma(c,\vec{b})$, which implies that $\sigma(c,\vec{b})$ holds, because since $H_\xi\prec_{\Sigma_n}\V$, $\Sigma_{n+1}$-formulas go up. Vice versa, if $c\in I^\V\cap H_\xi$, then this means that $\pi(c,\vec{b})$ holds, and this implies that $\pi(c,\vec{b})$ holds in $\kla{H_\xi,\in}$ as well, since $\Pi_{n+1}$ formulas go down. The same argument can be carried with $\dot{A}$ in place of $I$, since ``$q\forces_{\P'}\nu\in I$'' can be expressed by ``$q\forces_{\P'}\sigma(\nu)$'' (which amounts to a $\Sigma_{n+1}$-formula) or ``$q\forces_{\P'}\pi(\nu)$'' (which amounts to a $\Pi_{n+1}$-formula).
Since there are arbitrarily large \Cn cardinals in $\V$, the same is true in $W$, and as a result, $\theta$ can be chosen so that  $\psi(a,\vec{b})$ holds in $\kla{H_\theta^{\V[G_\alpha]},\in}$.

Let $\btheta<\kappa$ be a \Cn-cardinal in $\V[G_\alpha]$ with $a,\vec{b}\in H_\btheta$ such that $\kla{H_\btheta,\in}\models\psi(a,\vec{b})$. Let $\bar{\P},\bar{\tau},\dot{\bar{A}}$ witness this.

It follows that $\btau$ really verifies the subcompleteness of $\bar{\P}$, and that
\[\forces_{\bar{\P}}\anf{\kla{H_{\omega_2},\in,I\cap H_{\omega_2}}\models\phi(\check{a})}.\]
Thus, $\theta_{\phi,a}\le\btau<\kappa$, as claimed.
\end{proof}

Since $\kappa$ is regular in $\V[G_\alpha]$, it follows that $\sup_{\phi,a}\theta_{\phi,a}<\kappa$, and hence that $\P_{\alpha+1}\in\V_\kappa$. This defines the iteration. Let $\P_\kappa$ be its RCS-limit.

Let $G$ be generic for $\P=\P_\kappa$. Standard arguments show that $\P$ is $\kappa$-cc, and as a consequence, it follows that $\kappa=\omega_2^{\V[G]}$. Let $\phi=\phi(a)$ be a $\Sigma_1$-formula in the parameter $a\in H_{\omega_2}^{\V[G]}$, in the language of set theory with an extra unary predicate symbol $\dot{I}$, and suppose that there is a subcomplete forcing $\Q=\dot{\Q}^G$ such that if $H$ is $\Q$-generic over $\V[G]$, then
$\kla{H_{\omega_2},\in,I\cap H_{\omega_2}}^{\V[G][H]}\models\phi(a)$. Let $a\in H_{\omega_2}^{\V[G_\alpha]}$, and let $p\in G$ force that $\dot{\Q}$ is as described.

Since $\kappa$ is still \Cn-reflecting in $\V[G_\alpha]$, we are in the same situation in $\V[G_\alpha]$ as we are in $\V$, so let's assume that $a\in H_{\omega_2}^\V$. There is then a subcomplete forcing that forces that in the structure $\kla{H_{\omega_2},\in,I\cap H_{\omega_2}}$ (in the sense of the forcing extension), $\phi(a)$ holds, namely, by the naturalness of the class of subcomplete forcing notions, there is a subcomplete forcing notion $\reals$ that's equivalent to $(\P_\kappa)_{{\le}p}*\dot{\Q}$. Pick a \Cn-cardinal $\theta>\kappa$ such that $\reals\in H_\theta$, and such that $H_\tau\in H_{\theta}$, where $\tau$ verifies the subcompleteness of $\reals$. Let $\psi(a,\vec{b})$ be the statement of the proof of (1). Then $\psi(a,\vec{b})$ holds in $\kla{H_\theta,\in}$. Now let $\btheta<\kappa$ be a \Cn-cardinal such that $a,\vec{b}\in H_\btheta$, and $\kla{H_\btheta,\in}\models\psi(a,\vec{b})$. Let $\btau,\bar{\P}$ witness this. Then $\bar{\P}$ is indeed subcomplete, and this is verified by $\btau$, and by the $\Sigma_n$-correctness of $\btheta$, it follows that $\bar{\P}$ actually forces that $\phi(a)$ holds in $\kla{H_{\omega_2},\in,\dot{I}\cap H_{\omega_2}}$. So there is a forcing notion in the lottery sum at stage $0$ of the iteration which will make $\phi(a)$ true in $\kla{\HSC,\in,\dot{I}\cap H_{\omega_2}}$, and hence it is dense that such a forcing notion was chosen. Once $\phi(a)$ is true in some $\kla{\HSC,\in,I\cap\HSC^{\V[G_\alpha]}}$, it persists to $\kla{\HSC,\in,I\cap\HSC^{\V[G]}}$, since $I\cap\HSC$ is subcomplete-necessarily immune to subcomplete forcing, and since $\phi$ is $\Sigma_1$. Thus, since $\phi$ and $a$ were chosen arbitrarily, we have shown that the condition stated in Definition \ref{defn:BFA(I)} holds in $\V[G]$.
\end{proof}


\end{document}